\newcolumntype{C}{>{$\displaystyle} c <{$}}
\DeclareMathOperator*{\argmin}{\arg\min}
\DeclareMathOperator{\co}{conv}
\DeclareMathOperator{\proj}{Proj}
\DeclareMathOperator{\relint}{rel.int}
\DeclareMathAlphabet{\mathpzc}{OT1}{pzc}{m}{it}
\DeclareMathOperator{\sign}{sgn}
\newcommand{\concf}[1][\f]{{#1}^{\mathrm{conc}}}
\newcommand{\cvxf}[1][\f]{{#1}_{\B}^{\mathrm{cvx}}}
\newcommand{\concenv}[2]{\operatorname{cav}_{#2}[#1]}
\newcommand{\cvxenv}[2]{\operatorname{vex}_{#2}[#1]}
\newcommand{\x}[1]{#1}
\newcommand{\bx}{\x{x}}
\newcommand{\real}{\mathbb{R}}
\newcommand{\nonnegreal}[1][]{\real^{#1}_{+}}
\newcommand{\posint}{\mathbb{Z}_{\ge 1}}
\newcommand{\power}{\alpha}
\newcommand{\bpower}{\x{\power}}
\renewcommand{\P}[1][]{P}
\renewcommand{\S}{S}
\newcommand{\conv}[1]{\co #1}
\newcommand{\err}[1]{\mu\left(#1\right)}
\newcommand{\degree}[1][]{%
\ifthenelse{\isempty{#1}}%
{d}
{\abs{#1}}
}
\newcommand{\erropt}[1][]{
\ifthenelse{\isempty{#1}}%
{\mathscr{C}^{1}_{\degree}}
{\mathscr{C}(#1)}
}
\newcommand{\erroptcvx}{\mathscr{C}^{2}_{\degree}}
\newcommand{\E}{\mathscr{E}_{r,n}}
\newcommand{\D}{\mathscr{D}_{r,n}}
\newcommand{\zerovec}{\mathbf{0}}
\newcommand{\onevec}[1][]{
\ifthenelse{\isempty{#1}}%
{\mathds{1}}
{\mathbf{e}_{#1}}
}
\newcommand{\multilin}[1][x]{\ensuremath{\prod_{j=1}^{n}{#1}_{j}}} 
\newcommandx{\monom}[2][1=\bpower,2=\bx,usedefault]{{#2}^{#1}}
\newcommandx{\monomexp}[2][1=\power,2=x,usedefault]{\prod_{j=1}^{n}{#2}_{j}^{{#1}_{j}}}
\DeclareMathOperator{\graphing}{\mathcal{G}}
\newcommandx{\graph}[2][1=,2=,usedefault]{
\ifthenelse{\isempty{#2}}%
{\graphing_{#1}(\f[m])}
{\graphing_{#1}(#2)}
}
\newcommandx{\graphmonom}[2][1=\S,2=\f,usedefault]{\graph[#1][#2]}
\newcommand{\f}[1][f]{\mathpzc{#1}}
\newcommand{\g}{\f[g]}
\newcommand{\fmin}[1][\S]{\f^{\min}_{#1}}
\newcommand{\fmax}[1][\S]{\f^{\max}_{#1}}
\newcommand{\linear}[1][\beta]{\ell_{#1}}
\renewcommand{\L}{l}
\newcommand{\U}{u}
\renewcommandx{\H}[2][1=\L,2=\U,usedefault]{[\x{#1},\x{#2}]}
\newcommandx{\Hzone}[3][1=0,2=1,3=n,usedefault]{[#1,#2]^{#3}}
\renewcommand{\c}{c}
\newcommand{\Honer}[1][n]{\Hzone[1][r][#1]} 
\newcommandx{\Hnegoo}[2][1=n,2=1,usedefault]{\Hzone[-#2][#2][#1]} 
\newcommand{\scaledbox}{[\x{\c}\x{\L},\x{\c}\x{\U}]}
\newcommand{\Deltaone}[1][]{
\ifthenelse{\isempty{#1}}%
{\Delta^{\onevec}_{n}}
{\Delta^{\onevec}_{n}(#1)}
}
\newcommandx{\errconv}[2][1=\S,2=\f,usedefault]{\err{\conv{\graph[#1][#2]}}}
\newcommandx{\errvex}[2][1=\S,2=\f,usedefault]{\err{\graph[][\cvxenv{#2}{#1}]}}
\newcommandx{\errcav}[2][1=\S,2=\f,usedefault]{\err{\graph[][\concenv{#2}{#1}]}}
\newcommand{\zopt}[1][\S]{z^{\ast}_{#1}}
\newcommand{\zmon}[1][\S]{z^{mono}_{#1}}
\newtheorem{theorem}{Theorem}[section]
\newtheorem{lemma}{Lemma}[section]
\newtheorem{proposition}{Proposition}[section]
\newtheorem{corollary}{Corollary}[section]
\newtheorem{claim}{Claim}[section]
\newtheorem{observation}{Observation}[section]
\theoremstyle{remark}
\newtheorem{remark}{Remark}
\theoremstyle{definition}
\newcommand{\ouraffil}{
\emph{\hspace{-0.7cm} Department of Mathematical Sciences, Clemson University}\\
\emph{Email address}: \texttt{\{wadams,agupte,yibox\}@clemson.edu}\\
}
\newcommand{\blue}[1]{#1}
\title{Error bounds for monomial convexification in \linebreak polynomial optimization}
\author{Warren Adams}
\author{Akshay Gupte}
\author{Yibo Xu}
\affil{\footnote{\ouraffil}}
\date{November 23, 2017}
\begin{document}

{
\renewcommand{\footnotemark}{}
\maketitle
}

\begin{abstract}
Convex hulls of monomials have been widely studied in the literature, and monomial convexifications are  implemented in global optimization software for relaxing polynomials. However, there has been no study of the error in the global optimum from such approaches. We give bounds on the worst-case error for convexifying a monomial over subsets of $\Hzone$. This implies additive error bounds for relaxing a polynomial optimization problem by convexifying each monomial separately. Our main error bounds  depend primarily on the degree of the monomial, making them easy to compute. Since monomial convexification studies depend on the bounds on the associated variables, in the second part, we conduct an error analysis for a multilinear monomial over two different types of box constraints. As part of this analysis, we also derive the convex hull of a multilinear monomial over $\Hnegoo$.
\medskip\\
\noindent\textbf{Keywords.} Polynomial optimization, Monomial, Multilinear, Convex hull, Error analysis, Means inequality \smallskip

\noindent \textbf{AMS subject classification.} 90C26, 65G99, 52A27
\end{abstract}

\section{Introduction}\label{sec:intro}
A polynomial $p\in\real[x]$, where $\real[x]=\real[x_{1},\dots,x_{n}]$ is the ring of $n$-variate polynomials, is a linear combination of monomials and is expressed as  $p(x) = \sum_{\power}c_{\power}\monom$ where the sum is finite, $\monom := \prod_{j=1}^{n}x_{j}^{\power_{j}}$ is a monomial,  and every $\power_{j}$ is a nonnegative  integer. A polynomial optimization problem is 
\[\zopt = \min\,\{p(x)\mid x\in\S\}\] for a compact convex set $\S$ and $p \in \real[x]$. It is common to assume that the degree of the polynomial is bounded by some constant $m$ and this is denoted by $p\in\real[x]_{m}$. Polynomials, in general, are nonconvex functions, thereby necessitating the use of  global optimization algorithms for optimizing them. Strong and efficiently computable convex relaxations are a major component of these algorithms, making them a subject of ongoing research. 
One approach for devising good relaxations is based on taking the convex envelope of each polynomial $p(x)$ over $\S$. However, since this computation is NP-hard even in the most basic cases having $m=2$ and $\S=\Hzone$ or $\S$ being a  standard simplex, 
a main emphasis of the envelope studies has been on finding the envelope either under  structural assumptions on $\S$ or by considering only a subset of all the monomials appearing in $p(x)$. Also, one is interested in obtaining polyhedral relaxations of the envelope so that lower bounds can be computed cheaply by solving linear programs (LPs) iteratively \citep{locatelli2014convex,meyer2005convex,tawarmalani2013explicit,sherali2012poly}. If $p(x)$ is a multilinear polynomial (i.e. $\power_{j}\in\{0,1\}$ for all $j$) and $\S$ is a box, then the envelopes are polyhedral and we know exponential sized extended formulations \citep{rikun1997convex,sherali1997convex}, as well as valid inequalities \citep{del2016polyhedral,crama2017class} and  efficient cutting planes \citep{misener2015dynamically,bao2015global} in projected spaces.  A second method for obtaining lower bounds on the polynomial optimization problem has been to use the moments approach and \citet{lasserre2001global} hierarchy of semidefinite relaxations (SDPs) that converges to the global optimum \citep{lasserre2015introduction,laurent2009sums}. All of these techniques can of course also be used for relaxing a optimization problem that has polynomials in both the objective and constraints.

For a general polynomial $p(x)=\sum_{\power}c_{\power}\monom$, given  that it is hard to find the envelope explicitly and that computability of the SDP bounds does not scale well, a common relaxation technique, motivated by the classical work of \citet{mccormick1976computability}, has been to replace each monomial $\monom$  with a continuous variable, say $w$, and then add inequalities to convexify the graph of $\monom$ over $\S$, which is the set $\{(x,w)\in\S\times\real\mid w = \monom \}$. This is referred to as monomial convexification, and it typically yields a weaker relaxation than the envelope of the polynomial due to the fact that the envelope operator does not distribute over sums in general. However, because they may be cheaper and easier to generate than convexification of the entire polynomial, convex hulls of monomials have received significant attention \citep{bao2015global,buchheim2016monomial,liberti2003convex,couenne} and are also routinely implemented in leading global optimization software \citep{dalkiran2016rlt,misener2014antigone,tawarmalani2005polyhedral}. We still do not know an explicit form for the convex hull of a general monomial, but a number of  results are available for bivariate monomials \citep{locatelli2016polyhedral} and $n$-variate multilinear monomials \citep{belotti2010valid,al1983jointly,benson2004concave,crama1993concave,mahdi2010coloring,meyer2004trilinear,ryoo2001analysis}. Moreover, there also exist challenging applications \citep{buchheim2010integer} where the constraints can be formulated as having only monomial terms, thereby making monomial convexifications necessary for obtaining strong relaxations. 

To quantify the strength of a relaxation of $p(x)$, one is interested in bounding the error produced with respect to the global optimum $\zopt$ by optimizing over this relaxation. Error bounds for converging solutions of  iterative optimization algorithms have been the subject of study  before \citep{pang1997error}, but since these are not suited for studying relaxation strengths, different error measures have been proposed. \citet{mahdi2010coloring} studied a relative error measure for the relaxation of a bilinear polynomial $p\in\real[x]_{2}$ over $\S=\Hzone$ obtained by convexifying each monomial with its McCormick envelopes. They showed that for every $x\in\Hzone$, the ratio of the difference between the McCormick overestimator and underestimator values at $x$ and the difference between the concave and convex envelope values at $x$ can be bounded by a constant that is solely in terms of the chromatic number of the co-occurrence graph of the bilinear polynomial. Recently, \citet{boland2017bounding} showed that this same ratio cannot be bounded by a constant independent of $n$. Another, and somewhat natural, way of measuring the error from a relaxation is to bound the absolute gap $\zopt - \tilde{z}_{\S}$, where $\tilde{z}_{\S}$ is a lower bound on $\zopt$ due to some convex relaxation of $\{(x,w)\in\S\times\real\mid w = p(x)\}$. Such a bound helps determine how close one is to optimality in a global optimization algorithm. Also, there are examples (cf. $\multilin$ over $\Honer$ in \citep[pp. 332]{mahdi2010coloring}) where the relative error gap of McCormick relaxation goes to $\infty$, while this can never happen with the  absolute gap. The only result that we know of on bounding absolute gaps for general polynomials is due to \citet{de2010error} who used  Bernstein approximation of polynomials for a hierarchy of LP and SDP relaxations. (On the contrary, \citep{de2016convergence,de2015error} bound the absolute error from upper bounds on $\zopt$.). We mention that the absolute errors arising from piecewise linear relaxations of bilinear monomials appearing in a specific application were studied by \citet{deygupte2013pooling}. Finally, a third error measure is based on comparing the volume of a convex relaxation to the volume of the convex hull. This has been done for McCormick relaxations of a trilinear monomial over a box by \citet{speakman2015quantifying}. 

\paragraph{Our contribution.} 
In this paper, we bound the absolute gap to $\zopt$ from monomial convexification and thereby add to the small number of explicit error bounds for polynomial optimization. To bound this gap, we analyze the error in relaxing a monomial with its convex hull. This error analysis not only implies a bound on the absolute gap to $\zopt$ but it also can be used for bounding the error in relaxing any optimization problem with polynomials in both the objective and constraints. Our error measure is the maximum absolute deviation between the actual value and the approximate value of the monomial. Thus for any set $X$ in the $(x,w)$-space, we denote the error of $X$ with respect to $\monom$ by $\err{X}$, which is defined as 
\begin{equation}\label{eq:err}
\err{X} := \max_{(x,w)\in X}\;\abs{w - \monom}. 
\end{equation}
We will mostly be interested in the error $\err{\cdot}$ for the convex hull of the graph of $\monom$ and for the convex and concave envelopes of $\monom$.  As mentioned earlier, monomial convexification errors have gone largely unnoticed in the literature, the only results being for the bilinear monomial $x_{1}x_{2}$. The folklore result \citep[cf.][]{al1983jointly} for  $x_{1}x_{2}$ over a rectangle $[\L_{1},\U_{1}]\times[\L_{2},\U_{2}]$ states that the convex hull and envelope errors are attained at $(x_{1},x_{2}) = (\frac{\U_{1} + \L_{1}}{2}, \frac{\U_{2}+\L_{2}}{2})$, which is the midpoint of the two diagonals of the box.  \citet{linderoth2005simplicial} derived error formulae for $x_{1}x_{2}$ over triangles created by the two diagonals of $[\L_{1},\U_{1}]\times[\L_{2},\U_{2}]$. Since convex hull and envelope results for a  bilinear polynomial are invariant to affine transformations, it is equivalent to consider $x_{1}x_{2}$ over $\Hzone[][][2]$. 
Substituting $n=2$ and $\power_{1}=\power_{2}=1$ in our forthcoming error bounds recover these    known errors.


\paragraph{Notation.}
The vector of ones is $\onevec$, the $i^{th}$ unit coordinate vector is $\onevec[i]$, and the vector of zeros is $\zerovec$; the dimensions will be apparent from the context in which these vectors are used. 
The convex hull of a set $X$ is $\conv{X}$ and the relative interior of $\conv{X}$ is $\relint X$.  A nonempty box in $\real^{n}$ is $\H := [\L_{1},\U_{1} ] \times \dots \times [\L_{n},\U_{n} ]$. The standard boxes that we focus on in this paper are $\Hzone,[-1,1]^{n}$, and $[1,r]^{n}$, for arbitrary scalar $r>1$. Another compact convex set of interest to us is the standard $n$-simplex $\Delta_{n} := \conv{\{\zerovec,\onevec[1],\dots,\onevec[n] \}} = \left\{x\ge\zerovec\mid \sum_{j=1}^{n}x_{j} \le 1 \right\}$. For convenience, we write $\f(x) := \monom$, $\fmin := \min_{x\in\S}\f(x)$, $ \fmax := \max_{x\in\S}\f(x)$. The convex envelope of $\monom$ over $\S$, which is defined as the pointwise supremum of all convex underestimators of $\monom$ over $\S$, is denoted by $\cvxenv{\f}{\S}$. The concave envelope, which is analogously defined, is $\concenv{\f}{\S}$. The graph of a function $g(x)$ with domain $\S$ is denoted by $\graph[\S][g] := \{(x,w)\in\S\times\real\mid w = g(x) \}$. The graphs of the monomial and its envelopes are $\graphmonom$, 
$\graph[][\cvxenv{\f}{\S}]$ 
and $\graph[][\concenv{\f}{\S}]$. 
Two special types of monomials are the symmetric monomial and the multilinear monomial. The former has $\power = \power_{0}\onevec$ for some $\power_{0}\in\posint$, and the latter, denoted by $\f[m](x) := \multilin$, is a special case of the former with $\power=\onevec$. 
For $\beta\in\nonnegreal[n]$, we denote $\degree[\beta] := \sum_{j=1}^{n}\beta_{j}$.

\subsection{Main results}
We obtain strong and explicit upper bounds on $\err{\cdot}$ for different types of monomials. In the polynomial optimization literature, it is common to assume, upto scaling and translation, that the domain $\S$ of the problem is a subset of $\Hzone$. When analyzing a single monomial, this assumption is not  without loss of generality since the monomial basis of $\real[x]$ is not closed upto translating and scaling the variables. 
Hence we divide our analysis into two parts. First, we consider a general monomial $\f(x)=\monom$ over a compact convex set $\S\subseteq\Hzone$, and bound the errors without using explicit analytic forms of the envelopes, which  are hard to compute and unknown in closed form for arbitrary $\S$. The concave error is bounded by computing the error from a specific concave overestimator that is precisely the concave envelope of $\monom$ over $\Hzone$.  On the convex side, we bound the error for any convex underestimator given as the pointwise supremum of (possibly uncountably many) linear functions, each of which underestimates $\monom$ over $\S$. Thus our error analysis has a  distinctly polyhedral flavor. 

In the second part, we limit our attention to a multilinear monomial $\f[m](x) = \multilin$, but the domain $\S$ is either a box with constant ratio or a symmetric box. By a box with constant ratio, we mean any box $\H$ for which there exists a scalar $r > 1$ such that $\U_{i}/\L_{i} = r$ for all $i$ with $\L_{i}>0$, and $\L_{i}/\U_{i} = r$ for all $i$ with $\L_{i}<0$. By a symmetric box, we mean any box $\H$ that has $\U_{i}=-\L_{i}$ for all $i$. Since these boxes are simple scalings of $\Honer$ and $\Hnegoo$, respectively, and our error measure $\err{\cdot}$ scales, we restrict our attention to only $\Honer$ and $\Hnegoo$. 
Contrary to the first part, 
here we first derive explicit polyhedral characterizations of the envelopes and convex hulls over $\Honer$ and $\Hnegoo$ 
and use them to perform a tight error analysis. The polyhedral representations for the $\Honer$ case follow from the literature, whereas those over $\Hnegoo$ 
 are established in this paper.

\subsubsection{General monomial}\label{sec:mainmonom}
Consider a monomial $\monom$ with $\power_{j}\in\posint$ for all $j$. The degree of this monomial is $\degree := \degree[\power] = \sum_{j=1}^{n}\power_{j}$. The following constants will be useful throughout the paper:
\begin{equation}\label{eq:C}
\erropt := \left(1 - \frac{1}{\degree} \right)\,\degree^{\frac{1}{1-\degree}}, 
\qquad \erroptcvx := \left(1 - \frac{1}{\degree} \right)^{\degree}.
\end{equation}

\begin{theorem}\label{thm:converr01}
For the monomial $\f(x)=\monom$ over $\S\subseteq\Hzone$, we have 
\[
\errvex \le \left(1 - \frac{1}{\degree[\gamma]}\right)^{\degree[\gamma]} \le \erroptcvx , \quad \errcav \le \errconv \le \erropt,
\] where for $\sigma_{j} := 1 - \max\{x_{j}\mid x \in \S\}$, we define \[
\gamma_{j}:=
\begin{cases}
\displaystyle\frac{1 - (1-\sigma_{j})^{\power_{j}}}{\sigma_{j}}, & \text{if $\sigma_{j}>0$},\\
\power_{j}, & \text{if $\sigma_{j}=0$,}
\end{cases} \quad j=1,\dots,n.
\] 

If $\zerovec,\onevec\in\S$, then $\errconv = \errcav = \erropt$.
\end{theorem}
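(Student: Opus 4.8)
The plan is to produce a matching lower bound $\erropt\le\errcav$; combined with the chain $\errcav\le\errconv\le\erropt$ already established in the first part of the theorem, this forces $\errcav=\errconv=\erropt$. The entire argument reduces to the behaviour of the monomial on the main diagonal of the cube, and the hypothesis $\zerovec,\onevec\in\S$ is precisely what keeps that diagonal inside $\S$.

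First I would observe that, since $\S$ is convex and contains both $\zerovec$ and $\onevec$, the whole segment $\{t\onevec : t\in[0,1]\}$ lies in $\S$. Along this segment the monomial collapses to a univariate power, $\f(t\onevec)=\prod_{j=1}^{n}t^{\power_{j}}=t^{\degree}$, where $\degree=\sum_{j=1}^{n}\power_{j}$. The distinguished point is $t^{\ast}:=\degree^{1/(1-\degree)}$, the maximizer on $[0,1]$ of $h(t):=t-t^{\degree}$, which is concave with $h(0)=h(1)=0$; using $(t^{\ast})^{\degree-1}=1/\degree$ one verifies the elementary identity $h(t^{\ast})=(1-1/\degree)\,t^{\ast}=\erropt$.

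To reach $t^{\ast}$ from the envelope side, I would study the univariate restriction $\phi(t):=\concenv{\f}{\S}(t\onevec)$, which is concave on $[0,1]$ because $\concenv{\f}{\S}$ is concave and the diagonal lies in $\S$. Since the concave envelope overestimates $\f$ and $\power_{j}\ge1$ gives $\f(\zerovec)=0$ and $\f(\onevec)=1$, the endpoint values satisfy $\phi(0)\ge0$ and $\phi(1)\ge1$; as a concave function dominates its chords, $\phi(t)\ge t\,\phi(1)+(1-t)\,\phi(0)\ge t$ for every $t\in[0,1]$. Evaluating the (nonnegative) deviation at $x=t^{\ast}\onevec$ then gives $\errcav\ge\phi(t^{\ast})-\f(t^{\ast}\onevec)\ge t^{\ast}-(t^{\ast})^{\degree}=\erropt$, which is the bound sought. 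Equivalently, and more transparently for the convex-hull quantity, the point $(t^{\ast}\onevec,\,t^{\ast})=t^{\ast}(\onevec,1)+(1-t^{\ast})(\zerovec,0)$ is a convex combination of the graph points $(\onevec,\f(\onevec))$ and $(\zerovec,\f(\zerovec))$, so it belongs to $\conv{\graphmonom}$ and realizes deviation exactly $\erropt$.

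The argument is short and essentially self-contained, so I do not expect a single hard obstacle; the only step needing care is the diagonal lower bound $\phi(t)\ge t$, which rests on the twin facts that the envelope overestimates at the two cube vertices lying in $\S$ and that concavity propagates this from the endpoints to the interior. The remainder is the one-variable extremal computation identifying $t^{\ast}$ and the value $\erropt$, which is routine calculus.
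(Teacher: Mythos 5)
Your argument for the final claim is correct, and for that claim it is essentially the paper's own proof: the paper obtains the matching lower bound by specializing Lemma~\ref{lem:lowerbd} to $t_{1}=0$, $t_{2}=1$, and the proof of that lemma is precisely your diagonal argument --- the chord of $\graphmonom$ joining $(\zerovec,0)$ and $(\onevec,1)$ lies in every convex relaxation of the graph, a concave overestimator dominates the chord values by concavity plus overestimation at the two endpoints, and evaluating at $t^{\ast}=\degree^{1/(1-\degree)}$ gives the deviation $t^{\ast}-(t^{\ast})^{\degree}=\left(1-\frac{1}{\degree}\right)\degree^{\frac{1}{1-\degree}}=\erropt$. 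So that portion of your proposal is sound and aligned with the paper.

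The genuine gap is that this is only the last sentence of the theorem. You take the chain $\errcav\le\errconv\le\erropt$ and the bound $\errvex \le \left(1-\frac{1}{\degree[\gamma]}\right)^{\degree[\gamma]} \le \erroptcvx$ as ``already established in the first part of the theorem,'' but these are assertions of the theorem, not hypotheses, and they are where nearly all of the work lies; without them your diagonal computation yields only $\errcav\ge\erropt$ and $\errconv\ge\erropt$, not the claimed equalities, and nothing at all about $\gamma$. In the paper these upper bounds require: (i) the analysis of the concave overestimator $\min_{j}x_{j}$, in particular the off-diagonal inequality $\min_{j}x_{j}\le(\monom)^{1/\degree}$ on $\Hzone$ (Theorem~\ref{thm:concub}, Corollary~\ref{corr:concub}), giving $\errcav\le\erropt$; (ii) the convex-underestimator analysis, where one must first prove that $1+\sum_{j}\gamma_{j}(x_{j}-1)$, with $\gamma$ built from the coordinate projections of $\S$, actually underestimates $\monom$ on $\S$ (Proposition~\ref{prop:Prelax}, proved by induction with Lemma~\ref{lem:exp1}), and then bound its error via the generalized arithmetic--geometric means inequality (Theorem~\ref{thm:errenv01}, Corollary~\ref{corr:cvxdegbound}); (iii) Observation~\ref{obs:errmaxenv}, reducing $\errconv$ to the maximum of the two envelope errors; and (iv) Lemma~\ref{lem:dineq}, which shows $\erroptcvx\le\erropt$ --- without it one only gets $\errconv\le\max\{\erropt,\erroptcvx\}$. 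None of these steps can be extracted from behavior on the segment $\conv{\{\zerovec,\onevec\}}$, since the envelopes are pinned down there only from one side; the convex-side bound in particular depends on how the monomial behaves away from the diagonal, so a genuinely new idea (the means-inequality argument) is needed, not routine calculus.
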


The monotonicity of $\erropt$ and $\erroptcvx$ with respect to $\degree$ suggests the intuitive result that convexifying higher degree monomials will likely produce greater errors. As $\degree \to\infty$, we have $\erropt\to 1$ and $\erroptcvx\to 1/e$.

The bounds $\erropt$ and $\erroptcvx$ depend only on the degree of the monomial. They are a consequence of some general error bounds, established in Theorem~\ref{thm:concub} for the concave error and in Theorem~\ref{thm:errenv01} for the convex error, that depend on how the monomial behaves over the domain $\S$. The arguments used in proving Theorem~\ref{thm:converr01} also imply that a family of convex relaxations of $\graphmonom$ has error equal to $\erropt$. We show this in Proposition~\ref{prop:Perr}. We also guarantee in Corollary~\ref{corr:converr01multi} that the convex envelope error bound $\erroptcvx$ is tight for $\f[m](x)$ over $\S=\Hzone$. 

Theorem~\ref{thm:converr01} has two immediate implications. First, we obtain the error in convexifying a monomial over $\Hzone$.
\begin{corollary}
$\errconv[\Hzone] =\erropt$.
\end{corollary}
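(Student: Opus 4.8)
The plan is to obtain this as an immediate specialization of Theorem~\ref{thm:converr01} rather than to prove anything new. Observe that $\Hzone = [0,1]^{n}$ is itself an admissible choice of $\S$ in that theorem, being a compact convex subset of $\Hzone$ (in fact equal to it). Hence the chain of inequalities $\errcav \le \errconv \le \erropt$ applies verbatim with $\S = \Hzone$, which already yields the upper bound $\errconv[\Hzone] \le \erropt$. It therefore remains only to argue that this bound is attained when the domain is the entire unit box.

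For the reverse direction I would invoke the final sentence of Theorem~\ref{thm:converr01}, which asserts the equality $\errconv = \errcav = \erropt$ whenever $\zerovec,\onevec \in \S$. Since $\Hzone = [0,1]^{n}$ trivially contains both the zero vector $\zerovec$ and the all-ones vector $\onevec$ as two of its vertices, this hypothesis is satisfied with $\S = \Hzone$. Combining the two directions gives $\errconv[\Hzone] = \erropt$, as claimed.

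There is essentially no obstacle to surmount here: the whole content of the corollary is already packaged inside the equality case of the theorem, and the only thing one must check is the trivial membership $\zerovec,\onevec \in \Hzone$. All of the genuine difficulty — exhibiting a concave overestimator whose error equals $\erropt$ and locating the worst-case point $(x,w)$ at which $\abs{w - \monom}$ attains $\erropt$ — lives in the proof of Theorem~\ref{thm:converr01} itself. In this light the corollary merely records the qualitatively useful fact that the degree-only upper bound $\erropt$ is already tight on the standard unit box, and so cannot be sharpened without imposing additional restrictions on $\S$.
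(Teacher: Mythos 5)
Your proof is correct and is exactly how the paper obtains this corollary: it is stated there as an immediate implication of Theorem~\ref{thm:converr01}, using the equality case $\errconv = \errcav = \erropt$ triggered by $\zerovec,\onevec \in \Hzone$. Nothing further is needed, since all the substantive work indeed resides in the proof of the theorem itself.
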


Second, we obtain an additive error bound on polynomial optimization over subsets of $\Hzone$. For a polynomial $p=\sum_{\power}c_{\power}\monom\in\real[x]$, denote
\begin{equation}\label{eq:L}
L^{\prime}(p) = \max\left\{\max_{\power\colon c_{\power}>0} c_{\power}\erroptcvx, \, \max_{\power\colon c_{\power}<0} -c_{\power}\erropt \right\}.
\end{equation}
Let $\zmon := \min\{\sum_{\power}c_{\power}w_{\power}\mid (x,w_{\power})\in\conv{\graphmonom} \ \, \forall \power \}$ be the lower bound\footnote{To avoid tediousness and with a slight abuse of notation, for each monomial  we write $(x,w_{\power})\in\conv{\graphmonom}$ with the understanding that those $x_{j}$ that appear in the monomial are included.} from monomial convexification on the global optimum $\zopt= \min_{x\in\S}p(x)$.
 
\begin{corollary}\label{corr:L}
For any $p\in\real[x]_{m}$ and compact convex $\S\subseteq\Hzone$, \[\zopt - \zmon \le L^{\prime}(p)\binom{n+m}{n}. \]
\end{corollary}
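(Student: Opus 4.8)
The plan is to recast the monomial-convexification bound $\zmon$ as the minimum over $\S$ of an explicit convex underestimator of $p$ assembled from the monomial envelopes, and then to bound the optimality gap by the standard device of evaluating the true polynomial at the point minimizing this underestimator. First I would record that, since $\S$ is compact and each monomial is continuous, $\graphmonom$ is compact, so $\conv{\graphmonom}$ is a compact convex set whose fiber over any fixed $x\in\S$ is exactly the interval $[\cvxenv{\f}{\S}(x),\concenv{\f}{\S}(x)]$ --- the familiar fact that the lower and upper boundaries of the convex hull of a graph are its convex and concave envelopes. In the definition of $\zmon$ the auxiliary variables $w_{\power}$ are coupled only through the shared $x$, so for fixed $x$ they decouple and may be optimized separately within their fibers. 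Minimizing $\sum_{\power}c_{\power}w_{\power}$ therefore selects $w_{\power}=\cvxenv{\f}{\S}(x)$ when $c_{\power}>0$ and $w_{\power}=\concenv{\f}{\S}(x)$ when $c_{\power}<0$, yielding $\zmon=\min_{x\in\S}\hat{p}(x)$ for
\[
\hat{p}(x):=\sum_{\power\colon c_{\power}>0}c_{\power}\,\cvxenv{\f}{\S}(x)+\sum_{\power\colon c_{\power}<0}c_{\power}\,\concenv{\f}{\S}(x)\;\le\; p(x),\quad x\in\S,
\]
where, abusing notation as in the definition of $\zmon$, the envelopes are taken for the monomial indexed by $\power$. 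Each summand is convex (a nonnegative multiple of a convex envelope or a nonpositive multiple of a concave envelope), and the inequality $\hat{p}\le p$ follows from $\cvxenv{\f}{\S}(x)\le\monom\le\concenv{\f}{\S}(x)$, reconfirming along the way that $\zmon\le\zopt$.

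Next, let $\bar{x}\in\argmin_{x\in\S}\hat{p}(x)$, which exists because $\hat{p}$ is continuous on the compact set $\S$. Since $\bar{x}\in\S$ is feasible, $\zopt\le p(\bar{x})$, while $\zmon=\hat{p}(\bar{x})$, so
\[
\zopt-\zmon\;\le\;p(\bar{x})-\hat{p}(\bar{x})=\sum_{\power\colon c_{\power}>0}c_{\power}\bigl(\bar{x}^{\power}-\cvxenv{\f}{\S}(\bar{x})\bigr)+\sum_{\power\colon c_{\power}<0}(-c_{\power})\bigl(\concenv{\f}{\S}(\bar{x})-\bar{x}^{\power}\bigr).
\]
Each parenthesized difference is a deviation at $\bar{x}$ between a monomial and one of its envelopes, hence is bounded above by $\errvex\le\erroptcvx$ on the convex side and by $\errcav\le\erropt$ on the concave side, with $\erroptcvx,\erropt$ evaluated at the degree of that monomial (Theorem~\ref{thm:converr01}). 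Consequently every term of the first sum is at most $c_{\power}\erroptcvx$ and every term of the second at most $-c_{\power}\erropt$, so by the definition of $L^{\prime}(p)$ in \eqref{eq:L} each term is at most $L^{\prime}(p)$ (the constant term cancels between $\zopt$ and $\zmon$, and affine monomials contribute zero, so these degenerate degrees are harmless). Since $p\in\real[x]_{m}$ involves at most $\binom{n+m}{n}$ monomials, the number of monomials in $n$ variables of degree at most $m$, summing over all terms gives $\zopt-\zmon\le\binom{n+m}{n}L^{\prime}(p)$.

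I expect the main obstacle to be the rigorous justification of the first step, namely the identity $\zmon=\min_{x\in\S}\hat{p}(x)$: this rests both on the fiber characterization of $\conv{\graphmonom}$ and on the observation that, for fixed $x$, the optimal choices of the variables $w_{\power}$ decouple across the monomials. Once this separable convex reformulation is established, the remainder is the routine ``evaluate the true objective at the relaxed solution'' argument combined with the degree-wise envelope errors supplied by Theorem~\ref{thm:converr01} and a count of monomials.
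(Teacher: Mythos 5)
Your proposal is correct and takes essentially the same route as the paper's proof: recast $\zmon$ as the minimum over $\S$ of the separable underestimator built from the monomial envelopes, bound the resulting monomial-wise envelope gaps via Theorem~\ref{thm:converr01} together with the definition of $L^{\prime}(p)$ in \eqref{eq:L}, and count the at most $\binom{n+m}{n}$ monomials of a polynomial in $\real[x]_{m}$. If anything, your write-up is more careful than the paper's terse version, since evaluating $p$ and the underestimator at the relaxation's minimizer turns the paper's loosely stated equality into a proper inequality, and you explicitly dispose of the degenerate constant and affine terms.
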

\begin{proof}
We have $\zmon = \sum_{\power\colon c_{\power}>0}c_{\power}\cvxenv{\f}{\S}(x) + \sum_{\power\colon c_{\power}<0}c_{\power}\concenv{\f}{\S}(x)$. Therefore, \[\zopt-\zmon = \sum_{\power\colon c_{\power}>0}c_{\power}(\monom - \cvxenv{\f}{\S}(x)) + \sum_{\power\colon c_{\power}<0}(-c_{\power})(\concenv{\f}{\S}(x) -\monom).\] Applying Theorem~\ref{thm:converr01} and the construction of $L^{\prime}(p)$ gives us $\zopt-\zmon \le L^{\prime}(p)\sum_{\power}1$. Since $p\in\real[x]_{m}$, there are at most $\binom{n+m}{n}$ monomials in $p(x)$, leading to the claimed error bound. 
\end{proof}

Computing $L^{\prime}(p)$ may get tedious if $p(x)$ has a large number of monomials. A cheaper bound is possible by considering only the largest coefficient in $p(x)$.
\begin{corollary}\label{corr:L2}
For any $p\in\real[x]_{m}$ and compact convex $\S\subseteq\Hzone$, \[\zopt - \zmon \le \max_{\power}\,\abs{c_{\power}}\,\left(1 - \frac{1}{m} \right)m^{\frac{1}{1-m}}\binom{n+m}{n}.\]
\end{corollary}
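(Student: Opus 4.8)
The statement is a uniform simplification of Corollary~\ref{corr:L}, so my plan is to start from the bound $\zopt - \zmon \le L^{\prime}(p)\binom{n+m}{n}$ and to show that the mixed quantity $L^{\prime}(p)$ is dominated by the single constant $\max_{\power}\abs{c_{\power}}\left(1-\tfrac{1}{m}\right)m^{\frac{1}{1-m}}$. Recall that $L^{\prime}(p)$ is a maximum over the per-monomial quantities $c_{\power}\erroptcvx$ (for $c_{\power}>0$) and $-c_{\power}\erropt$ (for $c_{\power}<0$), where for each monomial the constants $\erropt,\erroptcvx$ are understood to be evaluated at that monomial's own degree $\degree[\power]=\sum_{j}\power_{j}$. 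The whole task is therefore to replace these degree-dependent constants by a single constant pinned to the top degree $m$, and then to pull $\max_{\power}\abs{c_{\power}}$ out of the maximum.

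Concretely, I would bound the two kinds of terms separately. Since $p\in\real[x]_{m}$ forces $\degree[\power]\le m$ for every monomial, the monotonicity of $\erropt$ in the degree (noted right after Theorem~\ref{thm:converr01}) gives, for the negative-coefficient terms, $-c_{\power}\erropt=\abs{c_{\power}}\erropt\le\abs{c_{\power}}\left(1-\tfrac{1}{m}\right)m^{\frac{1}{1-m}}$. For the positive-coefficient terms I would bound $c_{\power}\erroptcvx$ in two steps, first comparing $\erroptcvx\le\erropt$ at a common degree and then applying monotonicity and $\degree[\power]\le m$ to reach $\erropt\le\left(1-\tfrac{1}{m}\right)m^{\frac{1}{1-m}}$. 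Taking the maximum and factoring out $\max_{\power}\abs{c_{\power}}$ yields
\[
L^{\prime}(p)\le \max_{\power}\abs{c_{\power}}\left(1-\tfrac{1}{m}\right)m^{\frac{1}{1-m}},
\]
and substituting this into Corollary~\ref{corr:L} gives exactly the claimed bound.

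The only genuinely analytic point is the comparison of the two degree-constants, i.e.\ the pointwise inequality $\erroptcvx\le\erropt$ together with the stated monotonicity of $\erropt$. I expect this to be the main (though modest) obstacle: it is a one-variable calculus fact, with the two constants coinciding at degree $2$ (both equal $\tfrac{1}{4}$) and their ratio tending to $1/e<1$ as the degree grows, which a short log-derivative computation makes rigorous. Everything else is bookkeeping over the at most $\binom{n+m}{n}$ monomials.

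Finally, I note that one can sidestep the comparison $\erroptcvx\le\erropt$ entirely by returning to the identity established in the proof of Corollary~\ref{corr:L},
\[
\zopt-\zmon=\sum_{c_{\power}>0}c_{\power}\bigl(\monom-\cvxenv{\f}{\S}(x)\bigr)+\sum_{c_{\power}<0}(-c_{\power})\bigl(\concenv{\f}{\S}(x)-\monom\bigr),
\]
and bounding each summand directly by $\errconv$. For every $x\in\S$ both $(x,\cvxenv{\f}{\S}(x))$ and $(x,\concenv{\f}{\S}(x))$ are the extreme vertical points of $\conv{\graphmonom}$ over $x$ and hence lie in $\conv{\graphmonom}$, so $\errvex\le\errconv$ and $\errcav\le\errconv$, and Theorem~\ref{thm:converr01} supplies $\errconv\le\erropt$. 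Invoking monotonicity and $\degree[\power]\le m$ once more bounds each summand by $\max_{\power}\abs{c_{\power}}\left(1-\tfrac{1}{m}\right)m^{\frac{1}{1-m}}$, and summing gives the result using only the monotonicity of $\erropt$.
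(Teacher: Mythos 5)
Your proposal is correct, and its main route is exactly the paper's: the paper derives this corollary in one line from Corollary~\ref{corr:L} by using $\degree[\power]\le m$ and the monotonicity of $\erropt$ in the degree. The one point you flag as the real obstacle --- the comparison $\erroptcvx\le\erropt$ at a common degree --- is already established in the paper (it is the Claim inside the proof of Theorem~\ref{thm:converr01}, proved via Lemma~\ref{lem:dineq}), so you may cite it rather than re-derive it; and your alternative route via $\errvex\le\errconv\le\erropt$ (Observation~\ref{obs:errmaxenv} together with Theorem~\ref{thm:converr01}) is also valid and neatly sidesteps that comparison altogether.
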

\begin{proof}
Follows from Corollary~\ref{corr:L} after using $\degree\le m$ and $\erropt$ being monotone in $\degree$.
\end{proof}

The bounds from Theorem~\ref{thm:converr01}, although applicable to arbitrary $\S\subseteq\Hzone$, can be weak if $\zerovec\in\S$ and $\onevec\notin\S$. To emphasize this, we consider a monomial over the standard simplex $\Delta_{n}$ and obtain error bounds that depend on not just the degree of the monomial but also the exponent of each variable. These bounds are stronger than the bounds $\erropt$ and $\erroptcvx$.

\begin{theorem}\label{thm:simplex}
\[ 
\err{\graph[][\concenv{\f}{\Delta_{n}}]} \le \err{\conv{\graph[\Delta_{n}][\f]}} \le \frac{(\monom[][\power])^{1/\degree}}{\degree} \, -\, \frac{\monom[][\power]}{\degree^{\degree}}, \quad \err{\graph[][\cvxenv{\f}{\Delta_{n}}]} = \frac{\monom[][\power]}{\degree^{\degree}}. 
\]
All of the above bounds are tight for a symmetric monomial. 
\end{theorem}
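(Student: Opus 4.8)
The plan is to handle the convex and concave sides of $\graph[\Delta_n][\f]$ separately and to collapse the concave analysis to a one-variable power inequality. Write $M:=\monom[][\power]/\degree^{\degree}$ and $\theta:=(\monom[][\power])^{1/\degree}/\degree$, so that $M=\theta^{\degree}$ and the asserted hull bound is $\theta-M$. First I record the two exact quantities. Maximizing $\f(x)=\monomexp$ over $\Delta_n$ is a weighted AM--GM problem whose optimum on the face $\sum_j x_j=1$ is the weighted centroid $x^{\ast}=\bpower/\degree$, giving $\fmax[\Delta_n]=\prod_j(\power_j/\degree)^{\power_j}=M$, while $\fmin[\Delta_n]=0$ on any facet $x_j=0$. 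Because $\f\ge0$ on $\Delta_n$ and $\f$ vanishes at every vertex $\zerovec,\onevec[1],\dots,\onevec[n]$ (here $n\ge2$, so each $\onevec[i]$ annihilates a factor), any convex minorant is at most its vertex values $0$; as $0$ is itself a minorant, $\cvxenv{\f}{\Delta_n}\equiv0$ and hence $\errvex[\Delta_n]=\fmax[\Delta_n]=M$. This is the rightmost equality, and it is exact for every monomial.

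For the hull I would invoke the standard description $\conv{\graph[\Delta_n][\f]}=\{(x,w):\cvxenv{\f}{\Delta_n}(x)\le w\le\concenv{\f}{\Delta_n}(x)\}$, so the deviation $\abs{w-\f(x)}$ is maximized on one of the two envelopes and $\err{\conv{\graph[\Delta_n][\f]}}=\max\{\errvex[\Delta_n],\errcav[\Delta_n]\}$. This yields the first inequality $\errcav[\Delta_n]\le\err{\conv{\graph[\Delta_n][\f]}}$ at once, and reduces the middle inequality to bounding $\errcav[\Delta_n]$ and checking that $M$ does not exceed $\theta-M$.

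The concave side rests on the observation that $\Phi:=\f^{1/\degree}=\prod_j x_j^{\power_j/\degree}$ is a weighted geometric mean with weights $\power_j/\degree\ge0$ summing to $1$, hence concave on $\Delta_n$; moreover $\f\le M\le1$ forces $\Phi=\f^{1/\degree}\ge\f$, so $\Phi$ is a concave overestimator and $\concenv{\f}{\Delta_n}\le\Phi$. Writing $s=\Phi(x)$, which sweeps $[0,\theta]$ since $\Phi(x^{\ast})=M^{1/\degree}=\theta$, I obtain $\errcav[\Delta_n]\le\max_{s\in[0,\theta]}(s-s^{\degree})$. The unconstrained maximizer of $s-s^{\degree}$ is $s=\degree^{1/(1-\degree)}$, so once $\theta\le\degree^{1/(1-\degree)}$ the map increases on $[0,\theta]$ and its maximum is the endpoint value $\theta-\theta^{\degree}=\theta-M$, which is exactly the claimed bound. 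The same inequality gives $\theta^{\degree-1}\le1/\degree$, whence $2M=2\theta\,\theta^{\degree-1}\le2\theta/\degree\le\theta$ and $M\le\theta-M$; combining with the previous paragraph, $\err{\conv{\graph[\Delta_n][\f]}}=\max\{M,\errcav[\Delta_n]\}\le\theta-M$.

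The main obstacle is therefore the scalar estimate $\theta\le\degree^{1/(1-\degree)}$, equivalently $\sum_j\power_j\log\power_j\le\frac{\degree(\degree-2)}{\degree-1}\log\degree$. I would prove this by concentration: $t\mapsto t\log t$ is convex, so $\sum_j\power_j\log\power_j$ is maximized over $\{\bpower\ge\onevec:\sum_j\power_j=\degree\}$ at a vertex carrying a single large exponent, namely $\bpower=(\degree-1,1)$ with $\monom[][\power]=(\degree-1)^{\degree-1}$, reducing the claim to the one-variable inequality $(\degree-1)^{\degree-1}\le\degree^{\degree(\degree-2)/(\degree-1)}$ for integers $\degree\ge2$; everything else is bookkeeping. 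Finally, for the tightness assertion I would specialize to a symmetric monomial $\bpower=\power_0\onevec$, where $\theta=1/n$ and $M=1/n^{\degree}$: the convex envelope error equals $M$ exactly, and the overestimator $\Phi$ realizes the full deviation $\theta-M$ at the centroid $x^{\ast}=\onevec/\degree$ (there $\Phi=\theta$ and $\f=M$), with $1/n\le\degree^{1/(1-\degree)}$ confirming that this endpoint governs; the bilinear case $n=\degree=2$ recovers the classical value for $x_1x_2$.
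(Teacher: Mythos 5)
Your proposal is correct in substance and, once unpacked, follows essentially the same route as the paper's proof: split the hull error into the two envelope errors (Observation~\ref{obs:errmaxenv}); get $\cvxenv{\f}{\Delta_{n}}\equiv 0$ from the vanishing vertex values plus Jensen, so the convex error equals $\fmax[\Delta_{n}]=\monom[][\power]/\degree^{\degree}=:M$ (this is Proposition~\ref{prop:cvxubsimpl}); bound the concave error by a concave overestimator whose error reduces to a univariate maximization of $s-s^{\degree}$; and decide which endpoint governs by the same combinatorial step, namely maximizing $\monom[][\power]$ over $\{\power\ge\onevec,\ \sum_{j}\power_{j}=\degree\}$ at a vertex and reducing everything to $(\degree-1)^{\degree-1}\le\degree^{\degree(\degree-2)/(\degree-1)}$, which is exactly Lemma~\ref{lem:dineq}. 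Your two local deviations are mild but real improvements in exposition. First, you use $\Phi=\f^{1/\degree}$ directly as the concave overestimator, which requires (and you should state) concavity of the weighted geometric mean; the paper instead uses $\concf(x)=\min_{j}x_{j}$ and only bounds $\concf\le\f^{1/\degree}$ inside Theorem~\ref{thm:concub}, routing the argument through the $\xi^{\prime}$ machinery --- the two univariate reductions coincide under the substitution $s=\xi^{1/\degree}$. Second, your derivation of $M\le\theta-M$ (with $\theta=(\monom[][\power])^{1/\degree}/\degree$) from $\theta^{\degree-1}\le 1/\degree$ and $\degree\ge 2$ is cleaner than the paper's, which proves this comparison by a separate equivalence chain using $\power_{(n)}\le\degree-1$ and a second binomial expansion; you simply reuse the scalar inequality already established.

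Two cautions. The inequality you set aside as ``bookkeeping,'' $(\degree-1)^{\degree-1}\le\degree^{\degree(\degree-2)/(\degree-1)}$, is the actual content of Lemma~\ref{lem:dineq} and does require its own short argument (the paper's binomial expansion $(\degree/(\degree-1))^{(\degree-1)^{2}}\ge 1+(\degree-1)=\degree$); a complete writeup cannot omit it, since it is the one place the arithmetic could fail. Also, in the tightness paragraph the attainment point for a symmetric monomial $\power=\power_{0}\onevec$ is the barycenter $\power/\degree=\onevec/n$, not $\onevec/\degree$ (these coincide only in the multilinear case $\power_{0}=1$); with that fix, your attainment argument matches Proposition~\ref{prop:concubsimpl}. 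Note finally that, exactly as in the paper, what is actually verified there is that the \emph{overestimator's} error attains $\theta-M$ at the barycenter; neither your argument nor the paper's computes the concave-envelope or hull error itself at that point, so your proposal is at the same level of rigor as the published proof on the tightness assertion.
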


\subsubsection{Multilinear monomial}\label{sec:mainmulti}
Consider the multilinear monomial $\f[m](x) = \multilin$.

\begin{theorem}\label{thm:converr1r}
Denote 
\begin{equation*}
\begin{split}
\D := \max_{i=1,\dots, n-1}\, \left\{\left(1+\frac{i}{n}(r-1)\right)^n \,-\, r^{i}\right\},\quad
\E := 1\,+\,\frac{r^n-1}{(r-1)}\left[\frac{n-1}{n}{\left(\frac{r^n-1}{n(r-1)}\right)}^{\frac{1}{n-1}}\,-\,1\right].
\end{split}
\end{equation*}
For $\f[m](x)$ over $\Honer$,  \[ \errcav[\Honer][{\f[m]}] = \E, \quad \errvex[\Honer][{\f[m]}] = \D, \quad \errconv[\Honer][{\f[m]}] = \max\{\D,\E\}.\] All bounds are attained only on $\relint{\{\onevec,r\onevec \}}$.
\end{theorem}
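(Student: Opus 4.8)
The plan is to first split the convex-hull error into the two envelope errors, and then to show that each envelope error is attained on the main diagonal, where it collapses to a one-variable calculation. For the splitting, note that for any $x\in\Honer$ the $w$-fiber of $\conv{\graphmonom}$ is exactly the interval $[\cvxenv{\f[m]}{\Honer}(x),\,\concenv{\f[m]}{\Honer}(x)]$, which contains $\multilin$; hence $\max_w\abs{w-\multilin}=\max\{\concenv{\f[m]}{\Honer}(x)-\multilin,\ \multilin-\cvxenv{\f[m]}{\Honer}(x)\}$, and maximizing over $x$ gives $\errconv[\Honer][{\f[m]}]=\max\{\errcav[\Honer][{\f[m]}],\errvex[\Honer][{\f[m]}]\}$. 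So it suffices to prove $\errcav[\Honer][{\f[m]}]=\E$ and $\errvex[\Honer][{\f[m]}]=\D$, each attained only on $\relint\{\onevec,r\onevec\}$.

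For the convex error I would exploit a collapse of the envelope to a univariate function. Using the explicit polyhedral form of $\cvxenv{\f[m]}{\Honer}$ (from the cited literature, or derived directly), the key claim is that it depends on $x$ only through the coordinate sum $S:=\sum_j x_j$, namely $\cvxenv{\f[m]}{\Honer}(x)=\Phi(S)$, where $\Phi$ is the convex piecewise-linear interpolant of the points $(k,r^{k})$, $k=0,\dots,n$, evaluated at $k=\frac{S-n}{r-1}$. The bound $\cvxenv{\f[m]}{\Honer}(x)\ge\Phi(S)$ follows by relaxing the representation constraints $\sum_w\lambda_w w=x$ to the single aggregate constraint fixing $\sum_w\lambda_w\abs{w}$; the matching upper bound follows because any $x$ with $\tfrac{S-n}{r-1}\in[i-1,i]$ lies in the convex hull of the box vertices having $i-1$ or $i$ coordinates equal to $r$ (a containment in the slab of the hypercube between two consecutive vertex layers), and every convex combination of such vertices has objective exactly $\Phi(S)$. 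Granting this, the convex error is $\multilin-\Phi(S)$; for fixed $S$ the subtracted term is constant, so maximizing reduces to maximizing $\multilin$ subject to $\sum_j x_j=S$, which by AM--GM is attained uniquely at the diagonal point $x=\tfrac{S}{n}\onevec$. Thus the convex error is attained only on the diagonal.

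For the concave error the envelope is not a function of $S$ alone, so I would instead use a sorted-region exchange argument. By permutation symmetry I may restrict to $1\le x_1\le\cdots\le x_n\le r$, where $\concenv{\f[m]}{\Honer}$ coincides with the single affine piece $L(x)=\sum_{j=1}^n r^{\,n-j}x_j+1-\frac{r^n-1}{r-1}$ (the minimizing permutation piece; that $L$ is a global concave overestimator follows since $L(w)\ge r^{\abs{w}}$ at every vertex $w$, hence $L\ge\multilin$ on the whole box by multilinear interpolation, while $\concenv{\f[m]}{\Honer}\ge L$ on the order simplex by concavity and vertex agreement). On this region the error $e(x)=L(x)-\multilin$ satisfies $\partial e/\partial x_1=r^{\,n-1}-\prod_{j\ge2}x_j\ge0$ and $\partial e/\partial x_n=1-\prod_{j<n}x_j\le0$, because each $x_j\le r$ and each $x_j\ge1$. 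Hence raising the smallest coordinate and lowering the largest (keeping the ordering) never decreases $e$; iterating drives any point to the diagonal without decreasing $e$, and the inequalities are strict off a lower-dimensional set, so the maximum is attained only on the diagonal.

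It then remains to carry out the one-variable optimizations along $x=s\onevec$. On the diagonal, $\concenv{\f[m]}{\Honer}(s\onevec)$ is the chord $1+\frac{s-1}{r-1}(r^n-1)$ and $\cvxenv{\f[m]}{\Honer}(s\onevec)=\Phi(ns)$ is piecewise linear with breakpoints $s_i=1+\frac{i}{n}(r-1)$ and values $r^i$. For the concave side, $e(s\onevec)=1+\frac{s-1}{r-1}(r^n-1)-s^n$ is maximized at the stationary point $s^\ast=\big(\frac{r^n-1}{n(r-1)}\big)^{1/(n-1)}$, and substituting yields exactly $\E$; since $\frac{r^n-1}{n(r-1)}$ is the average of $1,r,\dots,r^{n-1}$, it lies in $(1,r^{n-1})$, so $s^\ast\in(1,r)$. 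For the convex side, $s^n-\Phi(ns)$ is convex on each $[s_{i-1},s_i]$ (a convex function minus an affine one), hence maximized at a breakpoint $s_i\in(1,r)$ with value $s_i^n-r^i=(1+\tfrac{i}{n}(r-1))^n-r^i$; maximizing over $i=1,\dots,n-1$ gives $\D$. The most delicate step is the reduction to the diagonal: the convex case hinges on the nonobvious fact that $\cvxenv{\f[m]}{\Honer}$ is a function of $\sum_j x_j$ only, and the concave case hinges on identifying the active facet on the order simplex and checking that its gradient makes the exchange move monotone; the ensuing univariate calculus and the $\relint$ claim are then routine.
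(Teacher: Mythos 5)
Your decomposition of the convex-hull error into the two envelope errors is exactly the paper's Observation on envelope errors, and your convex-side argument is essentially the paper's: the explicit envelope of Proposition~\ref{prop:convoner} depends on $x$ only through $\sum_j x_j$, the arithmetic--geometric means inequality reduces the maximization to the diagonal, and the maximum of a convex function minus a piecewise affine one is attained at the breakpoints $1+\tfrac{i}{n}(r-1)$, giving $\D$; your univariate computations producing $\E$ and $\D$ on the diagonal are also correct.

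The gap is in your concave-side reduction to the diagonal. The inequalities $\partial e/\partial x_1 = r^{n-1}-\prod_{j\ge 2}x_j\ge 0$ and $\partial e/\partial x_n = 1-\prod_{j<n}x_j\le 0$ only license moves that stay inside the sorted region: you may raise $x_1$ until it meets $x_2$ and lower $x_n$ until it meets $x_{n-1}$, and then the iteration stalls. For $n\le 3$ this already reaches the diagonal, but for $n\ge 4$ it need not: consider $x=(b,b,c,c)$ with $1\le b<c\le r$. Raising a single smallest coordinate exits the sorted region; by symmetry of the true error this move has derivative $r^{n-2}-\prod_{j\ne 2}x_j = r^{2}-bc^{2}$ (for $n=4$), which is negative when $b,c$ are near $r$, so the true error strictly decreases. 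Raising the tied block $\{x_1,x_2\}$ together is not covered by your inequalities either, and its derivative $(r^{3}+r^{2})-2bc^{2}$ is likewise negative for $b,c$ near $r$. So ``iterating drives any point to the diagonal without decreasing $e$'' is false as stated; to repair it you would have to show that at every non-diagonal block configuration some move toward the diagonal is error-nondecreasing, a nontrivial claim you do not establish. The paper avoids paths entirely and instead argues at a maximizer $x^{\ast}$: assuming no maximizer lies on the diagonal, pick an index $i$ with $x^{\ast}_{(i)}<x^{\ast}_{(i+1)}$ and compare the error at $x^{\ast}$ with the errors at the two points obtained by the \emph{discrete} exchanges $x_{(i)}\mapsto x^{\ast}_{(i+1)}$ and $x_{(i+1)}\mapsto x^{\ast}_{(i)}$ (both remain sorted even in the presence of ties elsewhere). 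Maximality then sandwiches $\prod_j x^{\ast}_j$ between $r^{n-i}x^{\ast}_{(i)}$ and $r^{n-i-1}x^{\ast}_{(i+1)}$, forcing $x^{\ast}_{(i)}=1$ and $x^{\ast}_{(i+1)}=r$, hence $x^{\ast}\in\{1,r\}^{n}$, where the error is zero --- contradicting the fact that the maximum error is at least the value $\E>0$ obtained from Lemma~\ref{lem:lowerbd}. Your concave-side argument needs to be replaced by an argument of this type (or your exchange moves strengthened to handle tied blocks) before the proof is complete.
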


We conjecture that $\D\le\E$ for all $r,n$ and provide a strong empirical evidence in support of this claim. We prove this conjecture to be asymptotically true by showing that $\lim_{n\to\infty}\D/\E \le 1/e$.

For $\S=\Hnegoo$, we characterize the convex hull in Theorem~\ref{thm:convminusplus} and show that it has the following errors.
\begin{theorem}\label{thm:converr11}
For $\f[m](x)$ over $\Hnegoo$, \[\errcav[\Hnegoo][{\f[m]}] = \errvex[\Hnegoo][{\f[m]}] = \errconv[\Hnegoo][{\f[m]}] = 1+\left(\frac{n-2}{n}\right)^{n}.\] 
This maximum error is attained at all the $2^{n}$ reflections of the point $(\frac{n-2}{n}\onevec,-1)$. 
\end{theorem}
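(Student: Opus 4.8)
The plan is to reduce all three quantities to a single one‑dimensional optimization. Since $\conv{\graph[\Hnegoo][{\f[m]}]}$ is, fiberwise over each $x\in\Hnegoo$, the vertical segment $[\cvxenv{\f[m]}{\Hnegoo}(x),\concenv{\f[m]}{\Hnegoo}(x)]$ and $\f[m](x)$ lies inside it, the error of the convex hull splits as $\errconv[\Hnegoo][{\f[m]}]=\max\{\errvex[\Hnegoo][{\f[m]}],\errcav[\Hnegoo][{\f[m]}]\}$. Next I would exploit the reflection $(x,w)\mapsto(R_ix,-w)$, where $R_i$ flips the sign of the $i$‑th coordinate: it maps $\graph[\Hnegoo][{\f[m]}]$ onto itself, carries the graph of $\cvxenv{\f[m]}{\Hnegoo}$ bijectively onto the graph of $\concenv{\f[m]}{\Hnegoo}$, and preserves the integrand $\abs{w-\f[m]}$ pointwise, so $\errvex[\Hnegoo][{\f[m]}]=\errcav[\Hnegoo][{\f[m]}]$. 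Hence it suffices to compute $E:=\max_{x\in\Hnegoo}\bigl(\f[m](x)-\cvxenv{\f[m]}{\Hnegoo}(x)\bigr)$ and show $E=1+(\tfrac{n-2}{n})^{n}$; all three claimed equalities follow.

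To evaluate $E$, I would use the full signed‑reflection symmetry to push the maximization onto the nonnegative orthant. Writing $g(x):=\f[m](x)-\cvxenv{\f[m]}{\Hnegoo}(x)$ and flipping exactly the negative coordinates of a given $x$ to obtain a point $x^{+}\ge\zerovec$, one checks that an even number of flips leaves $g$ unchanged, while an odd number turns $g(x)$ into the concave gap $\concenv{\f[m]}{\Hnegoo}(x^{+})-\f[m](x^{+})$. Consequently $E=\max\{\sup_{x\ge\zerovec}g(x),\ \sup_{x\ge\zerovec}(\concenv{\f[m]}{\Hnegoo}(x)-\f[m](x))\}$. The concave term is dispatched immediately: for $x\ge\zerovec$ we have $\f[m](x)\ge0$ and $\concenv{\f[m]}{\Hnegoo}(x)\le\fmax[\Hnegoo]=1$, so that gap never exceeds $1$, which is strictly below the target value.

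The crux is therefore $\sup_{x\ge\zerovec}g(x)$, and here I would reduce to the diagonal. Because $\cvxenv{\f[m]}{\Hnegoo}$ is convex and invariant under coordinate permutations (being the envelope of a symmetric function over a symmetric domain), Jensen's inequality gives $\cvxenv{\f[m]}{\Hnegoo}(\bar t\onevec)\le\cvxenv{\f[m]}{\Hnegoo}(x)$ for $\bar t=\tfrac1n\sum_j x_j$, while AM--GM gives $\f[m](x)=\prod_j x_j\le\bar t^{\,n}=\f[m](\bar t\onevec)$ on the nonnegative orthant; adding these shows $g(\bar t\onevec)\ge g(x)$, with equality only when all coordinates coincide. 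It then remains to maximize $g(t\onevec)=t^{n}-\cvxenv{\f[m]}{\Hnegoo}(t\onevec)$ over $t\in[0,1]$. Using the convex‑hull description of Theorem~\ref{thm:convminusplus} — or, directly, pairing the valid linear underestimator $\f[m](x)\ge\sum_j x_j-(n-1)$ (its multilinear slack is nonnegative at every vertex of $\Hnegoo$, hence on all of $\Hnegoo$) with explicit vertex combinations — one gets $\cvxenv{\f[m]}{\Hnegoo}(t\onevec)=-1$ for $t\in[0,\tfrac{n-2}{n}]$ and $\cvxenv{\f[m]}{\Hnegoo}(t\onevec)=nt-(n-1)$ for $t\in[\tfrac{n-2}{n},1]$. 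A one‑variable check (the gap increases then decreases, with a unique maximum at $t=\tfrac{n-2}{n}$) yields $\sup_{x\ge\zerovec}g(x)=1+(\tfrac{n-2}{n})^{n}$, which dominates the concave bound, so $E=1+(\tfrac{n-2}{n})^{n}$.

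Finally I would pin down the maximizers. The point $\tfrac{n-2}{n}\onevec$ is exactly the centroid of the $n$ vertices of $\Hnegoo$ carrying a single $-1$, namely $\onevec-2\onevec[i]$, each with $\f[m]=-1$; thus $\cvxenv{\f[m]}{\Hnegoo}(\tfrac{n-2}{n}\onevec)=-1$ and $(\tfrac{n-2}{n}\onevec,-1)$ realizes the error. Applying the $2^{n}$ signed reflections (each flipping the signs of a subset of the $x$‑coordinates and adjusting $w$ accordingly) produces all the claimed attainment points, while the strictness in AM--GM together with the strict one‑variable maximum shows there are no others. The main obstacle is the third paragraph: moving the maximization off the full box and onto the diagonal needs both the symmetry/AM--GM reduction and a correct pinning of $\cvxenv{\f[m]}{\Hnegoo}$ along the diagonal, and the scheme only closes because the concave gap on the nonnegative orthant is trivially small, making the diagonal convex gap the binding quantity.
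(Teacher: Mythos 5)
Your proposal is correct, and its computational core coincides with the paper's: both arguments reduce to the nonnegative orthant via sign-flip symmetry, dispose of the concave gap there by the trivial bound $\concenv{\f[m]}{\Hnegoo}(x)-\f[m](x)\le \fmax[\Hnegoo] = 1$, and locate the maximum of the convex gap by a one-variable breakpoint computation at $t=\tfrac{n-2}{n}$, with attainment propagated to the $2^{n}$ points by the same symmetry. The genuine difference is how the envelope information enters. The paper evaluates $\cvxenv{\f[m]}{\Hnegoo}$ on $x\ge\zerovec$ by specializing its hull description (Theorem~\ref{thm:convminusplus}), and then substitutes $t=\left(\prod_{j}x_{j}\right)^{1/n}$ via the arithmetic--geometric means inequality to reach the univariate problem $\max_{t\in[0,1]}\min\{t^{n}-nt+n-1,\;t^{n}+1\}$. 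You never invoke Theorem~\ref{thm:convminusplus}: you obtain the upper bound on the gap from two directly verified linear underestimators ($w\ge -1$ and $w\ge \sum_{j}x_{j}-(n-1)$, valid because their multilinear slack is nonnegative at the vertices of the box), and attainment from the explicit convex combination of the vertices $\onevec-2\onevec[i]$, whose centroid gives $(\tfrac{n-2}{n}\onevec,-1)\in\conv{\graph[\Hnegoo][{\f[m]}]}$; you also pass to the diagonal by Jensen's inequality applied to the permutation-invariant envelope, rather than by substituting the means inequality inside the envelope formula. This buys self-containedness --- the error theorem no longer depends on the hull characterization, which is a separate substantial result of the paper --- at the modest cost of justifying the permutation symmetry of the envelope. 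Your opening observation that a single-coordinate reflection exchanges the two envelopes, so that $\errvex[\Hnegoo][{\f[m]}]=\errcav[\Hnegoo][{\f[m]}]$ holds a priori, is also a cleaner route to the equality of the two envelope errors than the paper's terse closing remark to the same effect.
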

The exact description of the reflected points will be provided when we prove this theorem. Taking $n\to\infty$, this error approaches $1 + 1/e^{2}$ from below.

\subsubsection{Outline}
Our analysis begins with some preliminaries on the error measure. We observe that the error scales with the box and present a lower bound on the error, which we remark is also the proposed upper bound for the two cases $\S=\Hzone$ and $\S=\Honer$. We also formally note the intuition that the convex hull error can be computed as the maximum of the two envelope errors, due to which our error analysis in the remainder of the paper involves analyzing the concave envelope and  the convex envelope separately. \textsection\ref{sec:concerr} and \textsection\ref{sec:cvxerr} analyze these errors for a general monomial $\monom$ over $\S\subseteq\Hzone$. The main error bounds presented in \textsection\ref{sec:mainmonom} are proved in \textsection\ref{sec:converr} and we compare them to those from literature in \textsection\ref{sec:compare}. The multilinear monomial over $\Honer$ and $\Hnegoo$ is analyzed in \textsection\ref{sec:err1r} and \textsection\ref{sec:err11}.

\section{Preliminaries on $\err{\cdot}$}
The error defined in \eqref{eq:err} is obviously monotone with respect to set inclusion: $\err{X_{1}} \le \err{X_{2}}$ for any $X_{1}\subseteq X_{2}$. This enables us to upper bound the convex hull error by using $\err{\conv{\graphmonom}} \le \err{X}$ for any convex relaxation $X$ of $\graphmonom$, and also implies that the convex hull error over a smaller variable domain is upper bounded by the convex hull error over a larger domain. Another property we observe is that computing the convex hull error is equivalent to computing the error due to the convex envelope $\cvxenv{\f}{\S}$ and that due to the concave envelope $\concenv{\f}{\S}$. This intuitively seems correct given the  well-known fact that $\conv{\graphmonom} = \{(\bx,w)\in\S\times\real\mid \cvxenv{\f}{\S}(x) \le w \le \concenv{\f}{\S}(x)\}$, and the fact that the monomial convexification and envelope errors are
\begin{equation*}\begin{split}
&\errconv =\max_{(x,w)\in\conv{\graphmonom}}\,\abs{w - \monom} , \quad \errvex = \max_{x\in\S}\,\monom - \cvxenv{\f}{\S}(x), \\
& \errcav = \max_{x\in\S}\,\concenv{\f}{\S}(x) - \monom.
\end{split}\end{equation*}

\begin{observation}\label{obs:errmaxenv}
Let $X := \{(x,w)\in\S\times\real\mid f_{1}(x) \le w \le f_{2}(x) \}$, where $f_{1}$ and $f_{2}$ are, respectively, convex and concave continuous functions with $f_{1}(x) \le \monom \le f_{2}(x)$ for all $x\in\S$. Then \[
\err{\conv{\graphmonom}} \le \err{X} = \max\left\{\err{\graph[\S][f_{1}]} , \,\err{\graph[\S][f_{2}]} \right\}, 
\] and equality holds if $f_{1} = \cvxenv{\f}{\S}$ and $f_{2}=\concenv{\f}{\S}$.
\end{observation}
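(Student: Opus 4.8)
The plan is to establish the three pieces in order: the inclusion $\conv{\graphmonom}\subseteq X$ (which yields the inequality $\errconv\le\err{X}$ through the set-monotonicity of $\err{\cdot}$ noted above), then the exact evaluation $\err{X}=\max\{\err{\graph[\S][f_{1}]},\err{\graph[\S][f_{2}]}\}$ by a pointwise-in-$x$ decomposition of the inner maximum over $w$, and finally the equality clause by invoking the standard envelope characterization of $\conv{\graphmonom}$.

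First I would check that $X$ is convex. Since $\S$ is convex, $f_{1}$ is convex, and $f_{2}$ is concave, the set $X=\{(x,w)\in\S\times\real\mid f_{1}(x)\le w\le f_{2}(x)\}$ is the intersection of $\S\times\real$ with the epigraph of $f_{1}$ and the hypograph of $f_{2}$, all convex, and is therefore convex. The hypothesis $f_{1}(x)\le\monom\le f_{2}(x)$ for every $x\in\S$ says precisely that $(x,\monom)\in X$ for all $x\in\S$, i.e. $\graphmonom\subseteq X$. Convexity of $X$ then upgrades this to $\conv{\graphmonom}\subseteq X$, and monotonicity of $\err{\cdot}$ with respect to set inclusion gives $\errconv\le\err{X}$.

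For the evaluation of $\err{X}$, I would fix $x\in\S$ and first maximize the objective $\abs{w-\monom}$ over $w\in[f_{1}(x),f_{2}(x)]$. Because $f_{1}(x)\le\monom\le f_{2}(x)$, the signed quantity $w-\monom$ ranges over the interval $[f_{1}(x)-\monom,\,f_{2}(x)-\monom]$, which straddles $0$; hence $\abs{w-\monom}$ is maximized at one of the two endpoints, giving $\max\{\monom-f_{1}(x),\,f_{2}(x)-\monom\}$. Taking the maximum over $x\in\S$ and splitting the outer maximum across the two branches yields $\err{X}=\max\{\max_{x\in\S}(\monom-f_{1}(x)),\,\max_{x\in\S}(f_{2}(x)-\monom)\}$, and the sandwiching $f_{1}\le\monom\le f_{2}$ lets the absolute values drop so that the two inner maxima are exactly $\err{\graph[\S][f_{1}]}$ and $\err{\graph[\S][f_{2}]}$. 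Finally, when $f_{1}=\cvxenv{\f}{\S}$ and $f_{2}=\concenv{\f}{\S}$, the well-known identity $\conv{\graphmonom}=\{(x,w)\in\S\times\real\mid\cvxenv{\f}{\S}(x)\le w\le\concenv{\f}{\S}(x)\}$ recalled above makes $X=\conv{\graphmonom}$ exactly, so the inclusion becomes equality and thus $\errconv=\err{X}$.

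I do not anticipate a serious obstacle here; the statement is essentially bookkeeping once convexity of $X$ is secured. The one step that deserves a little care is the endpoint argument for the inner maximum, where one must use that the interval $[f_{1}(x)-\monom,\,f_{2}(x)-\monom]$ contains $0$ so that the extreme value of $\abs{\cdot}$ is attained at an endpoint rather than in the interior. This is exactly where the hypothesis $f_{1}(x)\le\monom\le f_{2}(x)$ is doing its work, and it is also what makes the continuity assumptions on $f_{1},f_{2}$ (together with compactness of $\S$) enough to guarantee the maxima are attained.
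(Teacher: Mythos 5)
Your proof is correct. The paper does not actually spell out an argument for this observation (it states ``The proof is straightforward and is left to the reader''), and your write-up is precisely the intended bookkeeping: convexity of $X$ via the epigraph/hypograph intersection, $\graphmonom\subseteq X$ from the sandwich hypothesis plus monotonicity of $\err{\cdot}$, the pointwise-in-$x$ endpoint argument for the inner maximum over $w$, and the envelope characterization of $\conv{\graphmonom}$ for the equality clause. Nothing is missing; your attention to attainment of the maxima (compactness of $\S$ and continuity of $f_{1},f_{2}$) is exactly the right level of care.
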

The proof is straightforward and is left to the reader. Based on this observation, our error analysis in the rest of the paper involves analyzing the concave envelope and  the convex envelope separately.

A third and final  property we note is that the error scales with the box. For $\x{\c}\in\real^{n}_{\neq 0}$, \[\scaledbox := \{\bx\in\real^{n}\mid \c_{j}\L_{j}\le x_{j}\le \c_{j}\U_{j} \ \forall j \text{ s.t. } \c_{j}>0, \; \c_{j}\L_{j}\ge x_{j}\ge \c_{j}\U_{j} \ \forall j \text{ s.t. } \c_{j}<0 \}\] is the coordinate-wise  scaled version of $\H$. The bijective linear map $\mathscr{D}(\bx) := (\c_{1}x_{1},\ldots,\c_{n}x_{n})$ gives us the relation $\scaledbox=\mathscr{D}(\H)$. Denote $\monom[][\c] := \monomexp[][\c]$.

\begin{observation}\label{obs:errscale}
For any $\c\in\real^{n}_{\neq 0}$, we have $\err{\conv{\graphmonom[{\scaledbox}]}} = \abs{\monom[][\c]}\err{\conv{\graphmonom[\H]}}$, with $(x,w)$ being optimal to $\err{\conv{\graphmonom[\H]}}$ if and only if $(\mathscr{D}(x),\monom[][\c]w)$ is optimal to $\err{\conv{\graphmonom[{\scaledbox}]}}$.
%
\end{observation}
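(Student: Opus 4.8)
The plan is to prove both claims at once by exhibiting a single linear bijection of the lifted $(x,w)$-space that carries the graph over $\H$ onto the graph over $\scaledbox$ while scaling the error objective by the constant factor $\abs{\monom[][\c]}$. Concretely, I would introduce $T\colon\real^{n}\times\real\to\real^{n}\times\real$ defined by $T(x,w) := (\mathscr{D}(x),\,\monom[][\c]\,w)$. Since every $\c_{j}\neq 0$, we have $\monom[][\c] = \prod_{j=1}^{n}\c_{j}^{\power_{j}}\neq 0$, so $T$ is a linear (hence affine) bijection whose inverse applies $\mathscr{D}^{-1}$ to the first $n$ coordinates and divides the last coordinate by $\monom[][\c]$.

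The first step is to record how the monomial transforms under coordinatewise scaling. Writing $y = \mathscr{D}(x)$, so $y_{j} = \c_{j}x_{j}$, one gets $\f(y) = \prod_{j=1}^{n}(\c_{j}x_{j})^{\power_{j}} = \monom[][\c]\,\f(x)$. This single identity drives the whole argument. It shows first that $T$ maps $\graphmonom[\H]$ onto $\graphmonom[{\scaledbox}]$: if $w = \f(x)$ with $x\in\H$, then $\mathscr{D}(x)\in\mathscr{D}(\H) = \scaledbox$ and $\monom[][\c]\,w = \monom[][\c]\,\f(x) = \f(\mathscr{D}(x))$, so $T(x,w)\in\graphmonom[{\scaledbox}]$; bijectivity of $T$ together with $\mathscr{D}\colon\H\to\scaledbox$ being a bijection supplies the reverse inclusion. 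Because $T$ is affine it commutes with the convex-hull operator, so $T(\conv{\graphmonom[\H]}) = \conv{\graphmonom[{\scaledbox}]}$.

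The second step is to transport the error objective through $T$. Writing $(y,v) = T(x,w)$, the same identity gives $\abs{v - \f(y)} = \abs{\monom[][\c]\,w - \monom[][\c]\,\f(x)} = \abs{\monom[][\c]}\,\abs{w - \f(x)}$. Maximizing over $(x,w)\in\conv{\graphmonom[\H]}$, which by the first step is exactly the same as maximizing over $(y,v)\in\conv{\graphmonom[{\scaledbox}]}$ (both are maxima of a continuous function over a compact set, hence attained), yields $\err{\conv{\graphmonom[{\scaledbox}]}} = \abs{\monom[][\c]}\,\err{\conv{\graphmonom[\H]}}$. Since $\abs{\monom[][\c]}>0$ is a fixed constant, the right-hand objective is maximized at $(x,w)$ precisely when the left-hand objective is maximized at $T(x,w) = (\mathscr{D}(x),\monom[][\c]\,w)$, which is the asserted optimality correspondence.

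This proof is essentially mechanical and I do not expect a genuine obstacle. The only points that warrant care are the sign of $\monom[][\c]$, which may be negative when some $\c_{j}<0$ and the corresponding $\power_{j}$ is odd (hence the absolute value throughout), and the standard fact that an affine bijection commutes with $\conv{\cdot}$. The substantive content reduces entirely to the transformation rule $\f(\mathscr{D}(x)) = \monom[][\c]\,\f(x)$ for a monomial under coordinatewise scaling.
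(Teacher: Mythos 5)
Your proof is correct, and it is exactly the argument the paper intends for this observation (which the paper states without proof): the affine bijection $T(x,w) = (\mathscr{D}(x),\,\monom[][\c]\,w)$ maps $\graphmonom[\H]$ onto $\graphmonom[{\scaledbox}]$, commutes with the convex-hull operator, and scales the objective $\abs{w-\monom}$ by the positive constant $\abs{\monom[][\c]}$, which yields both the error identity and the optimality correspondence at once. The two points you flag as needing care (the sign of $\monom[][\c]$ and the fact that affine bijections commute with $\conv{\cdot}$) are handled correctly.
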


Observation~\ref{obs:errscale} allows us to focus on boxes with specific bounds $\L_{j}$ and $\U_{j}$, and to then extend to slightly more general boxes via scalings.  In particular, error results for 
\begin{itemize}
\item $\Hzone$ scale to any box having a vertex at $\zerovec$,
\item $\Honer$ scale to any box for which the ratio between lower and upper bounds is the same positive scalar in each coordinate, and 
\item $\Hnegoo$ scale to any box that is symmetric with respect to $\zerovec$.
\end{itemize}

Finally, we observe a lower bound on $\errcav$, and hence on $\err{\conv{\graphmonom}}$, when $\S$  contains two points on the ray $\{t\onevec\mid t \ge 0 \}$, which happens for example when $\S = \Hzone[t_{1}][t_{2}]$ for some $t_{1} < t_{2}$ with $t_{2} > 0$.

\begin{lemma}\label{lem:lowerbd}
Suppose $\S\cap \{t\onevec\mid t \ge 0 \} \neq\emptyset$ and let $t_{1},t_{2}\ge 0$ be the minimum and maximum values such that $t_{1}\onevec,t_{2}\onevec\in\S$. Let $\tilde{f}$ be a concave overestimator of $\monom$ on $\S$ and let $X$ be a convex relaxation of $\graphmonom$. Then, $\err{\graph[\S][\tilde{f}]} \ge \phi(\xi^{\prime})$ and $\err{X} \ge \phi(\xi^{\prime})$, where $\phi\colon \xi\in[0,1] \mapsto t_{1}^{\degree} + (t_{2}^{\degree}-t_{1}^{\degree})\xi - (t_{1} + (t_{2}-t_{1})\xi)^{\degree}$ and 
\[\xi^{\prime} = \left(\frac{t_{2}^{\degree}-t_{1}^{\degree}}{\degree}\right)^{\frac{1}{\degree-1}}(t_{2}-t_{1})^{\frac{\degree}{1-\degree}} \;-\; \frac{t_{1}}{t_{2}-t_{1}}.
\] 
\end{lemma}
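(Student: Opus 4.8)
The plan is to restrict attention to the diagonal segment joining $t_{1}\onevec$ and $t_{2}\onevec$, on which the $n$-variate monomial collapses to a univariate power, and to lower bound the error at points of this segment. Parameterize it by $x(\xi):=(t_{1}+(t_{2}-t_{1})\xi)\onevec$ for $\xi\in[0,1]$, so that $x(0)=t_{1}\onevec$, $x(1)=t_{2}\onevec$, and $x(\xi)\in\S$ by convexity of $\S$. Because all coordinates of $x(\xi)$ coincide, $\f(x(\xi))=(t_{1}+(t_{2}-t_{1})\xi)^{\sum_{j}\power_{j}}=(t_{1}+(t_{2}-t_{1})\xi)^{\degree}$, and in particular $\f(t_{1}\onevec)=t_{1}^{\degree}$ and $\f(t_{2}\onevec)=t_{2}^{\degree}$. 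I will show that at $x(\xi)$ both $\err{\graph[\S][\tilde{f}]}$ and $\err{X}$ are at least $\phi(\xi)$, and then finish by maximizing $\phi$ over $[0,1]$.

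For the concave overestimator $\tilde{f}$, concavity along the segment gives $\tilde{f}(x(\xi))\ge(1-\xi)\tilde{f}(t_{1}\onevec)+\xi\,\tilde{f}(t_{2}\onevec)$, and since $\tilde{f}$ overestimates the monomial on $\S$ this is at least $(1-\xi)t_{1}^{\degree}+\xi t_{2}^{\degree}=t_{1}^{\degree}+(t_{2}^{\degree}-t_{1}^{\degree})\xi$. Subtracting $\f(x(\xi))$ yields $\tilde{f}(x(\xi))-\f(x(\xi))\ge\phi(\xi)$, so $\err{\graph[\S][\tilde{f}]}\ge\phi(\xi)$ for all $\xi\in[0,1]$. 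For the convex relaxation $X$, both graph points $(t_{1}\onevec,t_{1}^{\degree})$ and $(t_{2}\onevec,t_{2}^{\degree})$ lie in $\graphmonom\subseteq X$, so convexity places the entire chord in $X$; its point with parameter $\xi$ is $(x(\xi),(1-\xi)t_{1}^{\degree}+\xi t_{2}^{\degree})\in X$, whose deviation from the monomial value is exactly $\abs{\phi(\xi)}$. Hence $\err{X}\ge\abs{\phi(\xi)}$ for all $\xi\in[0,1]$.

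It remains to maximize $\phi$ on $[0,1]$ and to identify the maximizer as $\xi'$. One checks $\phi(0)=\phi(1)=0$ and $\phi''(\xi)=-\degree(\degree-1)(t_{2}-t_{1})^{2}(t_{1}+(t_{2}-t_{1})\xi)^{\degree-2}\le0$, so $\phi$ is concave; therefore $\phi\ge0$ on $[0,1]$ (which removes the absolute value in the bound for $X$) and its unique interior critical point is the global maximizer. Setting $\phi'(\xi)=(t_{2}^{\degree}-t_{1}^{\degree})-\degree(t_{2}-t_{1})(t_{1}+(t_{2}-t_{1})\xi)^{\degree-1}=0$ and solving for $\xi$ gives, after the exponent simplification $(t_{2}-t_{1})^{-1}(t_{2}-t_{1})^{-1/(\degree-1)}=(t_{2}-t_{1})^{\degree/(1-\degree)}$, exactly the stated $\xi'$, which lies in $(0,1)$ by Rolle's theorem. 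Combining, $\err{\graph[\S][\tilde{f}]}\ge\phi(\xi')$ and $\err{X}\ge\phi(\xi')$, as claimed.

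The only genuinely technical point is this last step: the simplification of the critical-point equation into the closed form for $\xi'$ together with the verification $\xi'\in(0,1)$, so that the interior critical point rather than an endpoint attains the maximum. Everything else follows immediately from concavity of $\tilde{f}$ and of $\phi$ and from the collapse of the monomial to $t^{\degree}$ on the diagonal. Throughout I assume implicitly $\degree\ge2$ and $t_{2}>t_{1}$; in the degenerate cases $\phi\equiv0$ and the bound is trivial.
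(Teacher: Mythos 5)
Your proof is correct and follows essentially the same route as the paper's: restrict to the diagonal segment, use convexity of $X$ to place the chord $(x(\xi),(1-\xi)t_{1}^{\degree}+\xi t_{2}^{\degree})$ inside $X$, use concavity plus overestimation for $\tilde{f}$, and then maximize the concave function $\phi$ via Rolle's theorem to identify $\xi'$. The only differences are cosmetic: you verify concavity of $\phi$ and the closed form of $\xi'$ explicitly, whereas the paper asserts them, and you drop the absolute value via $\phi\ge 0$ while the paper invokes convexity of $t\mapsto t^{\degree}$ — the same fact in different words.
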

\begin{proof}
The assumption $t_{1}\onevec,t_{2}\onevec\in\S$ implies $(t_{1}\onevec,t_{1}^{\degree}) , (t_{2}\onevec,t_{2}^{\degree})\in\graphmonom$. Convexity of $X$ and  $\graphmonom\subset X$ lead to $(((1-\xi)t_{1} + \xi t_{2})\onevec, (1-\xi)t_{1}^{\degree} + \xi t_{2}^{\degree}) \in X$ for all $\xi\in[0,1]$. Therefore
\[
\begin{split}
\err{X} \;\ge\; \max_{0\le\xi\le 1}\,\abs{(1-\xi)t_{1}^{\degree} + \xi t_{2}^{\degree} - ((1-\xi)t_{1} + \xi t_{2})^{\degree}} &\;=\; \max_{0\le\xi\le 1}\,  (1-\xi)t_{1}^{\degree} + \xi t_{2}^{\degree} - ((1-\xi)t_{1} + \xi t_{2})^{\degree}\\
&\;=\; \max_{0\le\xi\le 1}\, \phi(\xi),
\end{split}
\] 
where the equality is due to $t_{1},t_{2}\ge 0$ and convexity of the function $t\mapsto t^{\degree}$ on $\nonnegreal$. Since $\phi(0)=\phi(1)=0$, by Rolle's theorem, there exists a stationary point in $[0,1]$ and this point is exactly $\xi^{\prime}$ stated above. Since $\phi$ is concave, $\xi^{\prime}$ must be a maxima.
For a concave overestimator $\tilde{f}$, we have  \[
\begin{split}
\err{\graph[\S][\tilde{f}]} = \max_{x\in\S}\,\tilde{f}(x)-\monom &\;\ge\; \max_{0\le\xi\le 1}\tilde{f}((1-\xi)t_{1} + \xi t_{2}) - ((1-\xi)t_{1} + \xi t_{2})^{\degree}\\
 &\;\ge\; \max_{0\le\xi\le 1}(1-\xi)\tilde{f}(t_{1}) + \xi \tilde{f}(t_{2}) - ((1-\xi)t_{1} + \xi t_{2})^{\degree} \\
&\;\ge\; \max_{0\le\xi\le 1}(1-\xi)t_{1}^{\degree} + \xi t_{2}^{\degree} - ((1-\xi)t_{1} + \xi t_{2})^{\degree} \\
&\;=\;\phi(\xi^{\prime}).\qedhere
\end{split}
\] 
\end{proof}
 
\begin{remark}\label{rem:lowerbd}
For lower bounding $\err{X}$, the above proof really only requires $(t_{1}\onevec,t_{1}^{\degree}) , (t_{2}\onevec,t_{2}^{\degree})\in X$. The stronger assumption $t_{1}\onevec,t_{2}\onevec\in\S$ is made for convenience.
\end{remark}

\begin{remark}
The above method of lower bounding the error can also be utilized by considering arbitrary $l,u\in\S$ with $\zerovec \le l\le u$. This generalization is made possible by the observation that the function $\xi\mapsto \prod_{j=1}^{n}(l_{j}+(u_{j}-l_{j})\xi)^{\power_{j}}$ is convex over $\nonnegreal$. Since the derivation gets extremely tedious and does not yield new insight, we omit the general case here.
\end{remark}

Substituting $t_{1}=0,t_{2}=1$ in Lemma~\ref{lem:lowerbd} yields the critical point to be $\xi^{\prime}=(1/\degree)^{1/(\degree-1)}$ so that $\phi(\xi^{\prime}) = \xi^{\prime} - {\xi^{\prime}}^{\degree} = \xi^{\prime}(1 - {\xi^{\prime}}^{\degree-1}) = (1/\degree)^{\frac{1}{\degree-1}}( 1 - 1/\degree ) = \erropt$, where the constant $\erropt$ was introduced in equation~\eqref{eq:C}. Thus the significance of the lower bound  from this lemma is that we prove in Theorem~\ref{thm:converr01} that it is indeed equal to the maximum error of the convex hull when $\{\zerovec,\onevec\}\subset\S\subseteq\Hzone$. For a multilinear monomial over $\S=\Honer$ for some $r>1$, or equivalently $\S=\Hzone[\frac{1}{r}][1]$ using the scaling from Observation~\ref{obs:errscale}, the constant $\E$ defined in the statement of Theorem~\ref{thm:converr1r} is exactly the lower bound obtained from Lemma~\ref{lem:lowerbd} by substituting $t_{1}=1,t_{2}=r$ and we prove that this is the maximum concave envelope error and conjecture, with strong empirical evidence in support, that it is also the maximum convex hull error.

\newcommand{\B}{\mathcal{B}}
\newcommand{\K}[1][\B]{\mathscr{K}(#1)}
\newcommand{\bb}{\kappa}
\newcommand{\bbmax}{\bb^{\ast}}
\newcommandx{\ratio}[2][1=\beta,2=\bb,usedefault]{r(#1,#2)}

\section{Monomial over $\Hzone$} \label{sec:err01}
This section considers a general multivariate monomial $\monom$, for some $\power\in\posint^{n}$, over a nonempty compact convex set $\S\subseteq\Hzone$. It follows that $\graphmonom\subseteq[0,1]^{n+1}$. Our main error bounds on $\err{\conv{\graphmonom}}$ depend only on the degree $\degree:=\sum_{j=1}^{n}\power_{j}$ of the monomial and therefore are independent of how the monomial behaves on its domain $\S$. However, en route to deriving these formulas, we establish tighter bounds that depend on the minimum and maximum value of $\monom$ over $\S$ and thus are expensive to compute in general. The error formulas for the multilinear case will follow after substituting $\power=\onevec$. Motivated by Observation~\ref{obs:errmaxenv}, we bound the convex hull error by bounding the envelope errors separately. 

Before we begin, we recall that the envelopes of $\f[m](x)$ were shown by \citet{crama1993concave} to be 
\begin{subequations}
\begin{equation}\label{eq:crama}
\cvxenv{\f[m]}{\Hzone}(x) = \max\left\{0, 1 + \sum_{j=1}^{n}(x_{j}-1) \right\}, \quad \concenv{\f[m]}{\Hzone}(x) = \min_{j=1,\dots,n}x_{j}.
\end{equation}
\begin{remark}
The envelopes of $\f[m](x)$ over a box $\H$ having one of its vertices at the origin, i.e., $\L_{j}\U_{j} = 0$ for all $j$, can be obtained by scaling the variables in \eqref{eq:crama} as $x_{j}\leftarrow\U_{j}x_{j}$ for $j\in J_{1}:= \{j\mid \L_{j}=0\}$, $x_{j}\leftarrow\L_{j}x_{j}$ for $j\in J_{2}:= \{j\mid \U_{j}=0\}$, and $w_{j}\leftarrow w \prod_{j\in J_{1}}\U_{j}\prod_{j\in J_{2}}\L_{j}$.
\end{remark}
The concave envelope in \eqref{eq:crama} is also the concave envelope of $\monom$ over $\Hzone$ for every $\power\ge\onevec$, i.e.
\begin{equation}\label{eq:crama2}
\concenv{\f}{\Hzone}(x) = \min_{j=1,\dots,n}x_{j}.
\end{equation}
\end{subequations}
This is  because $\f(x)=\f[m](x)$ for $x\in\{0,1\}^{n}$ and a monomial $\monom$ with $\power\ge\onevec$ is known to be concave-extendable from the vertices of $\Hzone$ (meaning that $\concenv{\f}{\Hzone}$ can be obtained by looking at the values of $\f(x)$ solely at $\{0,1\}^{n}$); see \citep{tawarmalani2002convex}. One can also establish this fact independently without using concave-extendability of $\f(x)$.


For notational convenience throughout this section, we denote \[ E_{0}:= \{\bx\in\Hzone\mid x_{i}=0 \text{ for some } i\}, \quad E_{j} := \conv{\{\onevec,\onevec-\onevec[j]\}}, \ \, j=1,\ldots,n.\] That is, $E_{0}$ is the union of all the coordinate plane facets of $\Hzone$ and  $E_{j}$ is the $j^{th}$ edge of $\Hzone$ that is incident to the vertex $\onevec$.

\subsection{Concave overestimator error}\label{sec:concerr}
Throughout, we consider the piecewise linear concave  function $\concf(x) := \min_{j=1\dots n}\,x_{j}$, which we noted in \eqref{eq:crama2} to be the concave envelope of $\f(x)$ over $\Hzone$. \blue{First, we treat the general case where $\S$ is any subset of $\Hzone$, and later we consider the case of $\S$ being a standard simplex.

\subsubsection{General case}}
For arbitrary $\S\subseteq\Hzone$, we have $\concenv{\f}{\S}(\cdot) \le \concf(\cdot)$ due to $\power\ge\onevec$ and $x\in\Hzone$ implying $0 \le \monom \le  x_{j}^{\power_{j}}\le x_{j}$ for all $j$. We observe that this overestimator is exact only on $E_{0}$ or on edges $E_{i}$'s along which the monomial is linear.

\begin{proposition}\label{prop:lowerbdconc}
$\concf(x) =\monom$ if and only if $x=\onevec$ or $x\in E_{0}$ or $x\in E_{i}$ for some $i$ with $\power_{i}=1$.
\end{proposition}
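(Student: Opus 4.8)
The plan is to keep the $\le$ direction that the surrounding text already records, namely $\monom\le\concf(x)$ on $\Hzone$, and to sharpen it into a two-step chain whose tightness I can analyze term by term. Concretely, for $x\in\Hzone$ I would write
\[
\monom=\prod_{j=1}^{n}x_{j}^{\power_{j}} \;\le\; \prod_{j=1}^{n}x_{j} \;\le\; \min_{j}x_{j}=\concf(x),
\]
where the first inequality uses $x_{j}^{\power_{j}}\le x_{j}$ for $x_{j}\in[0,1]$ and $\power_{j}\ge 1$, and the second uses that, on letting $i^{*}$ be an index attaining $\min_{j}x_{j}$, the leftover factors obey $\prod_{j\ne i^{*}}x_{j}\le 1$. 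Then $\concf(x)=\monom$ holds precisely when \emph{both} inequalities are equalities, and the whole proof reduces to reading off these two equality conditions.

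For sufficiency I would verify the three listed families by direct substitution. At $x=\onevec$ both sides equal $1$. On $E_{0}$ some $x_{i}=0$, so $\monom=0=\min_{j}x_{j}$. On $E_{i}$ with $\power_{i}=1$ we have $x_{j}=1$ for $j\ne i$ and $x_{i}\in[0,1]$, hence $\monom=x_{i}^{\power_{i}}=x_{i}=\min_{j}x_{j}$. This gives the ``if'' direction with essentially no computation.

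For necessity I would assume equality holds with $x\notin E_{0}$ and $x\neq\onevec$, so that $x_{j}>0$ for all $j$ while some coordinate is $<1$, and deduce $x\in E_{i}$ for an index with $\power_{i}=1$. Tightness of the first inequality forces $x_{j}^{\power_{j}}=x_{j}$ for every $j$, hence $\power_{j}=1$ for every coordinate with $x_{j}\in(0,1)$. Tightness of the second forces $\prod_{j\ne i^{*}}x_{j}=1$ with all factors in $(0,1]$, hence $x_{j}=1$ for all $j\ne i^{*}$; in particular $x_{i^{*}}<1$ (otherwise $x=\onevec$), so $x_{i^{*}}\in(0,1)$ since $x\notin E_{0}$. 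The first condition then yields $\power_{i^{*}}=1$, and the coordinate pattern ``$x_{j}=1$ for $j\ne i^{*}$ and $x_{i^{*}}\in(0,1)$'' is exactly membership in $E_{i^{*}}$, closing the argument.

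The delicate point, which I expect to be the only real obstacle, is the bookkeeping around the argmin index $i^{*}$ in the second step. One must treat $i^{*}$ consistently and note that the equality $\prod_{j\ne i^{*}}x_{j}=1$ automatically forces $i^{*}$ to be the \emph{unique} minimizer once $x\neq\onevec$ (any second coordinate equal to the minimum would be $<1$ and violate the product condition), and one must combine the two pointwise conditions without double-counting coordinate $i^{*}$. Everything outside this index management is a routine case check.
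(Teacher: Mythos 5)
Your proposal is correct and takes essentially the same approach as the paper: sufficiency by direct substitution in the three listed cases, and necessity by analyzing when the elementary chain $\monom \le \prod_{j}x_{j} \le \min_{j}x_{j} = \concf(x)$ is tight. The only difference is cosmetic --- the paper argues contrapositively, taking a point outside the three families, picking two coordinates $x_{i}\le x_{j}$ in $(0,1)$, and exhibiting the strict inequality $\monom \le x_{i}^{\power_{i}}x_{j}^{\power_{j}} < x_{i}$, whereas you force factorwise equality directly; both hinge on the same observation that a second coordinate strictly between $0$ and $1$ destroys equality.
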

\begin{proof}
For $\S\subseteq\Hzone$ and $\power\ge\onevec$, $\concf(x)\ge \monom$ follows from the facts  and $x_{i}^{\power_{i}}x_{j}^{\power_{j}}\le x_{i}^{\power_{i}} \ \forall i\neq j$. The equalities $\concf(\onevec)=\monom[][\onevec]=1$ and $\concf(x) = \monom = 0$, for all $x\in E_{0}$, are obvious. For any $x\in\relint E_{i}$, $x_{i} \in(0,1)$ and $x_{j}=1 \ \forall j\neq i$ give us $\concf(x) = x_{i}$ and  $f(x)=x_{i}^{\power_{i}}$. Thus it is obvious that for $x\in\relint E_{i}$, $\concf(x) = f(x)$ if and only if $\power_{i}=1$. Now let $x$ be any point in $\S$ that does not belong to a coordinate plane nor to any edge $E_{i}$. Then there exist distinct indices $i,j$ with $x_{i},x_{j}\in(0,1)$ and $ x_{i} \le x_{j} \le x_{k} \ \forall k\neq i,j$. Therefore $0 < \monom \le x_{i}^{\power_{i}}x_{j}^{\power_{j}} < x_{i} = \concf(x)$.
\end{proof}

Since $\concenv{\f}{\S}(\cdot)\le\concf(\cdot)$, the error due to $\concf$, which is the maximum value of the difference $\concf(x) - \monom$ over $\S$, provides an upper bound on the error from $\concenv{\f}{\S}$.  Proposition~\ref{prop:lowerbdconc} tells us that this maximum difference occurs either in the interior of $\Hzone$ or in the relative interior of some face of $\Hzone$ passing through $\onevec$. In the following result, we give a tight  upper bound on  $\concf(x) - \monom$  that is attained at a specific point on the diagonal between $\zerovec$ and $\onevec$. This is our main error bound for $\concenv{\f}{\S}$.

\begin{theorem}\label{thm:concub}
$\err{\graph[\S][\concf]} \le {\xi^{\prime}}^{1/\degree}-\xi^{\prime}$, where $\xi^{\prime}=\min\{\max\{\fmin,\degree^{\frac{\degree}{1-\degree}} \}, \fmax \}$. This bound can be attained only at the point ${\xi^{\prime}}^{1/\degree}\onevec\in\relint{\{\zerovec,\onevec\}}$ and hence is tight if and only if ${\xi^{\prime}}^{1/\degree}\onevec\in\S$. 
\end{theorem}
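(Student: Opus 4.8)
The plan is to control the pointwise gap $\concf(x)-\monom$ uniformly over $\S$ and then reduce everything to a scalar optimization. Since $\power\ge\onevec$ and $x\in\Hzone$ give $\monom\le x_{j}$ for every $j$, the function $\concf$ is a concave overestimator, so the absolute value in \eqref{eq:err} drops and $\err{\graph[\S][\concf]}=\max_{x\in\S}\big(\min_{j}x_{j}-\monom\big)$. Writing $t:=\min_{j}x_{j}=\concf(x)$ and $s:=\f(x)=\monom$, the first step I would establish is the inequality $t\le s^{1/\degree}$, valid for every $x\in\Hzone$. Indeed each coordinate satisfies $x_{j}\ge t\ge 0$ and $\power_{j}\ge 1$, so $x_{j}^{\power_{j}}\ge t^{\power_{j}}$ and hence $s=\prod_{j}x_{j}^{\power_{j}}\ge\prod_{j}t^{\power_{j}}=t^{\degree}$; moreover equality holds if and only if $x_{j}=t$ for all $j$, that is $x=t\onevec$. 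This gives $\concf(x)-\monom=t-s\le s^{1/\degree}-s$, so setting $\phi(s):=s^{1/\degree}-s$ we obtain $\err{\graph[\S][\concf]}\le\max_{x\in\S}\phi(\f(x))$.

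The second step is the one-dimensional maximization of $\phi$. In the nontrivial case $\degree\ge 2$, $\phi$ is strictly concave on $(0,\infty)$ because $\phi''(s)=\tfrac{1}{\degree}\big(\tfrac{1}{\degree}-1\big)s^{1/\degree-2}<0$, and its unique unconstrained stationary point, from $\phi'(s)=\tfrac{1}{\degree}s^{1/\degree-1}-1=0$, is $s^{\ast}=\degree^{\degree/(1-\degree)}$. Because $\f(x)\in[\fmin,\fmax]$ as $x$ ranges over $\S$, I restrict $\phi$ to this interval, so that $\max_{x\in\S}\phi(\f(x))\le\max_{s\in[\fmin,\fmax]}\phi(s)$. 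Strict concavity means the constrained maximizer is exactly the projection of $s^{\ast}$ onto $[\fmin,\fmax]$, i.e. the median value $\xi'=\min\{\max\{\fmin,s^{\ast}\},\fmax\}$ in the statement, and it is the unique maximizer there. This yields $\err{\graph[\S][\concf]}\le\phi(\xi')=(\xi')^{1/\degree}-\xi'$, the claimed bound.

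For the attainment and tightness claim, I would run the two inequalities backwards. If the bound is achieved at some $\hat{x}\in\S$, then $\phi(\f(\hat{x}))=\phi(\xi')$ forces $\f(\hat{x})=\xi'$ by uniqueness of the maximizer, while $\min_{j}\hat{x}_{j}-\f(\hat{x})=\phi(\f(\hat{x}))$ forces equality in $t\le s^{1/\degree}$ and hence $\hat{x}=t\onevec$. Combining the two gives $t^{\degree}=\xi'$, so the only candidate is $\hat{x}=(\xi')^{1/\degree}\onevec$. Since $s^{\ast}\in(0,1)$ together with $0\le\fmin\le\fmax\le 1$ force $\xi'\in(0,1)$ in the nondegenerate case, this point lies in $\relint\{\zerovec,\onevec\}$, and evaluating the gap there returns exactly $(\xi')^{1/\degree}-\xi'$; thus the bound is attained precisely when $(\xi')^{1/\degree}\onevec\in\S$.

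I do not anticipate a serious obstacle, as the argument is a clean two-step reduction: a monomial-versus-minimum inequality followed by a one-dimensional concave maximization. The two places that require care are verifying that equality in $s\ge t^{\degree}$ occurs only along the diagonal $x=t\onevec$ (already foreshadowed by Proposition~\ref{prop:lowerbdconc}), and correctly matching the constrained maximizer of $\phi$ with the clamped/median expression defining $\xi'$, including the boundary cases in which $s^{\ast}$ lies outside $[\fmin,\fmax]$ so that $\xi'$ equals $\fmin$ or $\fmax$.
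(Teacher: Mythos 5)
Your proposal is correct and takes essentially the same approach as the paper's proof: the same key inequality $\concf(x) \le (\monom)^{1/\degree}$ on $\Hzone$, the same reduction to maximizing the concave function $\xi \mapsto \xi^{1/\degree}-\xi$ over $[\fmin,\fmax]$ with the clamped point $\xi^{\prime}$ as unique maximizer, and the same attainment analysis forcing equality in both steps so that any maximizer must be the diagonal point $(\xi^{\prime})^{1/\degree}\onevec$. The only differences are cosmetic: the paper invokes the intermediate value theorem to state the scalar reduction as an equality and phrases the attainment step as a contradiction, neither of which changes the substance.
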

\begin{proof}
Since $x\in\Hzone$ and $\power\ge\onevec$, we have $\left(\min_{i} x_{i} \right)^{\degree} \le \monom \le \min_{i}x_{i}$. This implies $\concf(x) \le (\monom)^{\frac{1}{\degree}}$ for $x\in\Hzone$, which leads to 
\begin{equation}\label{eq:concub}
\max_{x\in\S}\,\concf(x) - \monom \;\le\; \max_{x\in\S}\,(\monom)^{\frac{1}{\degree}} - \monom.
\end{equation}
Since $f(x)$ is a continuous function with minimum and maximum values $\fmin$ and $\fmax$ on the closed convex set $\S$, the intermediate value theorem implies that \[\max_{x\in\S}\,(\monom)^{\frac{1}{\degree}} - \monom  = \max\, \{\xi^{\frac{1}{\degree}} - \xi \mid \fmin\le\xi\le \fmax\}.\]
We have $0 \le \fmin \le \fmax\le 1$ due to $\S\subseteq\Hzone$. Elementary calculus tells us that the function $\xi^{1/\degree} - \xi$ is concave on $[0,1]$ with a unique stationary point at $\xi_{0}=\degree^{\frac{\degree}{1-\degree}}$ and is increasing on $[0,\xi_{0})$ and decreasing on $(\xi_{0},1]$. Hence the  maximum value of this function on $[\fmin,\fmax]$ is ${\xi^{\prime}}^{1/\degree}-\xi^{\prime}$, where $\xi^{\prime}=\min\{\max\{\fmin,\degree^{\frac{\degree}{1-\degree}} \}, \fmax \}$. Combining this with \eqref{eq:concub} gives us the desired upper bound.

Now we claim that this bound can be tight only on $\relint{\{\zerovec,\onevec\}}$. Suppose this is not true and there exists a $y\in\S\setminus\relint{\{\zerovec,\onevec\}}$ such that $\concf(y)-\monom[][y] = {\xi^{\prime}}^{1/\degree}-\xi^{\prime}$. The fact that $\xi^{\prime}>0$ and $\degree\ge 2$ makes it obvious that $y\neq\zerovec,\onevec$. Thus $y\notin\conv{\{\zerovec,\onevec\}}$. Since $\concf(y) \le (\monom[][y])^{1/\degree}$ and ${\xi^{\prime}}^{1/\degree}-\xi^{\prime}$ is the maximum value of the right hand side in \eqref{eq:concub}, we have \[
{\xi^{\prime}}^{1/\degree}-\xi^{\prime} = \concf(y)-\monom[][y] \le (\monom[][y])^{1/\degree} - \monom[][y] \le {\xi^{\prime}}^{1/\degree}-\xi^{\prime},
\] implying that equality holds throughout. Hence $\concf(y) = (\monom[][y])^{1/\degree}$. However this is a contradiction to $y\in\S\setminus\conv{\{\zerovec,\onevec\}}$ because observe that for any $x\ge\zerovec$, $\concf(x) = (\monom)^{1/\degree}$ if and only if $x_{1}=x_{2}=\dots=x_{n}$, which is equivalent to $x\in\conv{\{\zerovec,\onevec\}}$. Therefore $\S\cap\relint{\{\zerovec,\onevec\}}\neq\emptyset$ is necessary for the proposed upper bound to be tight.

Suppose  that $\S\cap\conv{\{\zerovec,\onevec\}}=\conv{\{\xi_{1}\onevec,\xi_{2}\onevec \}}$ for some $0\le\xi_{1}\le\xi_{2}\le 1$.  On $\relint{\{\zerovec,\onevec\}}$, the function $\concf(x)-\monom$ transforms to the univariate concave function $\xi - \xi^{\degree}$ for $\xi\in(0,1)$, which has a unique stationary point at $\tilde{\xi}=\degree^{1/(1-\degree)}$, giving us $\tilde{\xi} - \tilde{\xi}^{\degree} = \degree^{1/(1-\degree)} - \degree^{\degree/(1-\degree)} = {\xi^{\prime}}^{1/\degree}-\xi^{\prime}$, if  $\xi^{\prime} = \degree^{\frac{\degree}{1-\degree}}$. The function $\xi - \xi^{\degree}$ is increasing on $(0,\tilde{\xi})$ and decreasing on $(\tilde{\xi},1)$. By construction of $\fmin$ and $\fmax$, it follows that $0\le \fmin\le \xi_{1}^{\degree} \le \xi_{2}^{\degree} \le \fmax\le 1$. Therefore $\concf(x)-\monom = {\xi^{\prime}}^{1/\degree}-\xi^{\prime}$ for some $x\in\S$ if and only if $x={\xi^{\prime}}^{1/\degree}\onevec$ and $\xi_{1} \le {\xi^{\prime}}^{1/\degree} \le \xi_{2}$.
\end{proof}

The upper bound presented in Theorem~\ref{thm:concub} depends on the minimum and maximum values of the monomial over $\S$, which can be hard to compute for arbitrary $\S$, and not just on the degree of the monomial. However, an immediate consequence is that the constant $\erropt$, defined as $\erropt := (\degree-1)\,\degree^{\frac{\degree}{1-\degree}}$ in equation~\eqref{eq:C}, is a degree-dependent bound on the error from $\concf(x)$.
 
\begin{corollary}\label{corr:concub}
$\err{\graph[\S][\concf]} \le \erropt$, and this bound is tight if $\degree^{1/(1-\degree)}\onevec\in\S$  and only if $\fmin\le\degree^{\degree/(1-\degree)} \le \fmax$.
\end{corollary}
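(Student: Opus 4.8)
The plan is to read the bound off Theorem~\ref{thm:concub} directly, since its conclusion already gives $\err{\graph[\S][\concf]} \le {\xi^{\prime}}^{1/\degree}-\xi^{\prime}$ with $\xi^{\prime}=\min\{\max\{\fmin,\degree^{\frac{\degree}{1-\degree}}\}, \fmax\}$. As $\erropt$ is the degree-dependent, domain-free constant, the one thing to verify is that the right-hand side of this inequality never exceeds $\erropt$, which amounts to replacing the constrained maximizer $\xi^{\prime}$ by the unconstrained one.

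First I would record that the univariate function $g(\xi):=\xi^{1/\degree}-\xi$ is concave on $[0,1]$ and, as already established inside the proof of Theorem~\ref{thm:concub}, has its unique stationary point and hence global maximizer on $[0,1]$ at $\xi_{0}=\degree^{\frac{\degree}{1-\degree}}$. A short computation using the identity $\degree^{\frac{1}{1-\degree}}=\degree\cdot\degree^{\frac{\degree}{1-\degree}}$ gives
\[
g(\xi_{0})=\degree^{\frac{1}{1-\degree}}-\degree^{\frac{\degree}{1-\degree}}=(\degree-1)\,\degree^{\frac{\degree}{1-\degree}}=\erropt,
\]
which matches the definition of $\erropt$. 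Because $\xi^{\prime}\in[\fmin,\fmax]\subseteq[0,1]$ and $g$ is concave with its maximum at $\xi_{0}$, we have $g(\xi^{\prime})\le g(\xi_{0})=\erropt$; chaining this with Theorem~\ref{thm:concub} yields $\err{\graph[\S][\concf]}\le \erropt$.

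For tightness, I would treat the two directions separately. For the sufficiency (``if'') direction, I would evaluate $\concf-\f$ directly at the diagonal point $y:=\degree^{\frac{1}{1-\degree}}\onevec$, which lies in $\relint\{\zerovec,\onevec\}$ since $\degree\ge 2$ forces $\degree^{\frac{1}{1-\degree}}\in(0,1)$. All coordinates of $y$ are equal, so $\concf(y)=\degree^{\frac{1}{1-\degree}}$ and $\monom[][y]=(\degree^{\frac{1}{1-\degree}})^{\degree}=\degree^{\frac{\degree}{1-\degree}}$, giving $\concf(y)-\monom[][y]=\erropt$; hence whenever $y\in\S$ the bound is attained. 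For the necessity (``only if'') direction, I would invoke uniqueness of the maximizer of $g$: from $\err{\graph[\S][\concf]}\le g(\xi^{\prime})\le g(\xi_{0})=\erropt$, equality $\err{\graph[\S][\concf]}=\erropt$ forces $g(\xi^{\prime})=g(\xi_{0})$, and strict concavity of $g$ then forces $\xi^{\prime}=\xi_{0}=\degree^{\frac{\degree}{1-\degree}}$. Unwinding the definition $\xi^{\prime}=\min\{\max\{\fmin,\xi_{0}\},\fmax\}$ shows this holds exactly when $\fmin\le\degree^{\frac{\degree}{1-\degree}}\le\fmax$.

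I expect no serious obstacle, as the corollary is a direct specialization of Theorem~\ref{thm:concub}. The only point demanding care is the exponent bookkeeping needed to verify $g(\xi_{0})=\erropt$, together with keeping the two tightness conditions straight: the sufficient condition is phrased in terms of the critical \emph{point} $\degree^{\frac{1}{1-\degree}}\onevec$, whereas the necessary condition is phrased in terms of the critical \emph{value} $\degree^{\frac{\degree}{1-\degree}}$ of $\f$, so I must not conflate the maximizing argument $\xi^{\prime}$ with the maximizing point ${\xi^{\prime}}^{1/\degree}\onevec$, since they carry different powers of $\degree$.
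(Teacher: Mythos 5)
Your proposal is correct and takes essentially the same route as the paper's proof: both reduce the corollary to Theorem~\ref{thm:concub} by replacing the constrained maximizer $\xi^{\prime}$ with the unconstrained maximizer $\xi_{0}=\degree^{\degree/(1-\degree)}$ of $\xi^{1/\degree}-\xi$ on $[0,1]$, verify the exponent identity giving the value $\erropt$, and obtain the necessity of $\fmin\le\degree^{\degree/(1-\degree)}\le\fmax$ from uniqueness of that maximizer. The only cosmetic difference is that you make the sufficiency direction explicit by evaluating $\concf-\f$ at the point $\degree^{1/(1-\degree)}\onevec$, a step the paper leaves implicit in the tightness criterion of Theorem~\ref{thm:concub}.
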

\begin{proof}
The function $\xi^{1/\degree}-\xi$ attains its maxima over $[0,1]$ uniquely at $\xi_{0}=\degree^{\degree/(1-\degree)}$. 
The definition of $\xi^{\prime}$ then gives us \[{\xi^{\prime}}^{1/\degree}-\xi^{\prime} \le \left(\degree^{\frac{\degree}{1-\degree}}\right)^{\frac{1}{\degree}} - \degree^{\frac{\degree}{1-\degree}} = \degree^{\frac{1}{1-\degree}} - \degree^{\frac{\degree}{1-\degree}} = (\degree-1)\degree^{\frac{\degree}{1-\degree}} = \erropt, \] and subsequently, Theorem~\ref{thm:concub} leads to $\erropt$ being an upper bound on $\concf(x) - \monom$. The uniqueness of the maxima of $\xi^{1/\degree}-\xi$ also implies that for $\erropt$ to be a tight bound, we must have $\xi^{\prime}=\degree^{\degree/(1-\degree)}$, which is equivalent to $\fmin\le\degree^{\degree/(1-\degree)} \le \fmax$. 
\end{proof}

Notice that the necessity of $\fmin\le\degree^{\degree/(1-\degree)} \le \fmax$ in the above corollary is not immediate from the statement of Theorem~\ref{thm:concub}. This can be explained as follows. Denote $\S\cap\conv{\{\zerovec,\onevec\}}=\conv{\{\xi_{1}\onevec,\xi_{2}\onevec \}}$ for some $0\le\xi_{1}\le\xi_{2}\le 1$. Since we showed  that $\erropt$ is an upper bound on ${\xi^{\prime}}^{1/\degree}-\xi^{\prime}$, Theorem~\ref{thm:concub} implies that if $\erropt$ is a tight bound then $\xi_{1} \le {\xi^{\prime}}^{1/\degree} \le \xi_{2}$. By construction, $\xi^{\prime}\in\{\fmin,\fmax,\degree^{\degree/(1-\degree)}\}$ and $(\fmin)^{1/\degree}\le \xi_{1} \le \xi_{2} \le (\fmax)^{1/\degree}$. So, by Theorem~\ref{thm:concub}, it is possible to have $\degree^{1/(1-\degree)} < (\fmin)^{1/\degree}$ or $\degree^{1/(1-\degree)} > (\fmax)^{1/\degree}$, if $\erropt$ is tight. However, Corollary~\ref{corr:concub} rules out this possibility. Furthermore, the condition $\fmin\le\degree^{\degree/(1-\degree)} \le \fmax$ is not sufficient to guarantee tightness of $\erropt$. The reason being that this condition does not enforce non-emptiness of $\S\cap\relint{\{\zerovec,\onevec\}}$, which we know to be necessary from Theorem~\ref{thm:concub}. 

If the minimum and maximum values of $\monom$ over $\S$ are low-enough and high-enough, respectively, as per Corollary~\ref{corr:concub}, then we have a precise characterization of when $\erropt$ is a tight bound on $\concf(x) - \monom$.

\begin{corollary}\label{corr:concub2}
For any $\S\subseteq\Hzone$ with $\fmin\le\degree^{\degree/(1-\degree)} \le \fmax$, the upper bound $\erropt$ on $\concf(x) - \monom$ is tight if and only if $\degree^{1/(1-\degree)}\onevec\in\S$. In particular, $\err{\graph[][\concenv{\f}{\Hzone}]} = \erropt$. 
\end{corollary}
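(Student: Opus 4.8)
The plan is to obtain this corollary as a direct specialization of Theorem~\ref{thm:concub}, with the hypothesis serving only to pin down the auxiliary quantity $\xi^{\prime}$ appearing there. First I would note that the assumption $\fmin \le \degree^{\frac{\degree}{1-\degree}} \le \fmax$ forces the nested expression $\xi^{\prime} = \min\{\max\{\fmin, \degree^{\frac{\degree}{1-\degree}}\}, \fmax\}$ to collapse to exactly $\degree^{\frac{\degree}{1-\degree}}$: the inner $\max$ returns $\degree^{\frac{\degree}{1-\degree}}$ because it dominates $\fmin$, and the outer $\min$ leaves it unchanged because it is dominated by $\fmax$. Substituting $\xi^{\prime}=\degree^{\frac{\degree}{1-\degree}}$ into the bound of Theorem~\ref{thm:concub} gives ${\xi^{\prime}}^{1/\degree}-\xi^{\prime} = \degree^{\frac{1}{1-\degree}} - \degree^{\frac{\degree}{1-\degree}} = \erropt$, exactly the constant from equation~\eqref{eq:C}, while the point of attainment ${\xi^{\prime}}^{1/\degree}\onevec$ becomes $\degree^{\frac{1}{1-\degree}}\onevec$.

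With these identifications, the tightness clause of Theorem~\ref{thm:concub} --- that the bound is attained only at ${\xi^{\prime}}^{1/\degree}\onevec$ and is therefore tight if and only if that point lies in $\S$ --- reads precisely as: under the stated value hypothesis, $\erropt$ is a tight bound on $\concf(x)-\monom$ if and only if $\degree^{\frac{1}{1-\degree}}\onevec\in\S$. This is the first assertion. I would stress here that the value condition alone is insufficient, as noted in the discussion preceding the corollary: it does not force $\S$ to meet $\relint{\{\zerovec,\onevec\}}$. The content of the corollary is that, once the value condition is granted, tightness reduces to membership of the single diagonal point $\degree^{\frac{1}{1-\degree}}\onevec$.

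For the ``in particular'' claim I would specialize to $\S=\Hzone$. Since $\concenv{\f}{\Hzone} = \concf$ by \eqref{eq:crama2}, we have $\err{\graph[][\concenv{\f}{\Hzone}]} = \err{\graph[\Hzone][\concf]}$. A direct evaluation of the extreme values over the full box gives $\fmin = 0$ (attained on any coordinate facet) and $\fmax = 1$ (attained at $\onevec$); because $\degree \ge 2$ forces $\degree^{\frac{\degree}{1-\degree}} \in (0,1)$, the value hypothesis $0 \le \degree^{\frac{\degree}{1-\degree}} \le 1$ holds, and likewise $\degree^{\frac{1}{1-\degree}}\onevec \in \Hzone$. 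The first part then immediately yields $\err{\graph[\Hzone][\concf]} = \erropt$.

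I do not anticipate a genuine obstacle, as the result is essentially a careful bookkeeping specialization of Theorem~\ref{thm:concub} rather than a new argument. The only steps requiring attention are the collapse of the three-fold $\min$/$\max$ defining $\xi^{\prime}$ under the hypothesis and the verification of the endpoint values $\fmin = 0$, $\fmax = 1$ over $\Hzone$; both are elementary.
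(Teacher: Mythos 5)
Your proof is correct and follows essentially the same route as the paper's: the value hypothesis collapses $\xi^{\prime}$ in Theorem~\ref{thm:concub} to $\degree^{\degree/(1-\degree)}$, making the theorem's bound and attainment condition read exactly as the corollary's first claim, and the $\Hzone$ case follows from $\fmin[\Hzone]=0$, $\fmax[\Hzone]=1$, $\degree^{1/(1-\degree)}\onevec\in\Hzone$, and $\concf=\concenv{\f}{\Hzone}$ via \eqref{eq:crama2}. No gaps; your added remark that the value condition alone does not suffice matches the paper's own discussion preceding the corollary.
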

\begin{proof}
The assumptions of $\fmin$ and $\fmax$ imply $\xi^{\prime} = \degree^{\degree/(1-\degree)}$ in Theorem~\ref{thm:concub}, thereby leading to the first claim. Since $\fmin[\Hzone]=0$, $\fmax[\Hzone]=1$, and $\degree^{1/(1-\degree)}\onevec\in\relint\{\zerovec,\onevec\}$, the second claim follows from the first part and $\concf = \concenv{\f}{\Hzone}$ from \eqref{eq:crama2}.
\end{proof}

For the simplex $\Deltaone := \conv{\{\onevec,\onevec-\onevec[1],\dots,\onevec-\onevec[n] \}}$,  clearly,  $\fmin=0,\fmax=1$ for any  $\S\supseteq\Deltaone$. \blue{This simplex can be described as $\Deltaone = \{x\mid \sum_{j=1}^{n}x_{j} \ge n-1, \ x\le \onevec  \}$. When $\degree = n$, i.e., multilinear monomial, it is easy to verify graphically that $n^{1/(1-n)} < 1 - 1/n$ so that the point $n^{1/(1-n)}\onevec$ does not belong to $\Deltaone$. However, the function $\degree^{1/(1-\degree)}$ being monotone in $\degree$, for large enough values of $\degree$, we have $\degree^{1/(1-\degree)} \ge 1 - 1/n$, as can be verified numerically, and consequently, $\degree^{1/(1-\degree)}\onevec\in\Deltaone$. Hence, the bound $\erropt$ from Corollary~\ref{corr:concub2} is tight for arbitrary $\S\supseteq\Deltaone$ when the monomial degree is large}. 

\subsubsection{Standard simplex}
For monomials considered over the standard $n$-simplex $\Delta_{n}$, we obtain a bound in Proposition~\ref{prop:concubsimpl} that is tight only for symmetric monomials. The proof of this result uses the following lemma which will be useful also in proving Theorem~\ref{thm:converr01} later in \textsection\ref{sec:converr}.

\begin{lemma}\label{lem:dineq}
$\degree^{(\degree-1)^{2}}>\degree^{\degree(\degree-2)} \ge (\degree-1)^{(\degree-1)^{2}}$ for all $\degree\ge 2$, and $\degree^{\degree(\degree-2)} = (\degree-1)^{(\degree-1)^{2}}$ if and only if $\degree=2$. 
\end{lemma}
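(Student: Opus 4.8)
The plan is to treat the two inequalities separately, since the left one is immediate and all the real content sits in the right one.

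For the left inequality $\degree^{(\degree-1)^{2}} > \degree^{\degree(\degree-2)}$, I would simply compare exponents. Since the common base $\degree > 1$, strict monotonicity of $t\mapsto\degree^{t}$ reduces the claim to $(\degree-1)^{2} > \degree(\degree-2)$, and expanding gives $(\degree-1)^{2} - \degree(\degree-2) = 1$, so the inequality is strict for every $\degree\ge 2$. This step also records the identity $\degree(\degree-2) = (\degree-1)^{2} - 1$, which I will reuse.

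For the right inequality $\degree^{\degree(\degree-2)} \ge (\degree-1)^{(\degree-1)^{2}}$, I would use that identity to write $\degree^{\degree(\degree-2)} = \degree^{(\degree-1)^{2}}/\degree$, so that dividing through by $(\degree-1)^{(\degree-1)^{2}}$ turns the claim into the cleaner equivalent statement
\[
\left(\frac{\degree}{\degree-1}\right)^{(\degree-1)^{2}} \ge \degree.
\]
Substituting $m := \degree-1 \ge 1$ and taking logarithms, this becomes $h(m) := m^{2}\ln(1+1/m) - \ln(m+1) \ge 0$. I would check the equality case directly: $h(1) = \ln 2 - \ln 2 = 0$, which corresponds to $\degree = 2$.

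The heart of the argument is to show that $h$ is strictly increasing on $[1,\infty)$; combined with $h(1)=0$ this yields $h(m) > 0$ for $m > 1$ and pins equality to $m=1$, i.e. to $\degree = 2$, giving the stated "if and only if." Differentiating, the two terms collapse very cleanly to $h'(m) = 2m\ln(1+1/m) - 1$. To see $h'(m) > 0$ I would invoke the elementary bound $\ln(1+1/m) > \frac{1}{m+1}$ (valid for $m>0$), which gives $2m\ln(1+1/m) > \frac{2m}{m+1} \ge 1$, where the final inequality holds precisely because $m \ge 1$. Hence $h' > 0$ throughout and the proof is complete. The main (indeed the only) obstacle is this right inequality, and everything hinges on the normalization: rewriting it as $(\degree/(\degree-1))^{(\degree-1)^{2}} \ge \degree$ is what makes it tractable, and the unexpectedly tidy derivative $h'(m)=2m\ln(1+1/m)-1$ is what lets a one-line logarithmic estimate finish it. A calculus-free alternative is to observe that $(1+1/m)^{m^{2}} = \bigl[(1+1/m)^{m}\bigr]^{m} \ge 2^{m} \ge m+1$ for $m\ge 1$, using monotonicity of $(1+1/m)^{m}$ (so it is at least $2$) together with $2^{m}\ge m+1$; tracking when each bound is tight again isolates $m=1$.
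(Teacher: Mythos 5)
Your proposal is correct, and its skeleton matches the paper's: the left inequality is dispatched by comparing exponents (the difference $(\degree-1)^{2}-\degree(\degree-2)=1$), and the right inequality is normalized to the same key statement $\bigl(\degree/(\degree-1)\bigr)^{(\degree-1)^{2}}\ge\degree$. Where you diverge is in how that key inequality is proved. The paper stays purely algebraic: since $(\degree-1)^{2}$ is an integer, binomial expansion gives $\bigl(1+\tfrac{1}{\degree-1}\bigr)^{(\degree-1)^{2}}\ge 1+\tfrac{(\degree-1)^{2}}{\degree-1}=\degree$, and strictness for $\degree\ge 3$ (hence the equality characterization) comes from retaining the second-order binomial term, which contributes the extra amount $\degree(\degree-2)/2>0$. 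You instead substitute $m=\degree-1$, take logarithms, and show $h(m)=m^{2}\ln(1+1/m)-\ln(m+1)$ is strictly increasing from $h(1)=0$ via the clean derivative $h'(m)=2m\ln(1+1/m)-1$ and the bound $\ln(1+1/m)>\tfrac{1}{m+1}$; your computation of $h'$ and the final estimate $\tfrac{2m}{m+1}\ge 1$ for $m\ge 1$ are all correct. What each approach buys: the paper's argument is calculus-free and two lines long, but is tied to integrality of the exponent; yours is valid for all real $\degree\ge 2$ and handles the inequality and its equality case in one uniform monotonicity statement rather than splitting into $\degree=2$ and $\degree\ge 3$. Your calculus-free alternative, $\bigl[(1+1/m)^{m}\bigr]^{m}\ge 2^{m}\ge m+1$, is yet a third route, distinct from the paper's single expansion in that it factors the exponent as $m\cdot m$ and invokes monotonicity of $(1+1/m)^{m}$; it too correctly isolates $m=1$ as the unique equality case. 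All three arguments are sound; no gaps.
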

\begin{proof}
Obviously $\degree^{(\degree-1)^{2}}=\degree^{\degree^{2}-2\degree+1}>\degree^{\degree(\degree-2)}$. Since $\degree/(\degree-1) = 1 + 1/(\degree-1)$, binomial expansion gives us
\[
\left( \frac{\degree}{\degree-1} \right)^{(\degree-1)^{2}} \;\ge\; 1 + \frac{(\degree-1)^{2}}{\degree-1} \;=\; \degree, \qquad \degree\ge 2.
\]
This is equivalent to $\degree^{\degree(\degree-2)} \ge (\degree-1)^{(\degree-1)^{2}}$. Clearly, equality holds for $\degree=2$. For $\degree\ge 3$, binomial expansion gives us
\[
\left(\frac{\degree}{\degree-1} \right)^{(\degree-1)^{2}} \;\ge\; 1 + \frac{(\degree-1)^{2}}{\degree-1} + \binom{(\degree-1)^{2}}{2}\frac{1}{(\degree-1)^{2}} \;=\; \degree + \frac{\degree(\degree-2)}{2} \;>\; \degree,\qquad\degree\ge 3,
\]
thereby leading to $\degree^{\degree(\degree-2)} > (\degree-1)^{(\degree-1)^{2}}$.
\end{proof}

\begin{proposition}\label{prop:concubsimpl}
\[ \err{\graph[\Delta_{n}][\concf]} \;\le\; \frac{(\monom[][\power])^{1/\degree}}{\degree} \, -\, \frac{\monom[][\power]}{\degree^{\degree}}, \] and this bound is tight if and only if $\power_{1}=\cdots=\power_{n}$.
\end{proposition}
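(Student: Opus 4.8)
The plan is to reduce the proposition to the one-dimensional estimate of Theorem~\ref{thm:concub}, so the first step is to evaluate the two quantities that enter that theorem over $\S=\Delta_n$. Clearly $\fmin[\Delta_n]=0$, while maximizing $\monom$ over $\Delta_n$ is a standard AM--GM (or Lagrange) computation whose maximizer is $x_j=\power_j/\degree$, giving $\fmax[\Delta_n]=\prod_{j}(\power_j/\degree)^{\power_j}=\monom[][\power]/\degree^{\degree}$. Feeding these into Theorem~\ref{thm:concub} produces the bound ${\xi'}^{1/\degree}-\xi'$ with $\xi'=\min\{\degree^{\degree/(1-\degree)},\fmax[\Delta_n]\}$, the $\fmin$ branch being inactive since $\fmin[\Delta_n]=0<\degree^{\degree/(1-\degree)}$. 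Because the claimed bound is exactly $(\fmax[\Delta_n])^{1/\degree}-\fmax[\Delta_n]$, I would finish the upper bound by showing $\xi'=\fmax[\Delta_n]$, i.e.\ $\fmax[\Delta_n]\le\degree^{\degree/(1-\degree)}$.

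This inequality is the crux and I expect it to be the main obstacle. After clearing denominators it reads $(\monom[][\power])^{\degree-1}\le\degree^{\degree(\degree-2)}$, so I must bound $\monom[][\power]=\prod_j\power_j^{\power_j}$ from above. I would do this by majorization: since $t\mapsto t\log t$ is convex, $\sum_j\power_j\log\power_j$ is Schur-convex, hence maximized over integer compositions of $\degree$ into $n\ge2$ positive parts at the most spread-out vector $(\degree-n+1,1,\dots,1)$. This yields $\monom[][\power]\le(\degree-n+1)^{\degree-n+1}\le(\degree-1)^{\degree-1}$ and therefore $(\monom[][\power])^{\degree-1}\le(\degree-1)^{(\degree-1)^{2}}$. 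At this point Lemma~\ref{lem:dineq} supplies precisely the missing link $(\degree-1)^{(\degree-1)^{2}}\le\degree^{\degree(\degree-2)}$, closing the chain; indeed the lemma appears to have been tailored for this step. Substituting $\xi'=\fmax[\Delta_n]$ then reproduces the claimed bound $(\monom[][\power])^{1/\degree}/\degree-\monom[][\power]/\degree^{\degree}$.

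For tightness I would appeal to the second half of Theorem~\ref{thm:concub}, which says the bound can be attained \emph{only} at ${\xi'}^{1/\degree}\onevec=((\monom[][\power])^{1/\degree}/\degree)\,\onevec$ and is tight iff this point lies in $\Delta_n$. Since $\Delta_n\cap\{t\onevec:t\ge0\}=\{t\onevec:0\le t\le1/n\}$, membership is equivalent to $(\monom[][\power])^{1/\degree}/\degree\le1/n$, i.e.\ $\monom[][\power]\le(\degree/n)^{\degree}$. The reverse inequality, however, always holds: Jensen's inequality for the convex map $t\mapsto t\log t$ gives $\frac1n\sum_j\power_j\log\power_j\ge\frac{\degree}{n}\log\frac{\degree}{n}$, i.e.\ $\monom[][\power]\ge(\degree/n)^{\degree}$, with equality exactly when all $\power_j$ agree. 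Hence the membership condition collapses to the equality $\monom[][\power]=(\degree/n)^{\degree}$, which holds iff $\power_1=\cdots=\power_n$; a direct evaluation at the symmetric point $x=\frac1n\onevec$ then confirms the bound is achieved in that case, completing the \emph{iff}.
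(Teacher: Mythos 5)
Your proposal is correct and follows essentially the same route as the paper's own proof: computing $\fmin[\Delta_{n}]=0$ and $\fmax[\Delta_{n}]=\monom[][\power]/\degree^{\degree}$, establishing $\fmax[\Delta_{n}]\le\degree^{\degree/(1-\degree)}$ by bounding $\monom[][\power]\le(\degree-n+1)^{\degree-n+1}\le(\degree-1)^{\degree-1}$ (your Schur-convexity phrasing is the paper's convexity-plus-integrality argument) and then invoking Lemma~\ref{lem:dineq}, and finally settling tightness via Jensen's inequality for $t\mapsto t\log t$, exactly as in the paper's embedded claim.
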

\begin{proof}
$\fmin[\Delta_{n}]=0$ because $\zerovec\in\Delta_{n}$. The maximum value of $\monom$ over $\Delta_{n}$ is obviously attained in the relative interior of the face defined by the plane $\sum_{j=1}^{n}x_{j}= 1$. Solving the KKT system for $\fmax[\Delta_{n}] = \max_{\bx}\,\{\monom\mid \sum_{j=1}^{n}x_{j}= 1\}$ gives us $\fmax[\Delta_{n}] = \frac{\monom[][\power]}{\degree^{\degree}}$. For fixed integers $2 \le n \le \degree$, it is easy to argue that 
\[\max_{\power}\left\{\monom[][\power]\mid \power\in\posint^{n},\sum_{j=1}^{n}\power_{j}=\degree \right\} \;=\; (\degree-n+1)^{\degree-n+1},\] using  the convexity of $t\mapsto t\log{t}$ and the integrality of the polytope $\{\power\in\real^{n}_{\ge 1}\mid \sum_{j}\power_{j}=\degree  \}$. Therefore for fixed $\degree$, the maximum value of $\monom[][\power]$ is achieved with $n=2$ and is equal to $(\degree-1)^{\degree-1}$. Thus, $\fmax[\Delta_{n}] = \monom[][\power]/\degree^{\degree} \le (\degree-1)^{\degree-1}/\degree^{\degree} $. By Lemma~\ref{lem:dineq}, we have $\degree^{\degree(\degree-2)/(\degree-1)} \ge (\degree-1)^{\degree-1}$ and so 
\[\fmax[\Delta_{n}] \le \frac{\degree^{\degree(\degree-2)/(\degree-1)}}{\degree^{\degree}} = \degree^{\degree/(1-\degree)}.\] 
This implies that $\xi^{\prime} = \fmax[\Delta_{n}]$ in Theorem~\ref{thm:concub}, thereby giving us  the proposed upper bound on $\concf(x) - \monom$. Theorem~\ref{thm:concub} also tells us that this bound is tight if and only if $\frac{(\monom[][\power])^{1/\degree}}{\degree}\,\onevec \in \Delta_{n}$, which is equivalent to showing $\monom[][\power] \le (\degree/n)^{\degree}$. Observe the following.
\begin{claim}
$\monom[][\power] \ge (\degree/n)^{\degree}$ for $\power\ge\onevec$, with equality holding if and only if  $\power_{1}=\cdots=\power_{n}$.
\end{claim}
\begin{proof}[Proof of Claim]\renewcommand{\qedsymbol}{$\diamond$}
This inequality is obtained by applying Jensen's inequality to the convex function $t\in(0,\infty)\mapsto t\log{t}$  with the $n$ points being $t_{i}=\power_{i} \ \forall i$ and the convex combination weights being all equal to $1/n$. The equality condition is due to $t\log{t}$ being strictly convex.
\end{proof}
Therefore our bound is tight if and only if the monomial is symmetric.
\end{proof}


\subsection{Convex underestimator error}\label{sec:cvxerr}
We address the case of a simplex first because it is easy. 

\begin{proposition}\label{prop:cvxubsimpl}
Suppose that $\S$ is a $0\backslash 1$ polytope with $\onevec\notin \S$. Then $\cvxenv{\f}{\S}(\cdot)=0$. In particular, $\cvxenv{\f}{\Delta_{n}}(\cdot)=0$, and the error due to this envelope is equal to $\power^{\power}/\degree^{\degree}$.
\end{proposition}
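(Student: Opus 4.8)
The plan is to prove $\cvxenv{\f}{\S}(\cdot)=0$ by the two-sided sandwich $0\le\cvxenv{\f}{\S}(\cdot)\le 0$, then to specialize to $\Delta_{n}$ and read off the error as $\fmax[\Delta_{n}]$. The lower bound is immediate: since $\S\subseteq\Hzone$ and $\power\ge\onevec$, we have $\f(x)=\monom\ge 0$ on $\S$, so the constant function $0$ is a convex underestimator of $\f$ on $\S$; as $\cvxenv{\f}{\S}$ is the pointwise supremum of all such underestimators, $\cvxenv{\f}{\S}(\cdot)\ge 0$.

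For the reverse inequality I would exploit the $0\backslash 1$ structure. Every vertex $v$ of $\S$ lies in $\{0,1\}^{n}$, and the hypothesis $\onevec\notin\S$ excludes $v=\onevec$, so each vertex has at least one zero coordinate; since every exponent satisfies $\power_{j}\ge 1$, this forces $\f(v)=0$ at each vertex. Fixing $x\in\S$ and writing $x=\sum_{i}\lambda_{i}v^{i}$ as a convex combination of the vertices of $\S$, convexity of $\cvxenv{\f}{\S}$ together with $\cvxenv{\f}{\S}\le\f$ gives $\cvxenv{\f}{\S}(x)\le\sum_{i}\lambda_{i}\cvxenv{\f}{\S}(v^{i})\le\sum_{i}\lambda_{i}\f(v^{i})=0$. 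The two bounds yield $\cvxenv{\f}{\S}(\cdot)=0$.

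To finish, I would check that $\Delta_{n}$ satisfies the hypotheses: its vertices $\zerovec,\onevec[1],\dots,\onevec[n]$ all lie in $\{0,1\}^{n}$, and $\onevec\notin\Delta_{n}$ because $\sum_{j}(\onevec)_{j}=n>1$ for $n\ge 2$, so $\cvxenv{\f}{\Delta_{n}}(\cdot)=0$. The envelope error then collapses to $\errvex[\Delta_{n}]=\max_{x\in\Delta_{n}}\big(\f(x)-\cvxenv{\f}{\Delta_{n}}(x)\big)=\fmax[\Delta_{n}]$, and this maximum was already shown in the proof of Proposition~\ref{prop:concubsimpl}, via the KKT/AM--GM maximizer $x=\power/\degree$, to equal $\monom[][\power]/\degree^{\degree}=\power^{\power}/\degree^{\degree}$. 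The argument is elementary throughout; the only genuinely substantive ingredient is the evaluation of $\fmax[\Delta_{n}]$, which I inherit from Proposition~\ref{prop:concubsimpl}, so the main thing to get right is the vertex-vanishing observation and the convexity-plus-underestimation chain used for the upper bound.
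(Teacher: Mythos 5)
Your proof is correct and follows essentially the same route as the paper: the paper invokes the general Jensen-based fact that a nonnegative function vanishing at every vertex of a polytope has constant zero envelope, and you simply unfold that fact into the explicit vertex-decomposition argument, with the same vertex-vanishing observation (forced by $\onevec\notin\S$ and $\power\ge\onevec$) and the same deferral of $\fmax[\Delta_{n}]=\power^{\power}/\degree^{\degree}$ to Proposition~\ref{prop:concubsimpl}. No gaps; nothing further to add.
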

\begin{proof}
Observe the following fact which is an immediate consequence of  applying Jensen's inequality to the definition of convex envelope: for a continuous function $\phi\colon X\mapsto[\phi_{0},\infty)$ for some finite $\phi_{0}$ and bounded polyhedral domain $X$, if  $\phi(v)=\phi_{0}$ for every vertex $v$ of $X$, then $\cvxenv{\phi}{X}(\cdot) = \phi_{0}$. Since $\f(x)\ge 0$ for $x\ge\zerovec$ and $\f(x)=0$ for $x\in\{0,1\}^{n}\setminus\{\onevec\}$, it follows from the assumption on $\S$ that $\cvxenv{\f}{\S}(\cdot)=0$. The standard $n$-simplex $\Delta_{n}$ satisfies the assumption on $\S$ and so the convex envelope over it is the zero function, thereby making the error equal to $\fmax[\Delta_{n}]$. This value  was argued in the proof of Proposition~\ref{prop:concubsimpl} to be equal to $\power^{\power}/\degree^{\degree}$.
\end{proof}

Hereafter, we let $\S$ be an arbitrary subset of $\Hzone$, with a special interest in $S=\Hzone$, or more generally $\S\supseteq\Deltaone[\lambda]$, where
\begin{subequations}\label{eq:delta1}
\begin{equation}
\Deltaone[\lambda] :=  \conv{\{\onevec,\onevec-\lambda_{1}\onevec[1],\dots,\onevec-\lambda_{n}\onevec[n]\}} = \left\{x\le\onevec\mid \sum_{j=1}^{n}\frac{x_{j}}{\lambda_{j}} \ge \sum_{j=1}^{n}\frac{1}{\lambda_{j}} - 1 \right\}, \ \; \zerovec<\lambda\le\onevec,
\end{equation}
is a $n$-simplex cornered at $\onevec$. For convenience, we write $\Deltaone[\onevec]$ simply as $\Deltaone$. The motivation for studying the case $\S\supseteq\Deltaone[\lambda]$ is clear from Proposition~\ref{prop:cvxubsimpl} which highlights the significance of the vertex $\onevec$ belonging to $\S$. Also note that the polytope $\Delta^{\zerovec}_{n}$, the complement of $\Deltaone$ defined as
\begin{equation}
 \Delta^{\zerovec}_{n} := \conv{\left(\{0,1\}^{n}\setminus\{\onevec\}\right)} = \left\{x\in\Hzone\mid \sum_{j=1}^{n}x_{j}\le n-1 \right\},
\end{equation}
\end{subequations}
is a $0\backslash 1$ polytope not containing $\onevec$. \blue{Note that $\Delta^{\zerovec}_{n}$ is \emph{not} the simplex cornered at $\zerovec$, which was defined in \textsection\ref{sec:intro} to be $\Delta_{n}$}. If $\Delta^{\zerovec}_{n}\subseteq\S\subseteq\Hzone$, $\cvxenv{\f}{\S}(x)=0$ for all $x\in\Delta^{\zerovec}_{n}$, and therefore one would be interested in finding strong convex underestimators of $\monom$ over $\S\setminus\Delta^{\zerovec}_{n}$. We will derive a piecewise  linear convex underestimator later in Proposition~\ref{prop:Prelax}.

\blue{
We begin by establishing an error bound in Theorem~\ref{thm:errenv01}. This bound does not have an explicit expression or formula, rather it is stated as the infimum of a certain function. However, it serves as a stepping stone towards deriving explicit error bounds in \textsection\ref{sec:explicit} that depend only on the degree of the polynomial, and hence towards proving our main result in \textsection\ref{sec:converr}.

\subsubsection{Implicit bound}
}

Unlike \textsection\ref{sec:concerr} where we calculate the error from a specific concave overestimator, here we consider a general convex underestimator defined as the pointwise supremum of a family of affine functions,
\begin{subequations}
\begin{equation}\label{eq:cvxf}
\cvxf(x) := \max\left\{0,\,\sup_{\beta\in\B}\;\sigma(\beta) + \sum_{j=1}^{n}\beta_{j}(x_{j}-1)\right\}, 
\end{equation}
for some nonempty (possibly countably infinite) set $\B\subseteq\onevec+\nonnegreal[n]$, where
\begin{equation}\label{eq:sigma}
\sigma(\beta) := \min_{\bx\in\S}\,\monom-\sum_{j=1}^{n}\beta_{j}(x_{j}-1)
\end{equation}
for each $\beta\in\B$ to ensure that the linear function $\sigma(\beta) + \sum_{j=1}^{n}\beta_{j}(x_{j}-1)$ underestimates and touches the graph of $\monom$. For finite $\B$, $\cvxf$ is a piecewise linear convex underestimator, otherwise $\cvxf$ could represent the convex envelope of $\monom$ over $\S$. The assumption of nonnegativity on $\beta$ is due to the fact that the gradient of $\monom$ at any point in $\nonnegreal[n]$ is a nonnegative vector. For convenience, we allow only positive $\beta$ and scale it greater than equal to 1 by assuming $\B\subseteq\onevec+\nonnegreal[n]$. The multilinear monomial with $\S=\Hzone$ would have $\cvxf(x) = \max\{0, 1 + \sum_{j=1}^{n}(x_{j}-1)\}$ (cf. \eqref{eq:crama}) with $\B=\{\onevec\}$ and $\sigma(\onevec)=1$. 

Denote $\degree[\beta]:= \sum_{j=1}^{n}\beta_{j}$. This gives us 
\begin{equation}\label{eq:sigma}
\sigma(\beta) = \degree[\beta]\, +\, \min_{\bx\in\S}\monom - \beta^{\top}\bx.
\end{equation}
\end{subequations}

Towards proving our main error bound in terms of only the degree of the monomial, we first obtain in Theorem~\ref{thm:errenv01} a error bound that depends on $\sigma(\beta)$'s. We make some remarks on $\sigma(\beta)$ here. An explicit formula for $\sigma(\beta)$ for arbitrary $\S$ seems hard and the function is expected to be nonconvex ($\sigma(\beta)$ is a translate of the negative of the Fenchel conjugate of $\monom$). However, it is possible to find bounds on it, which we state next. 

\begin{proposition}\label{prop:sigmad}
We have the following for $\sigma(\beta)$ when $\S\subseteq\Hzone$:
\begin{enumerate}
\item $0\le\sigma(\beta)<\degree[\beta]$.
\item If $\S=\Hzone$, then \blue{$0 \le \sigma(\beta) \le 1$}. 
\end{enumerate}
Let \blue{$(\cdot)$ be the permutation that sorts $\beta$ as} $\beta_{(1)}\ge \beta_{(2)} \ge \cdots \ge \beta_{(n)}$. 
\begin{enumerate}[resume]
\item If $\Delta^{\zerovec}_{n}\cap\Deltaone\subseteq\S\subseteq\Delta^{\zerovec}_{n}$, then $\sigma(\beta) = \beta_{(n)}$.
\end{enumerate}
\end{proposition}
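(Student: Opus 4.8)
The plan is to work throughout with the equivalent form
\[
\sigma(\beta)=\degree[\beta]+\min_{x\in\S}\left(\monom-\beta^{\top}x\right)=\min_{x\in\S}\left(\monom+\sum_{j=1}^{n}\beta_{j}(1-x_{j})\right),
\]
and to exploit that on $\Hzone$ one has $x_{j}\in[0,1]$, $\monom\ge0$, and $\beta\ge\onevec$. With this in hand, parts~(1) and~(2) reduce to inequality-chasing on $\Hzone$ together with evaluation at the corner $\onevec$, whereas part~(3) is a two-sided sandwich that invokes each side of the hypothesis $\Delta^{\zerovec}_{n}\cap\Deltaone\subseteq\S\subseteq\Delta^{\zerovec}_{n}$ exactly once. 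Recall $\beta_{(n)}=\min_{j}\beta_{j}$.

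For the lower bounds, I would simply note that every term of $\monom+\sum_{j}\beta_{j}(1-x_{j})$ is nonnegative on $\Hzone$ (using $\monom\ge0$, $\beta_{j}\ge1$, and $1-x_{j}\ge0$), so its minimum over any $\S\subseteq\Hzone$ is $\ge0$; this proves $\sigma(\beta)\ge0$ in both~(1) and~(2). For the upper bound in~(2), since $\onevec\in\Hzone$ the minimum is at most the value of the integrand at $x=\onevec$, which equals $1$; hence $\sigma(\beta)\le1$.

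For the upper bound in~(1), it suffices to show $\min_{x\in\S}(\monom-\beta^{\top}x)<0$, and I would first establish the pointwise chain on $\Hzone$,
\[
\monom=\prod_{j=1}^{n}x_{j}^{\power_{j}}\le\prod_{j=1}^{n}x_{j}\le\min_{j}x_{j}\le\sum_{j=1}^{n}x_{j}\le\sum_{j=1}^{n}\beta_{j}x_{j}=\beta^{\top}x,
\]
where the first step uses $x_{j}^{\power_{j}}\le x_{j}$ and the last uses $\beta_{j}\ge1$; this already gives the non-strict bound $\sigma(\beta)\le\degree[\beta]$. \emph{The one delicate point} is upgrading this to a strict inequality. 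I would argue that $\monom-\beta^{\top}x=0$ on $\Hzone$ forces $x=\zerovec$: if some coordinate of $x$ vanishes then $\monom=0<\beta^{\top}x$ whenever $x\ne\zerovec$, while if all coordinates are positive the middle inequality $\min_{j}x_{j}\le\sum_{j}x_{j}$ is strict for $n\ge2$. Consequently $\min_{x\in\S}(\monom-\beta^{\top}x)<0$ for every nonempty $\S\ne\{\zerovec\}$, which yields $\sigma(\beta)<\degree[\beta]$.

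Finally, for part~(3) I would prove $\sigma(\beta)=\beta_{(n)}$ by sandwiching the integrand. For the lower bound I use the right side of the hypothesis, $\S\subseteq\Delta^{\zerovec}_{n}=\{x\in\Hzone\mid\sum_{j}x_{j}\le n-1\}$: dropping the nonnegative term $\monom$ and using $\beta_{j}\ge\beta_{(n)}$ and $1-x_{j}\ge0$ gives $\monom+\sum_{j}\beta_{j}(1-x_{j})\ge\beta_{(n)}\bigl(n-\sum_{j}x_{j}\bigr)\ge\beta_{(n)}$, so $\sigma(\beta)\ge\beta_{(n)}$. For the upper bound I use the left side: picking an index $k$ with $\beta_{k}=\beta_{(n)}$, the point $\onevec-\onevec[k]$ lies on the common facet $\Delta^{\zerovec}_{n}\cap\Deltaone\subseteq\S$, and since its $k$-th coordinate is $0$ the integrand there equals $0+\beta_{k}=\beta_{(n)}$, so $\sigma(\beta)\le\beta_{(n)}$. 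The two bounds coincide.
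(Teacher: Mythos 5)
Your proof is correct, and two of its pieces coincide with the paper's own argument: part (2) (evaluate the minimand at $\onevec\in\Hzone$) and the upper bound in part (3) (evaluate at a vertex $\onevec-\onevec[k]$ of $\Delta^{\zerovec}_{n}\cap\Deltaone$ with $\beta_{k}=\beta_{(n)}$; the paper substitutes all $n$ such vertices and takes the minimum, you only need the one). Where you genuinely diverge, your route is more elementary. For the lower bound in part (3), the paper computes $\max_{x\in\S}\beta^{\top}x$ exactly: it sandwiches this maximum between the maxima over $\Delta^{\zerovec}_{n}\cap\Deltaone$ and over $\Delta^{\zerovec}_{n}$, uses the vertex structure of $\Delta^{\zerovec}_{n}$ and positivity of $\beta$ to show both equal $\sum_{j=1}^{n-1}\beta_{(j)}$, and then concludes $\sigma(\beta)\ge\degree[\beta]-\sum_{j=1}^{n-1}\beta_{(j)}=\beta_{(n)}$. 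Your pointwise estimate $\monom+\sum_{j}\beta_{j}(1-x_{j})\ge\beta_{(n)}\bigl(n-\sum_{j}x_{j}\bigr)\ge\beta_{(n)}$ reaches the same conclusion using only the inequality description $\sum_{j}x_{j}\le n-1$ of $\Delta^{\zerovec}_{n}$, so it invokes each inclusion in the hypothesis exactly once and avoids the polyhedral argument about where $\beta^{\top}x$ is maximized. For the strict inequality in part (1), the paper asserts the pointwise chain $\monom\le\min_{j}x_{j}<\beta^{\top}x$ for all $x\in\S$, which is false at $x=\zerovec$ (both sides vanish there); your case analysis correctly identifies $x=\zerovec$ as the unique equality point and deduces $\sigma(\beta)<\degree[\beta]$ for every nonempty $\S\neq\{\zerovec\}$. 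This is more than pedantry: for $\S=\{\zerovec\}$ one actually has $\sigma(\beta)=\degree[\beta]$, so the strict bound in the proposition needs that degenerate case excluded, a caveat the paper's statement and proof both leave implicit. In short, your proof is valid, slightly more careful on part (1), and shorter on part (3); the paper's version of part (3) buys the exact value $\max_{x\in\S}\beta^{\top}x=\sum_{j=1}^{n-1}\beta_{(j)}$ as a by-product, which your argument does not produce.
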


The proof is moved to Appendix~\ref{sec:app1}. \blue{The case $\S=\Delta_{n}$ is not covered in the above proposition since the error over $\Delta_{n}$ was already dealt with in Proposition~\ref{prop:cvxubsimpl} and hence we would have no use of the bounds on $\sigma(\beta)$ in this case.} 

To establish \blue{an} upper bound on $\monom - \cvxf(x)$, we define the following constants for every linear underestimator $\sigma(\beta) + \sum_{j}\beta_{j}(x_{j}-1)$:
\begin{equation}\label{eq:Cdef}
\begin{split}
\ratio := \max_{j}\frac{\beta_{j}}{\bb_{j}}, 
\quad &\erropt[\beta,\bb] := \left(1 - \frac{\sigma(\beta)}{\degree[\beta]} \right)^{\frac{\degree[\beta]}{\ratio}}, \quad \text{for } \bb\ge\onevec. 
\end{split}
\end{equation}
It is clear that \blue{$\ratio < \degree[\beta]$} and so $ 0 < \ratio/\degree[\beta] < 1$. Since $0\le\sigma(\beta) < \degree[\beta]$ by Proposition~\ref{prop:sigmad}, we have $0 < \erropt[\beta,\bb] \le 1$. For any $\beta\in\B$, $\ratio[][\cdot]$ is a nonincreasing function and so $\erropt[\beta,\cdot]$ is also a nonincreasing function:
\begin{equation}\label{eq:Cinc}
0 < \erropt[\beta,\bb^{\prime}]\le \erropt[\beta,\bb]\le 1, \quad \onevec\le\bb\le\bb^{\prime}.
\end{equation}
We do not know how $\erropt[\cdot,\bb]$ behaves. The significance of the scalar $\erropt[\beta,\bb]$ is as follows.

{
\renewcommand{\g}{\varphi_{\beta,\bb}}
\begin{lemma}\label{lem:g}
Define $\g\colon t \mapsto \degree[\beta] - \sigma(\beta) + t - \degree[\beta]t^{\frac{\ratio}{\degree[\beta]}}$. For $\bb,\beta$ with $\bb\ngtr\beta$, \[\erropt[\beta,\bb] = \max_{t\in[0,1]} \min\{t, \g(t)\}.\]
\end{lemma}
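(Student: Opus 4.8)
The plan is to identify $\erropt[\beta,\bb]$ as the unique point where the line $t\mapsto t$ meets the curve $\g$, and to show this crossing point is the maximizer of $\min\{t,\g(t)\}$. For brevity write $d:=\degree[\beta]$, $s:=\sigma(\beta)$, $\rho:=\ratio$, and $p:=\rho/d$, so that $\g(t)=d-s+t-d\,t^{p}$ and $\erropt[\beta,\bb]=(1-s/d)^{1/p}$. By Proposition~\ref{prop:sigmad} we have $0\le s<d$, whence $1-s/d\in(0,1]$ and so $\erropt[\beta,\bb]\in(0,1]$; this first confirms that the claimed value is a legitimate point of $[0,1]$.

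The first substantive step is a direct computation that $C:=\erropt[\beta,\bb]$ is a fixed point of $\g$. Since $C^{p}=1-s/d$, we get $d\,C^{p}=d-s$, and therefore $\g(C)=d-s+C-(d-s)=C$. Hence $\min\{C,\g(C)\}=C$, which already gives the lower bound $\max_{t\in[0,1]}\min\{t,\g(t)\}\ge C$.

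The crucial step is to extract from the hypothesis $\bb\ngtr\beta$ the monotonicity of $\g$ on $[0,1]$. The negation of $\bb>\beta$ supplies an index $j$ with $\bb_{j}\le\beta_{j}$, so $\rho=\max_{j}\beta_{j}/\bb_{j}\ge 1$. Differentiating, $\g'(t)=1-\rho\,t^{p-1}$, and because $p-1<0$ we have $t^{p-1}\ge 1$ on $(0,1]$, giving $\g'(t)\le 1-\rho\le 0$. Thus $\g$ is non-increasing on $[0,1]$. I expect this to be the main obstacle, in the sense that it is the one place where the assumption $\bb\ngtr\beta$ is genuinely needed: without $\rho\ge 1$ the function $\g$ would turn upward inside $[0,1]$ and could exceed $C$ past the crossing, so the lemma would fail; recognizing that $\rho\ge1$ is exactly equivalent to $\g$ decreasing on $[0,1]$ is the linchpin.

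To conclude I would analyze $h(t):=\g(t)-t=d-s-d\,t^{p}$, whose derivative $h'(t)=-\rho\,t^{p-1}<0$ shows $h$ is strictly decreasing on $(0,1]$ with unique zero at $t=C$. Consequently $\g(t)>t$ for $t<C$ and $\g(t)<t$ for $t>C$, so $\min\{t,\g(t)\}=t$ on $[0,C]$ (increasing up to $C$) and $\min\{t,\g(t)\}=\g(t)$ on $[C,1]$ (bounded above by $\g(C)=C$ thanks to the monotonicity of $\g$). Therefore the maximum over $[0,1]$ equals $C=\erropt[\beta,\bb]$. The only delicate point is the behavior near $t=0$, where $\g'$ blows up, but this causes no difficulty: $\g(0)=d-s>0$ places the crossing strictly inside $(0,1]$, and the sign analysis of $h$ is valid on all of $(0,1]$.
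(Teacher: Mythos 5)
Your proof is correct, and it reaches the conclusion by a genuinely simpler route than the paper's. Both arguments share the same skeleton: verify that $\erropt[\beta,\bb]$ is the unique fixed point of the function $\varphi_{\beta,\bb}(t) = \degree[\beta]-\sigma(\beta)+t-\degree[\beta]t^{\ratio/\degree[\beta]}$, and use the strict monotonicity of $\varphi_{\beta,\bb}(t)-t$ to see that $\min\{t,\varphi_{\beta,\bb}(t)\}$ equals $t$ exactly on $[0,\erropt[\beta,\bb]]$. The difference lies in how the hypothesis $\bb\ngtr\beta$ (equivalently $\ratio\ge 1$) is used to control $\varphi_{\beta,\bb}$ to the right of the fixed point. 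The paper proves the auxiliary inequality $\varphi_{\beta,\bb}(1)\le\varphi_{\beta,\bb}(\erropt[\beta,\bb])$ via the chain $\left(1-\sigma(\beta)/\degree[\beta]\right)^{\degree[\beta]/\ratio}\ge\left(1-\sigma(\beta)/\degree[\beta]\right)^{\degree[\beta]}\ge 1-\sigma(\beta)$, where the second step needs its own monotonicity argument, and then argues by contradiction, using convexity of $\varphi_{\beta,\bb}$, that the stationary point $t_{0}=\ratio^{\degree[\beta]/(\degree[\beta]-\ratio)}$ must exceed $\erropt[\beta,\bb]$. You instead bound the derivative directly: $\varphi_{\beta,\bb}'(t)=1-\ratio\, t^{\ratio/\degree[\beta]-1}\le 1-\ratio\le 0$ on $(0,1]$, since $t^{\ratio/\degree[\beta]-1}\ge 1$ there. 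This shows $\varphi_{\beta,\bb}$ is non-increasing on the entire interval (equivalently, $t_{0}\ge 1$, which is stronger than the paper's $t_{0}>\erropt[\beta,\bb]$), so the maximum structure falls out immediately, with no auxiliary inequality and no contradiction argument. Your version is shorter, localizes the role of $\bb\ngtr\beta$ exactly where it matters, and correctly flags that the blow-up of $\varphi_{\beta,\bb}'$ near $t=0$ is harmless; everything needed downstream (the unimodal shape of $\min\{t,\varphi_{\beta,\bb}(t)\}$ used in Lemma~\ref{lem:g2}) is delivered equally well by either argument.
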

\begin{proof}
Since $\ratio/\degree[\beta]\in(0,1)$, $\g$ is convex over $[0,\infty)$. It is decreasing only over $[0,t_{0}]$, where $t_{0} := \ratio^{\degree[\beta]/(\degree[\beta]-\ratio)}$ is the unique stationary point of $\g$. Note that $\g(0) = \degree[\beta] - \sigma(\beta) > 0$ and observe that $\erropt[\beta,\bb]$, which lies in $(0,1]$,  is the unique fixed point of $\g$ on $\real$. Hence $\min\{t,\g(t)\} = t$ if and only if $t\in[0,\erropt[\beta]]$. The assumption $\bb\ngtr\beta$ is equivalent to $\ratio \ge 1$. Therefore $\degree[\beta]/\ratio \le \degree[\beta]$. We claim that \[ \left(1 - \frac{\sigma(\beta)}{\degree[\beta]}\right)^{\frac{\degree[\beta]}{\ratio}} \ge \left(1 - \frac{\sigma(\beta)}{\degree[\beta]}\right)^{\degree[\beta]} \ge 1-\sigma(\beta). \] The first inequality is obvious whereas the second is due to the monotonicity of the function $\sigma\mapsto (1 - \frac{\sigma}{\degree[\beta]})^{\degree[\beta]} + \sigma - 1$ on $[0,\degree[\beta]]$. Thus we have argued that $\g(1) \le \g(\erropt[\beta,\bb])$. Now the monotone behavior of $\g$ on $[t_{0},\infty)$ means that $t_{0} > \erropt[\beta,\bb]$ because otherwise we would have the contradiction $\g(1) > \g(\erropt[\beta,\bb])$. This implies that the maximum value of $\min\{t,\g(t)\}$ on the $[0,1]$ interval occurs at $t=\erropt[\beta,\bb]$ and, since this is a fixed point, it is equal to $\erropt[\beta,\bb]$.
\end{proof}

Since we need $\bb\ngtr\beta$ in the above lemma and forthcoming results, define 
\begin{equation}\label{eq:K}
\begin{split}
\K &:= \{\bb\in\nonnegreal[n]\mid \onevec \le \bb \le \power, \: \bb \ngtr \beta \ \, \forall \beta\in\B\}\\
&= \{\bb\in\nonnegreal[n]\mid \onevec \le \bb \le \power, \: \bb_{j}\le \min_{\beta\in\B}\beta_{j} \ \text{ for some } j\}.
\end{split}
\end{equation}
The assumption $\B\subseteq\onevec + \nonnegreal[n]$ makes it obvious that $\onevec\in\K$. The structure of $\g$ discussed in the proof of Lemma~\ref{lem:g} implies the following claim. 

\begin{lemma}\label{lem:g2}
For every $\bb\in\K$, \[\max_{t\in[0,1]}\inf_{\beta\in\B}\min\{t,\g(t)\} \;=\; \inf_{\beta\in\B}\max_{t\in[0,1]}\min\{t,\g(t)\} \;=\;\inf_{\beta\in\B}\erropt[\beta,\bb]. \]
\end{lemma}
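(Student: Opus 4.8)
The plan is to treat the two equalities separately, since the second is bookkeeping and the first carries all the content. For the right-hand equality $\inf_{\beta\in\B}\max_{t\in[0,1]}\min\{t,\g(t)\}=\inf_{\beta\in\B}\erropt[\beta,\bb]$, I would invoke Lemma~\ref{lem:g} termwise: membership $\bb\in\K$ guarantees $\bb\ngtr\beta$ for every $\beta\in\B$, so for each fixed $\beta$ we have $\max_{t\in[0,1]}\min\{t,\g(t)\}=\erropt[\beta,\bb]$, and taking the infimum over $\beta$ on both sides closes the gap.

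The substance is the minimax interchange $\max_{t}\inf_{\beta}\min\{t,\g(t)\}=\inf_{\beta}\max_{t}\min\{t,\g(t)\}$. I would first point out that the standard route through a minimax theorem (such as Sion's) is unavailable, because the inner payoff is the minimum of the concave map $t\mapsto t$ and the convex map $t\mapsto\g(t)$ and hence is not concave in the maximizing variable $t$. This failure of concavity is the main obstacle, and I plan to circumvent it by exploiting a structural feature already recorded inside the proof of Lemma~\ref{lem:g}: for each $\beta$ the inner payoff $h_{\beta}(t):=\min\{t,\varphi_{\beta,\bb}(t)\}$ coincides with the identity $t$ on all of $[0,\erropt[\beta,\bb]]$, so that its maximizer $\erropt[\beta,\bb]$ is simultaneously the location and the value of its peak. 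Informally, every inner function attains its maximum on the diagonal $w=t$.

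With this in hand I would set $c^{\ast}:=\inf_{\beta\in\B}\erropt[\beta,\bb]$ and prove two matching bounds. For the upper bound, fix $t\in[0,1]$; for every $\beta\in\B$, $\inf_{\beta'\in\B}h_{\beta'}(t)\le h_{\beta}(t)\le\max_{s\in[0,1]}h_{\beta}(s)=\erropt[\beta,\bb]$, and taking the infimum over $\beta$ gives $\inf_{\beta}h_{\beta}(t)\le c^{\ast}$ for every $t$, whence $\max_{t}\inf_{\beta}h_{\beta}(t)\le c^{\ast}$. For the matching lower bound I would evaluate at the single point $t=c^{\ast}\in[0,1]$: since $c^{\ast}\le\erropt[\beta,\bb]$ for every $\beta$, the diagonal identity gives $h_{\beta}(c^{\ast})=c^{\ast}$ for every $\beta$, so $\inf_{\beta}h_{\beta}(c^{\ast})=c^{\ast}$ and therefore $\max_{t}\inf_{\beta}h_{\beta}(t)\ge c^{\ast}$. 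Combining the two bounds yields the interchange and identifies the common value as $c^{\ast}=\inf_{\beta}\erropt[\beta,\bb]$.

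A few routine checks round out the plan: that $c^{\ast}\in[0,1]$, which follows from $\erropt[\beta,\bb]\in(0,1]$ in \eqref{eq:Cinc}; that the outer maximum is genuinely attained, which holds because the value realized at $t=c^{\ast}$ already meets the upper bound; and that the boundary case $c^{\ast}=0$ is harmless, since $h_{\beta}(0)=0$ for every $\beta$ as $\g(0)=\degree[\beta]-\sigma(\beta)>0$. I expect the cleanest feature of the argument to be that it never uses attainment or concavity in $\beta$---everything runs through infima---so it applies verbatim to infinite $\B$.
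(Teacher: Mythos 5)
Your proof is correct and is exactly the argument the paper intends: the paper omits an explicit proof, asserting only that the structure of $\varphi_{\beta,\kappa}$ recorded in the proof of Lemma~\ref{lem:g} (namely that $\min\{t,\varphi_{\beta,\kappa}(t)\}$ coincides with $t$ precisely on $[0,\mathscr{C}(\beta,\kappa)]$ and peaks there) implies the claim. Your two bounds---weak duality for $\max\inf\le\inf\max$, and evaluation at $t=\inf_{\beta}\mathscr{C}(\beta,\kappa)$ via the diagonal identity for the reverse inequality---are the natural completion of that one-line justification, including the correct observation that nothing requires attainment of the infimum or finiteness of the index set.
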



We are now ready to state our upper bound on error from the convex underestimator $\cvxf$.

\begin{theorem}\label{thm:errenv01}
\[
\err{\graph[\S][\cvxf]} \:\le\: \inf_{\beta\in\B}\erropt[\beta,\bbmax],
\] where $\bbmax$ is a maximal element of $\K$ under the partial order $\le$. In particular, if there exists some $j$ such that $\power_{j} \le \beta_{j}$ for all $\beta\in\B$, then 
\[
\err{\graph[\S][\cvxf]} \;\le\; \erropt[\beta^{\ast},\power] \;:=\; \inf_{\beta\in\B}\erropt[\beta,\power],\] and this bound is tight only if $\beta^{\ast}=\power$ and is attained only at the point $\erropt[\power,\power]^{1/\degree}\onevec\in\relint{\{\zerovec,\onevec\}}$.
\end{theorem}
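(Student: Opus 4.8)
The plan is to reduce the statement to a one–parameter estimate and then feed it into Lemma~\ref{lem:g}. Fix $x\in\S\subseteq\Hzone$ and $\beta\in\B$, and abbreviate $t:=\monom\in[0,1]$. Since $\cvxf(x)$ is a pointwise supremum that majorizes both the constant $0$ and the affine form $\sigma(\beta)+\sum_{j}\beta_{j}(x_{j}-1)=\sigma(\beta)-\degree[\beta]+\beta^{\top}x$, I have $\cvxf(x)\ge\max\{0,\sigma(\beta)-\degree[\beta]+\beta^{\top}x\}$. Subtracting from $\monom$ gives
\[
\monom-\cvxf(x)\;\le\;\min\bigl\{\,t,\;\degree[\beta]-\sigma(\beta)+t-\beta^{\top}x\,\bigr\}.
\]
Everything now hinges on comparing the second entry with the function $\varphi_{\beta,\bb}$ of Lemma~\ref{lem:g}.

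The single inequality I need is, for any $\bb\in\K$ (so $\onevec\le\bb\le\power$ and $\bb\ngtr\beta$),
\[
(\monom)^{\ratio/\degree[\beta]}\;=\;\prod_{j=1}^{n}x_{j}^{\power_{j}\ratio/\degree[\beta]}\;\le\;\frac{1}{\degree[\beta]}\sum_{j=1}^{n}\beta_{j}x_{j}.
\]
I would prove it in two steps. Weighted AM--GM with the weights $\beta_{j}/\degree[\beta]$, which sum to $1$, gives $\prod_{j}x_{j}^{\beta_{j}/\degree[\beta]}\le\beta^{\top}x/\degree[\beta]$; and since $\ratio=\max_{i}\beta_{i}/\bb_{i}\ge\beta_{j}/\bb_{j}$ while $\bb_{j}\le\power_{j}$, one has $\power_{j}\ratio\ge\beta_{j}$, so $x_{j}^{\power_{j}\ratio/\degree[\beta]}\le x_{j}^{\beta_{j}/\degree[\beta]}$ because $x_{j}\in[0,1]$. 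Multiplying the factors yields the display. Rearranged, this reads $\degree[\beta]-\sigma(\beta)+t-\beta^{\top}x\le\varphi_{\beta,\bb}(t)$, whence $\monom-\cvxf(x)\le\min\{t,\varphi_{\beta,\bb}(t)\}\le\max_{s\in[0,1]}\min\{s,\varphi_{\beta,\bb}(s)\}=\erropt[\beta,\bb]$ by Lemma~\ref{lem:g}. As this holds for every $x\in\S$ and $\beta\in\B$, I conclude $\err{\graph[\S][\cvxf]}\le\inf_{\beta\in\B}\erropt[\beta,\bb]$ for each $\bb\in\K$; taking $\bb=\bbmax$ and invoking the monotonicity \eqref{eq:Cinc} gives the sharpest bound. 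In the special case, the hypothesis makes $\power\in\K$, and since $\bb\le\power$ for all $\bb\in\K$, $\power$ is the unique maximal element, so $\bbmax=\power$.

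For the necessity claims I would trace equality through the chain with $\bb=\power$, assuming the bound $\erropt[\beta^{\ast},\power]$ is attained at some $x^{\ast}\in\S$. From the proof of Lemma~\ref{lem:g}, $\erropt[\beta^{\ast},\power]$ is the unique fixed point of $\varphi_{\beta^{\ast},\power}$ and the unique maximizer of $\min\{s,\varphi_{\beta^{\ast},\power}(s)\}$ on $[0,1]$; equality therefore forces $\f(x^{\ast})=\erropt[\beta^{\ast},\power]$ and $\cvxf(x^{\ast})=0$. Equality in the weighted AM--GM step forces $x^{\ast}_{1}=\cdots=x^{\ast}_{n}$, i.e. $x^{\ast}=c\onevec$ with $c=\erropt[\beta^{\ast},\power]^{1/\degree}\in(0,1)$, so $x^{\ast}\in\relint{\{\zerovec,\onevec\}}$; and equality in the exponent comparison, where each $x^{\ast}_{j}=c\in(0,1)$, forces $\power_{j}\,\ratio[\beta^{\ast}][\power]=\beta^{\ast}_{j}$ for every $j$, i.e. $\beta^{\ast}$ is a positive scalar multiple of $\power$. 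Identifying that multiple as $1$ yields $\beta^{\ast}=\power$ and pins the attaining point to $\erropt[\power,\power]^{1/\degree}\onevec$.

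The upper bound (the first two paragraphs) I expect to be routine once the weighted AM--GM inequality is isolated; its only delicate point is keeping track of the two competing lower bounds on $\cvxf$ through the $\min$. The real obstacle is the necessity direction. Because the attaining point must simultaneously satisfy the weighted AM--GM equality (forcing it onto the diagonal), sit at the fixed point of $\varphi_{\beta^{\ast},\power}$ (forcing its monomial value), and make the exponent comparison an equality (forcing $\beta^{\ast}$ proportional to $\power$), the argument must interlock three equality conditions at once. Collapsing the proportionality constant to $1$, so that $\beta^{\ast}=\power$ exactly, is the subtlest step and is where I would concentrate the effort, using that $\beta^{\ast}\in\B\subseteq\onevec+\nonnegreal[n]$ realizes the infimum of $\erropt[\,\cdot\,,\power]$ over $\B$.
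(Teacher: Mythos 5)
Your upper-bound argument is correct and is, in essence, the paper's own proof: the same two inequalities (the exponent comparison $\monom[\beta]\ge(\monom)^{\ratio[\beta][\bb]}$, valid because $x\in\Hzone$ and $\bb\in\K$, followed by the weighted arithmetic--geometric means inequality) give $\beta^{\top}x\ge\degree[\beta]\,(\monom)^{\ratio[\beta][\bb]/\degree[\beta]}$, and Lemma~\ref{lem:g} converts this into $\erropt[\beta,\bb]$; your identification of $\power$ as the unique maximal element of $\K$ in the special case also matches the paper. Your one deviation is a simplification: by bounding $\monom-\cvxf(x)$ for each fixed $\beta\in\B$ and taking the infimum only at the end, you use nothing beyond the trivial direction $\max_{x}\inf_{\beta}\le\inf_{\beta}\max_{x}$, so the interchange Lemma~\ref{lem:g2} and the intermediate-value-theorem step of the paper's proof become unnecessary.

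The gap is exactly the step you flagged, and you should know it cannot be repaired: the equality conditions do not force the proportionality constant to equal $1$, and the conclusion that tightness requires $\beta^{\ast}=\power$ with attainment at $\erropt[\power,\power]^{1/\degree}\onevec$ is false as stated. Your analysis already extracts everything that equality gives: $x^{\ast}=c\onevec$ with $c^{\degree}=\erropt[\beta^{\ast},\power]$, $\cvxf(x^{\ast})=0$, and $\beta^{\ast}_{j}=\ratio[\beta^{\ast}][\power]\,\power_{j}$ for all $j$. Nothing more is true. Take $n=2$, $\power=\onevec$, $\S=[0,1]^{2}$ and $\B=\{(2,2)\}$, so that the hypothesis of the second part holds and $\beta^{\ast}=(2,2)$. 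The minimum of $x_{1}x_{2}-2(x_{1}-1)-2(x_{2}-1)$ over $[0,1]^{2}$ is attained at $\onevec$, so $\sigma((2,2))=1$ and $\cvxf(x)=\max\{0,\,2x_{1}+2x_{2}-3\}$; moreover $\ratio[(2,2)][\onevec]=2$ and $\degree[\beta^{\ast}]=4$, hence $\erropt[(2,2),\onevec]=(1-\tfrac{1}{4})^{4/2}=\tfrac{9}{16}$. A direct check gives $\max_{x\in[0,1]^{2}}\,x_{1}x_{2}-\cvxf(x)=\tfrac{9}{16}$, attained at $\tfrac{3}{4}\onevec$: the bound is tight, yet $\beta^{\ast}=(2,2)\neq\power$, and the maximizer is not $\erropt[\power,\power]^{1/2}\onevec=\tfrac{1}{2}\onevec$ (recall $\sigma(\power)=1$, so $\erropt[\power,\power]=\tfrac{1}{4}$).

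You are in good company, because the paper's own proof stumbles at precisely this point: it asserts that equality in \eqref{eq:xbeta} (with $\bb=\power$, $\beta=\beta^{\ast}$) holds if and only if $\bb=\power=\beta^{\ast}$, whereas at a point with all coordinates in $(0,1)$ equality holds if and only if $\beta^{\ast}_{j}=\ratio[\beta^{\ast}][\power]\power_{j}$ for every $j$ --- in the example, $(\tfrac{3}{4})^{4}=((\tfrac{3}{4})^{2})^{2}$ exactly. The correct necessary condition is the one your argument delivers and stops at: $\beta^{\ast}$ must be a positive scalar multiple of $\power$, and attainment can occur only at $\erropt[\beta^{\ast},\power]^{1/\degree}\onevec\in\relint{\{\zerovec,\onevec\}}$, which is consistent with the example since $(\tfrac{9}{16})^{1/2}=\tfrac{3}{4}$. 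So do not spend effort trying to collapse the multiple to $1$; prove the proportionality statement instead, which is both correct and sharp.
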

\begin{proof}
Choose some $\bb\in\K$. For every $x\in\Hzone$ and $j$, $\beta_{j}/\bb_{j} \le \ratio$ gives us $x_{j}^{\beta_{j}/\bb_{j}} \ge x_{j}^{\ratio}\ge0$ and $\bb_{j}\le\power_{j}$ gives us $x_{j}^{\bb_{j}}\ge x_{j}^{\power_{j}}\ge0$.  Thus
\begin{subequations}
\begin{equation}\label{eq:xbeta}
\monom[\beta] = \prod_{j=1}^{n}\left(x_{j}^{\frac{\beta_{j}}{\bb_{j}}} \right)^{\bb_{j}} \ge \prod_{j=1}^{n}\left(x_{j}^{\ratio} \right)^{\bb_{j}} = \prod_{j=1}^{n}\left(x_{j}^{\bb_{j}} \right)^{\ratio} \ge \prod_{j=1}^{n}\left(x_{j}^{\power_{j}} \right)^{\ratio}   = \left(\monom \right)^{\ratio}. 
\end{equation}
The generalized arithmetic-geometric means inequality tells us that $\sum_{j=1}^{n}\beta_{j}x_{j} \ge \degree[\beta](x^{\beta} )^{1/\degree[\beta]}$, which combined with \eqref{eq:xbeta} leads to $\sum_{j=1}^{n}\beta_{j}x_{j} \ge \degree[\beta]\left(\monom \right)^{\ratio/\degree[\beta]}$. Therefore 
\begin{eqnarray*}
\monom - \cvxf(x) &=&  \min\left\{\monom,\, \inf_{\beta\in\B}\monom - \sigma(\beta) + \degree[\beta] - \sum_{j=1}^{n}\beta_{j}x_{j} \right\} \\
&\le& \min\left\{\monom,\, \inf_{\beta\in\B}\monom - \sigma(\beta) + \degree[\beta] - \degree[\beta]\left(\monom \right)^{\frac{\ratio}{\degree[\beta]}} \right\} \\
&=& \inf_{\beta\in\B}\,\min\left\{\monom,\, \monom - \sigma(\beta) + \degree[\beta] - \degree[\beta]\left(\monom \right)^{\frac{\ratio}{\degree[\beta]}} \right\}, 
\end{eqnarray*}
which leads to
\begin{equation}\label{eq:bdcvx2}
\max_{x\in\S}\,\monom - \cvxf(x) \;\le\; \max_{x\in\S} \inf_{\beta\in\B}\, \min\left\{\monom,\,\monom - \sigma(\beta) + \degree[\beta] - \degree[\beta]\left(\monom \right)^{\frac{\ratio}{\degree[\beta]}} \right\}.
\end{equation}
Since $f(x)=\monom$ is a continuous function with minimum and maximum values $\fmin,\fmax\in[0,1]$ on $\S$, the intermediate value theorem implies that \eqref{eq:bdcvx2} transforms to
\begin{equation}\label{eq:lastbdcvx}
\max_{x\in\S}\,\monom - \cvxf(x) \;\le\; \max_{t\in[0,1]}\,\inf_{\beta\in\B}\,\min\{t,\g(t)\},
\end{equation}
where $\g(t) = \degree[\beta] - \sigma(\beta) + t - \degree[\beta]t^{\frac{\ratio}{\degree[\beta]}}$ as in Lemma~\ref{lem:g}. 
Lemma~\ref{lem:g2} leads to $\max_{x\in\S}\monom - \cvxf(x) \le \inf_{\beta}\erropt[\beta,\bb]$. Since $\bb$ was arbitrarily chosen in $\K$ and we know from \eqref{eq:Cinc} that $\erropt[\beta,\cdot]$ is a nonincreasing function for every $\beta$, we may set $\bb$ equal to a maximal $\bbmax$ to obtain $\max_{x\in\S}\monom - \cvxf(x) \le \inf_{\beta\in\B}\erropt[\beta,\bbmax]$. If $\power_{j}\le \min_{\beta\in\B}\beta_{j}$ for some $j$, then $\power$ is the unique maximal element in $\K$ and setting $\bbmax=\power$ yields the upper bound $\inf_{\beta\in\B}\erropt[\beta,\power]$.
\end{subequations}

The bound $\erropt[\beta^{\ast},\power]$ is tight if and only if there is equality throughout in \eqref{eq:xbeta} with $\bb=\power,\beta=\beta^{\ast}$, and in the means inequality $\sum_{i=1}^{n}\beta^{\ast}_{i}x_{i} \ge \degree[\beta^{\ast}](x^{\beta^{\ast}} )^{1/\degree[\beta^{\ast}]}$. Equation \eqref{eq:xbeta} is an equality if and only if $\bb=\power=\beta^{\ast}$, implying that $\beta^{\ast}=\power$ is a necessary condition for tightness. The means inequality is an equality if and only if $x_{1}=x_{2}=\cdots=x_{n}$ and hence the bound can be attained only at $\erropt[\power,\power]^{1/\degree}\onevec$. 
\end{proof}

\begin{remark}
We will show in Proposition~\ref{prop:Prelax} that $\sigma(\power) = 1$, implying that $\erropt[\power,\power] = (1-1/\degree)^{\degree}$, which is exactly the constant $\erroptcvx$ defined in \eqref{eq:C}, and therefore the above bound is attained only at $(1-1/\degree)\onevec$.
\end{remark}
}

Any polyhedral relaxation of the epigraph of $\monom$ can be encoded by the set $\B$ in equation~\eqref{eq:cvxf}. Hence Theorem~\ref{thm:errenv01} yields an upper bound on the error from any polyhedral relaxation that is chosen apriori. Since we do not know the behavior of $\erropt[\cdot,\bb]$, a analytic expression for the infimum in Theorem~\ref{thm:errenv01} does not seem possible in general. Even if $\B$ is finite, $\erropt[\beta,\bb]$ requires the computation of $\sigma(\beta)$, which we know to be hard in general. However, one may derive upper bounds on the error using the lower bounds on $\sigma(\beta)$ from Proposition~\ref{prop:sigmad}. Note though that this does not help for $\S=\Hzone$ because the lower bound of $0$ on $\sigma(\beta)$ gives a trivial upper bound of 1 on the error. 

We use the bound in Theorem~\ref{thm:errenv01} to derive a degree-dependent bound on the convex envelope error. To do so, let us view this upper bound  from a different perspective. By construction of $\erropt[\beta,\bb]$, in order to obtain  a smaller error bound, we would intuitively want to pick $\B$ such that it contains only those $\beta$ that make $\sigma(\beta)$ to be as high as possible. For $\S=\Hzone$, or more generally $\S$ containing $\onevec$, we know the highest that $\sigma(\beta)$ can be is 1. Hence we could do the following reverse construction --- instead of choosing a set $\B$ and then computing $\sigma(\beta)$ for each $\beta\in\B$ as done before, we could fix $\sigma(\beta)=1$ and find the values of $\beta\ge\onevec$ that enable $1 + \sum_{j=1}^{n}\beta_{j}(x_{j}-1)$ to be a valid linear underestimator (cf. equation~\eqref{eq:cvxf}) to $\monom$ over $\S$. This would alleviate the issue of having to compute $\sigma(\beta)$ for $\erropt[\beta,\bb]$ and could possibly lead to simpler and explicit error bounds that depend only on exponent $\power$ and degree $\degree$. We follow this path for the rest of this section. Note also that the convex envelope of the multilinear monomial $\multilin$ over $\Hzone$ is $\max\{0,1+\sum_{j}(x_{j}-1)\}$, meaning that there is only one $\beta$, the vector $\onevec$, with $\sigma(\onevec)=1$. Thus our forthcoming derivation implies the error from the convex envelope of a multilinear monomial over $\Hzone$.


\newcommand{\nd}[1]{\mathcal{ND}(#1)}

\renewcommand{\B}{\mathcal{B}_{1}}

\subsubsection{Explicit bounds}\label{sec:explicit}
Denote \[\linear(x) := 1 + \sum_{j=1}^{n}\beta_{j}(x_{j}-1), \quad \beta\ge\onevec.\] This linear function is exact at $x=\onevec$: $\linear(\onevec)=1=\f(\onevec)$. The convex underestimator on $\monom$ is 
\begin{equation}\label{eq:cvxf2}
\cvxf[\g](x) = \max\left\{0, \sup_{\beta\in\B}\linear(x)\right\}, \ \; \text{ where } \B := \{\beta\ge\onevec\mid \linear(x)\le\monom \ \forall x\in\S \}.
\end{equation}
$\B$ is a closed convex set\footnote{It does not seem that $\B$ will be a polyhedron even for $\S=\Hzone$. Since general monomials are not vertex-extendable over $\Hzone$, it is not clear whether the validity of $\linear$ over the entire box can be certified by checking at only a finite number of points.}, due to linearity of $\linear(x)$ in $\beta$ for fixed $x$, and it represents all the linear functions that are exact at $x=\onevec$ and underestimate $\monom$ everywhere on $\S\subseteq\Hzone$. Clearly, $\beta\le\beta^{\prime}$ implies $\linear(x) \ge \linear[\beta^{\prime}](x)$ for all $x\in\Hzone$, and so $\beta\in\B$ implies $\beta^{\prime}\in\B$. But then we could simply delete such a $\beta^{\prime}$ from $\B$ without affecting the supremum in $\cvxf[\g]$. Hence we define the nondominated subset of $\B$ to be the following: 
\begin{equation}\label{eq:B1}
\nd{\B} := \{\beta\in\B\mid \nexists\, \onevec\le\beta^{\prime}\lneqq\beta \text{ s.t. } \linear[\beta^{\prime}](x) \le \monom \ \forall x\in\S   \},
\end{equation}
so that 
\begin{equation}\label{eq:cvxf3}
\cvxf[\g](x) = \max\left\{0, \sup_{\beta\in\nd{\B}}\linear(x)\right\}. 
\end{equation}

A strong error bound from $\cvxf[\g]$ would obviously depend on the elements in $\nd{\B}$ (cf. Theorem~\ref{thm:errenv01}), making it important to obtain a (partial) characterization of $\B$ and $\nd{\B}$ based on the structure of $\S$. We mention two cases where $\nd{\B}$ is easily seen to be equal to $\{\onevec\}$, the most trivial value. 
\begin{description}
\item[Multilinear over $\Hzone$.] Here $\power=\onevec,\S=\Hzone$ and equation~\eqref{eq:crama} tells us $\onevec\in\B$, and therefore $\nd{\B}=\{\onevec\}$.

We will generalize this in Proposition~\ref{prop:Prelax} by showing that $\nd{\B}=\{\power\}$ when $\S\supseteq\Deltaone[\lambda]$.

\item[Subsets of $\Delta^{\zerovec}_{n}$.] Here $\power$ is arbitrary and $\S\subseteq\Delta^{\zerovec}_{n}=\conv{(\{0,1\}^{n}\setminus\{\onevec\})}$. We know that $\linear$ is valid to $\S$ if and only if $\sigma(\beta)\ge 1$, where $\sigma(\beta) = \min_{x\in\S}\monom - \sum_{j}\beta_{j}(x_{j}-1)$. Clearly $\linear$ is valid to $\S$ if it is valid to $\Delta^{\zerovec}_{n}$.  We argued in Proposition~\ref{prop:sigmad} that $\sigma(\beta)=\beta_{(n)}$ for $\Delta^{\zerovec}_{n}$ and since $\beta\ge\onevec$ by assumption, it follows that $\linear$ is valid to $\S$ for all $\beta\ge\onevec$. Therefore $\nd{\B}=\{\onevec\}$.
\end{description}

For an arbitrary integer exponent $\power$ and $\S\nsubseteq\Delta^{\zerovec}_{n}$, it is not at all obvious what the set $\B$ should be. Note that this includes the case of a monomial over $\S=\Hzone$. As a generalization of the multilinear case, is it true that $\power\in\B$? The function $\linear[\power](\cdot)$ is Taylor's first-order approximation of $\monom$ at the point $x=\onevec$. Having $\power\in\B$ would mean that the gradient inequality at $x=\onevec$ holds true, which is not at all obvious since $\monom$ is a nonconvex function. We show in Proposition~\ref{prop:Prelax} that $\power\in\B$ is always true, regardless of $\S$, and in fact construct a $\beta\le\power$ with $\beta\in\B$, so that $\power\notin\nd{\B}$ in general. This $\beta$ depends on $\S$ and is constructed by taking projections of $\S$ onto each coordinate. We also present some conditions under which $\nd{\B}$ can be (partially) characterized.
 
The following technical lemma will be useful. It is proved in Appendix~\ref{sec:app1}.

{
\renewcommand{\chi}{\sigma}
\renewcommand{\beta}{\lambda}
\renewcommand{\xi}{t}
\begin{lemma}\label{lem:exp1}
Let $\beta_{1}\in\posint,\beta_{2}\ge 1$. Consider the univariate polynomial $\phi(\chi) := (1-\chi)^{\lambda_{1}}+\lambda_{2}\chi-1$ which has a trivial root at $0$.

\begin{enumerate}
\item If $\beta_{2}\ge\beta_{1}$, $\phi(\chi) > 0$ for all $\chi\in(0,1]$.
\end{enumerate}
For $\beta_{2}<\beta_{1}$, 
\begin{enumerate}[resume]
\item $\phi$ has exactly one root in $(0,1]$, denoted $\chi^{\ast}$, and $\chi^{\ast} > 1-(\beta_{2}/\beta_{1})^{\frac{1}{\beta_{1}-1}}$.

\item $\phi(\chi) < 0$ for all $\chi\in(0,\chi^{\ast})$ and $\phi(\chi) > 0$ for all $\chi\in(\chi^{\ast},1]$.

\item $(1-\chi)^{\beta_{1}} > 1-\beta\chi$ for all $\beta\in(\beta_{2},\infty),\chi\in[\chi^{\ast},1]$, and $(1-\chi)^{\beta_{1}} < 1-\beta\chi$ for all $\beta\in[1,\beta_{2}),\chi\in[0,\chi^{\ast})$ 
\end{enumerate}
Finally, there is a root in $(1,\infty)$ if and only if $\beta_{1}$ is odd, and there is a root in $(-\infty,0)$ if and only if $\beta_{2}>\beta_{1}$.
\end{lemma}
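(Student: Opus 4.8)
The plan is to analyze $\phi$ through its first two derivatives, exploiting that $\phi$ is strictly convex on $(-\infty,1)$ when $\lambda_1\ge 2$. Concretely, $\phi'(\sigma)=\lambda_2-\lambda_1(1-\sigma)^{\lambda_1-1}$ and $\phi''(\sigma)=\lambda_1(\lambda_1-1)(1-\sigma)^{\lambda_1-2}$, and since $1-\sigma>0$ for $\sigma<1$ we get $\phi''>0$ there. Hence $\phi$ is strictly convex on $(-\infty,1)$, $\phi'$ is strictly increasing on $(-\infty,1]$, $\phi$ has at most two roots in this range (one being the given $\sigma=0$), and $\phi$ has a unique minimizer at the stationary point $\sigma_0:=1-(\lambda_2/\lambda_1)^{1/(\lambda_1-1)}$, the solution of $\phi'(\sigma)=0$. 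The case $\lambda_1=1$ makes $\phi$ linear and is trivial, so I assume $\lambda_1\ge 2$. The whole argument is bookkeeping around the sign of $\sigma_0$, which is governed by $\lambda_2$ versus $\lambda_1$: since $\lambda_2/\lambda_1<1\iff\lambda_2<\lambda_1$ and $x\mapsto x^{1/(\lambda_1-1)}$ is increasing, $\sigma_0>0\iff\lambda_2<\lambda_1$, $\sigma_0=0\iff\lambda_2=\lambda_1$, and $\sigma_0<0\iff\lambda_2>\lambda_1$. The evaluations $\phi(0)=0$ and $\phi(1)=\lambda_2-1\ge 0$ are used throughout.

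For (1), when $\lambda_2\ge\lambda_1$ we have $\sigma_0\le 0$, so $\phi$ is strictly increasing on $(0,1]$; with $\phi(0)=0$ this gives $\phi>0$ on $(0,1]$. For (2) and (3), when $\lambda_2<\lambda_1$ we have $\sigma_0\in(0,1)$, so $\phi$ strictly decreases from $\phi(0)=0$ on $(0,\sigma_0)$ (hence $\phi<0$ there) and strictly increases on $(\sigma_0,1]$ up to $\phi(1)=\lambda_2-1\ge 0$. This forces a unique root $\sigma^*\in(\sigma_0,1]$ with $\phi<0$ on $(0,\sigma^*)$ and $\phi>0$ on $(\sigma^*,1]$, the root being exactly $\sigma^*=1$ when $\lambda_2=1$; uniqueness in $(0,1]$ is secured by the at-most-two-roots bound from strict convexity. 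Because $\phi(\sigma_0)<\phi(0)=0=\phi(\sigma^*)$ and $\phi$ is increasing on $[\sigma_0,1]$, we get $\sigma^*>\sigma_0$, which is precisely the claimed bound $\sigma^*>1-(\lambda_2/\lambda_1)^{1/(\lambda_1-1)}$.

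Part (4) I would reduce to (3) by a monotone-shift argument. Writing $\phi_\lambda(\sigma):=(1-\sigma)^{\lambda_1}+\lambda\sigma-1$ (so $\phi=\phi_{\lambda_2}$), one has $\phi_\lambda(\sigma)-\phi(\sigma)=(\lambda-\lambda_2)\sigma$, which for fixed $\sigma>0$ is strictly increasing in $\lambda$. Thus $\lambda>\lambda_2$ yields $\phi_\lambda(\sigma)>\phi(\sigma)\ge 0$ on $[\sigma^*,1]$ by (3) (as $\sigma\ge\sigma^*>0$), the first inequality; and $\lambda<\lambda_2$ yields $\phi_\lambda(\sigma)<\phi(\sigma)<0$ on $(0,\sigma^*)$, the second inequality (at $\sigma=0$ both sides equal $1$, so the strict statement there should be read on the open interval). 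The two ``finally'' claims follow from a sign analysis off $[0,1]$: for $\sigma>1$, if $\lambda_1$ is even then $\lambda_1-1$ is odd so $(1-\sigma)^{\lambda_1-1}<0$ and $\phi'(\sigma)>\lambda_2>0$, whence $\phi$ increases from $\phi(1)\ge 0$ with no root, while if $\lambda_1$ is odd then $\phi(\sigma)\to-\infty$ yet $\phi>0$ just right of $1$ (using $\phi'(1)=\lambda_2>0$), so the intermediate value theorem produces a root in $(1,\infty)$; for $\sigma<0$, if $\lambda_2\le\lambda_1$ then $\sigma_0\ge 0$, so $\phi$ decreases on $(-\infty,0)$ and stays above $\phi(0)=0$ with no root, whereas if $\lambda_2>\lambda_1$ then $\sigma_0<0$ is the minimizer with $\phi(\sigma_0)<0$ and $\phi\to+\infty$ as $\sigma\to-\infty$, giving a root in $(-\infty,\sigma_0)\subset(-\infty,0)$.

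The main obstacle is not any single estimate but the uniform handling of the degenerate cases threaded through every part: the value $\lambda_2=1$ (where $\phi(1)=0$ forces $\sigma^*=1$ and collapses several strict inequalities), the endpoint $\sigma=0$ in (4), and the parity-dependent behavior outside $[0,1]$. Organizing everything around the single quantity $\sigma_0$ and the strict convexity on $(-\infty,1)$ keeps these cases under control and avoids ad hoc computation.
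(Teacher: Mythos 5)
Your proof is correct, and for claims (1)--(3) it follows essentially the same route as the paper: both arguments analyze $\phi'(\sigma)=\lambda_2-\lambda_1(1-\sigma)^{\lambda_1-1}$, locate the stationary point $\sigma_0=1-(\lambda_2/\lambda_1)^{1/(\lambda_1-1)}$, and run a decrease-then-increase argument using $\phi(0)=0$, $\phi(1)=\lambda_2-1\ge 0$ and the intermediate value theorem (your strict-convexity framing is just a repackaging of the same derivative analysis). Where you genuinely diverge is in claim (4) and the final sentence. For (4), the paper introduces the auxiliary polynomial $g(\sigma)=(1-\sigma)^{\lambda_1}+\lambda\sigma-1$, applies claims (1)--(3) to $g$ to produce its root $\sigma^{**}$, and then compares $\sigma^{*}$ with $\sigma^{**}$ by evaluating $g(\sigma^{*})$; moreover, its written argument only treats $\lambda>\lambda_2$ and leaves the case $\lambda\in[1,\lambda_2)$ implicit. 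Your monotone-shift identity $\phi_\lambda(\sigma)-\phi(\sigma)=(\lambda-\lambda_2)\sigma$ bypasses the auxiliary root entirely, handles both halves symmetrically in two lines, and also surfaces the (harmless) misstatement at the endpoint $\sigma=0$, where both sides equal $1$, so the second inequality must be read on the open interval --- a point the paper's proof never addresses. For the final claims, the paper complements to $t^{\lambda_1}-\lambda_2 t+\lambda_2-1$ and invokes Descartes' rule of signs, whereas you argue directly from calculus: the parity of $\lambda_1$ controls the sign of $\phi'$ (or forces $\phi\to-\infty$) on $(1,\infty)$, and the sign of $\sigma_0$ together with $\phi\to+\infty$ as $\sigma\to-\infty$ settles $(-\infty,0)$. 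Both are valid; Descartes' rule delivers the root counts with less case analysis, while your asymptotic argument is self-contained and makes the two ``if and only if'' directions more transparent. Both proofs dismiss the degenerate case $\lambda_1=1$ up front, so you lose nothing there.
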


\begin{remark}\label{rem:root}
\blue{Finding an analytic expression for the root $\chi^{\ast}$ seems difficult, and an algebraic root may not even exist, as can be verified using computational algebra software for the polynomial $\phi(\sigma) = (1-\sigma)^{6} + 3\sigma - 1$, whose roots are in bijection to that of $\sigma^{6} - 3\sigma + 2$ \blue{under the mapping $\sigma\mapsto 1-\sigma$}. However, our forthcoming analysis circumvents this issue since it does not depend on the exact value of $\chi^{\ast}$.}
\end{remark}
}

We also need to introduce some notation. For every $i$, denote the projection of $\S$ onto the $x_{i}$-subspace by \[\proj_{x_{i}}\S := [1-\sigma^{1}_{i},1-\sigma^{2}_{i}],\quad \text{ for some } 0 \le \sigma^{2}_{i}\le\sigma^{1}_{i}\le 1,\] and define 
\begin{equation}\label{eq:bbdef}
\gamma_{i} := \begin{cases} 
\displaystyle\frac{1 - (1-\sigma^{2}_{i})^{\power_{i}}}{\sigma^{2}_{i}} & \text{ if } \sigma^{2}_{i} > 0,\medskip\\
\power_{i} &   \text{ if } \sigma^{2}_{i} = 0,
\end{cases}\qquad  i=1,\dots,n.
\end{equation}
This $\gamma$ is exactly the $\gamma$ from the statement of Theorem~\ref{thm:converr01} in \textsection\ref{sec:mainmonom}. Note that if $\S\cap E_{i}\neq\emptyset$, $\S\cap E_{j}\neq\emptyset$ for distinct $i,j$, then $\sigma^{2}=\zerovec$.
\begin{lemma}\label{lem:bb}
$1\le\gamma_{i}<\power_{i}$ for every $i$ with $\sigma^{2}_{i}>0$. Hence $\gamma=\power$ if and only if $\sigma^{2}=\zerovec$.
\end{lemma}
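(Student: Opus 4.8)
The plan is to put $\gamma_{i}$ into closed form and then read off both inequalities directly. Fix an index $i$ with $\sigma^{2}_{i}>0$, and abbreviate $p:=\power_{i}\in\posint$ and $a:=1-\sigma^{2}_{i}$, so that $a\in[0,1)$. Using the elementary factorization $1-a^{p}=(1-a)\sum_{k=0}^{p-1}a^{k}$ and dividing by $\sigma^{2}_{i}=1-a>0$, the definition in \eqref{eq:bbdef} becomes the geometric-sum identity
\[
\gamma_{i}\;=\;\frac{1-(1-\sigma^{2}_{i})^{\power_{i}}}{\sigma^{2}_{i}}\;=\;\sum_{k=0}^{\power_{i}-1}(1-\sigma^{2}_{i})^{k}.
\]
Everything I need is visible from this representation, which I expect to be the cleanest route.

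For the lower bound, the $k=0$ summand equals $1$ and every remaining summand is nonnegative, so $\gamma_{i}\ge 1$. For the upper bound, observe that the sum has exactly $\power_{i}$ terms, each of which is at most $1$ because $a\in[0,1)$, and that every term with $k\ge 1$ is \emph{strictly} below $1$ whenever $a<1$. Hence, as soon as $\power_{i}\ge 2$, at least one such strict term is present and $\gamma_{i}=1+\sum_{k=1}^{\power_{i}-1}a^{k}<\power_{i}$, which is the claimed strict inequality. (An equivalent derivation clears the denominator and applies the strict form of Bernoulli's inequality $(1-\sigma^{2}_{i})^{\power_{i}}>1-\power_{i}\sigma^{2}_{i}$, valid for $\power_{i}\ge 2$ and $\sigma^{2}_{i}\in(0,1]$, including the endpoint $\sigma^{2}_{i}=1$.) The equivalence then falls out with no further work: if $\sigma^{2}=\zerovec$ then the defining case $\sigma^{2}_{i}=0$ gives $\gamma_{i}=\power_{i}$ for all $i$, so $\gamma=\power$; conversely, if $\sigma^{2}\neq\zerovec$ I pick an index $i$ with $\sigma^{2}_{i}>0$ and invoke the strict bound $\gamma_{i}<\power_{i}$ to conclude $\gamma\neq\power$.

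The one place that genuinely needs care — and the step I regard as the main obstacle — is the exponent-one coordinates. When $\power_{i}=1$ the geometric sum collapses to its single $k=0$ term, giving $\gamma_{i}=1=\power_{i}$, so the strict inequality $\gamma_{i}<\power_{i}$ (and with it the ``only if'' direction of the equivalence) genuinely fails unless $\power_{i}\ge 2$. I would therefore state the strictness explicitly as contingent on $\power_{i}\ge 2$, since the multilinear case $\power=\onevec$ is exactly the boundary at which $\gamma=\power$ holds for every $\S$ regardless of $\sigma^{2}$; the bound $\gamma_{i}\le\power_{i}$ that the later degree estimate $(1-1/\degree[\gamma])^{\degree[\gamma]}\le\erroptcvx$ actually relies on remains valid in all cases.
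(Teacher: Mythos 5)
Your argument is correct and is essentially the paper's own proof in different packaging: the paper likewise reduces \eqref{eq:bbdef} to the geometric-sum identity $\frac{1-\chi^{\power_{i}}}{1-\chi}=1+\chi+\cdots+\chi^{\power_{i}-1}$, concludes that $\sigma\mapsto\frac{1-(1-\sigma)^{\power_{i}}}{\sigma}$ is decreasing on $[0,1]$, and computes the limit $\power_{i}$ as $\sigma\to 0$ by L'H\^opital, whereas you read both bounds directly off the representation $\gamma_{i}=\sum_{k=0}^{\power_{i}-1}(1-\sigma^{2}_{i})^{k}$.

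The caveat you raise about exponent-one coordinates is not pedantry; it is a genuine (if benign) defect in the lemma as stated, and the paper's proof does not avoid it. If $\power_{i}=1$ and $\sigma^{2}_{i}>0$, then $\gamma_{i}=\sigma^{2}_{i}/\sigma^{2}_{i}=1=\power_{i}$, so the strict inequality fails, and with it the ``only if'' direction of the equivalence (take the single coordinate with $\sigma^{2}_{i}>0$ to have $\power_{i}=1$). In the paper's language: for $\power_{i}=1$ the geometric sum is constant, so the function is only non-strictly decreasing, and the monotonicity-plus-limit argument yields $\gamma_{i}\le\power_{i}$ rather than $\gamma_{i}<\power_{i}$. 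Your repair --- asserting strictness only for $\power_{i}\ge 2$ --- is the right one, and nothing downstream is harmed: the later results use only $\onevec\le\gamma\le\power$ (as in Corollary~\ref{corr:cvxdegbound}) and the implication $\sigma^{2}=\zerovec\Rightarrow\gamma=\power$, which holds by definition; and in Claim~\ref{claim:proof1}, where Lemma~\ref{lem:exp1} is invoked with $\lambda_{1}=\power_{i}$ and $\lambda_{2}=\gamma_{i}$, the problematic case $\power_{i}=\gamma_{i}=1$ is trivially fine since $(1-\sigma)\ge 1-\beta_{i}\sigma$ for any $\beta_{i}\ge 1$ and $\sigma\ge 0$.
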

\begin{proof}
$\gamma_{i}\ge1$ is obvious due to $\sigma^{2}_{i}\in(0,1)$ and $\power_{i}\ge1$. Since $\power_{i}\in\posint$, we have $\frac{1-\chi^{\power_{i}}}{1-\chi} = 1+\chi+\chi^{2}+\dots+\chi^{\power_{i}-1}$, making $\frac{1-\chi^{\power_{i}}}{1-\chi}$ an increasing function on $[0,1]$. Hence, by complementing to $\sigma=1-\chi$, $\frac{1 - (1-\sigma)^{\power_{i}}}{\sigma}$ is a decreasing function on $[0,1]$. L'H\^opital's rule gives $\lim_{\sigma\to 0}\frac{1 - (1-\sigma)^{\power_{i}}}{\sigma} = \power_{i}$.
\end{proof}

\begin{proposition}\label{prop:Prelax}
We have the following:
\begin{enumerate}
\item $\gamma,\power\in\B$.
\end{enumerate}
Consider any $\beta\ge\onevec$ and suppose $I := \{i\mid \S\cap \relint E_{i}\neq\emptyset, \beta_{i}\le\power_{i}\}$ is nonempty. For  $i\in I$  denote $1-\tau^{2}_{i} = \max\{x_{i}\mid x\in \S\cap  E_{i}\}$.  
\begin{enumerate}[resume]
\item $\beta\in\B$ only if $\power_{i}(1-\tau^{2}_{i})^{\power_{i}-1} \le \beta_{i} \le \power_{i}$ for $i\in I$ with $\tau^{2}_{i}>\sigma^{2}_{i}$, and $\gamma_{i} \le \beta_{i} \le \power_{i}$ for $i\in I$ with $\tau^{2}_{i}=\sigma^{2}_{i}$.  
\item Suppose $\onevec\in\S$. Then $\beta\in\B$ only if $\beta_{i} = \power_{i}$ for all $i\in I$.
\end{enumerate}
Finally, 
\begin{enumerate}[resume]
\item If $\S\supseteq\Deltaone[\lambda]:=\conv{(\cup_{i=1}^{n}\{\onevec-\lambda_{i}\onevec[i] \})}$ for some $\zerovec<\lambda\le\onevec$, then $\nd{\B}=\{\power\}$.
\end{enumerate}
\end{proposition}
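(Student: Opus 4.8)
The plan is to prove the five claims in the order stated, exploiting that on $\Hzone$ the monomial splits coordinatewise and that the exponents $\power_{j}$ are integers. For the membership claim (item~1) I would establish, for every $x\in\S$, the chain
\[
\monom \;\ge\; 1+\sum_{j=1}^{n}\left(x_{j}^{\power_{j}}-1\right) \;\ge\; 1+\sum_{j=1}^{n}\gamma_{j}(x_{j}-1).
\]
The first inequality is the multilinear underestimate: writing $u_{j}:=x_{j}^{\power_{j}}\in[0,1]$, it reads $\prod_{j}u_{j}\ge\cvxenv{\f[m]}{\Hzone}(u)=\max\{0,1+\sum_{j}(u_{j}-1)\}$, i.e. the convex-envelope bound from \eqref{eq:crama} evaluated at $u$. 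The second inequality splits into $n$ independent univariate bounds $x_{j}^{\power_{j}}-1\ge\gamma_{j}(x_{j}-1)$; since $t\mapsto t^{\power_{j}}$ is convex on $[0,1]$ and, by construction, $\gamma_{j}$ is the slope of its chord joining $t=1-\sigma^{2}_{j}$ to $t=1$, this bound holds exactly on $\{t\le 1-\sigma^{2}_{j}\}\supseteq\proj_{x_{j}}\S$. Replacing $\gamma$ by $\power$ (recall $\gamma\le\power$ by Lemma~\ref{lem:bb}) the same chain holds, the second step now being the gradient inequality of $t^{\power_{j}}$ at $t=1$, valid on all of $[0,1]$; hence $\gamma,\power\in\B$.

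For the necessary conditions (items~2 and~3) I would restrict a candidate $\beta\in\B$ to the edge $E_{i}$. There the monomial reduces to $x_{i}^{\power_{i}}$ and the linear form to $1+\beta_{i}(x_{i}-1)$, so validity on $\S\cap E_{i}$ is equivalent to $\phi(s_{i})\ge0$ in the variable $s_{i}=1-x_{i}$, where $\phi(s)=(1-s)^{\power_{i}}+\beta_{i}s-1$ is exactly the polynomial of Lemma~\ref{lem:exp1} with $\lambda_{1}=\power_{i},\lambda_{2}=\beta_{i}$. For $i\in I$ we have $\beta_{i}\le\power_{i}$; evaluating at the largest attainable $x_{i}$, i.e. $s_{i}=\tau^{2}_{i}$ (attained since $\S$ is compact), gives $\phi(\tau^{2}_{i})\ge0$. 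Lemma~\ref{lem:exp1}(3) then forces $\tau^{2}_{i}\ge\sigma^{\ast}$, and Lemma~\ref{lem:exp1}(2)'s estimate $\sigma^{\ast}>1-(\beta_{i}/\power_{i})^{1/(\power_{i}-1)}$ yields the tangent bound $\beta_{i}>\power_{i}(1-\tau^{2}_{i})^{\power_{i}-1}$; when $\tau^{2}_{i}=\sigma^{2}_{i}$, reading $\phi(\tau^{2}_{i})\ge0$ verbatim gives the sharper $\beta_{i}\ge[1-(1-\sigma^{2}_{i})^{\power_{i}}]/\sigma^{2}_{i}=\gamma_{i}$. Item~3 is the degenerate case $\onevec\in\S$: then $\onevec\in\S\cap E_{i}$ makes $\tau^{2}_{i}=\sigma^{2}_{i}=0$ and $\gamma_{i}=\power_{i}$, so the chord bound reads $\beta_{i}\ge\power_{i}$, which with $\beta_{i}\le\power_{i}$ forces $\beta_{i}=\power_{i}$.

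Finally, for item~5, $\S\supseteq\Deltaone[\lambda]$ means $\S$ contains $\onevec$ and, along each $E_{i}$, the whole segment to $\onevec-\lambda_{i}\onevec[i]$, so $\onevec\in\S$ and $\S\cap\relint E_{i}\neq\emptyset$ for every $i$. Given any $\beta\in\B$ and any $i$: if $\beta_{i}\le\power_{i}$ then $i\in I$ and item~3 forces $\beta_{i}=\power_{i}$, while if $\beta_{i}>\power_{i}$ the bound $\beta_{i}\ge\power_{i}$ is immediate; hence $\beta\ge\power$ for every $\beta\in\B$. Combined with $\power\in\B$ from item~1, $\power$ is the minimum of $\B$ under $\le$, so it is the unique nondominated element and $\nd{\B}=\{\power\}$. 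I expect the real obstacle to be item~1's claim $\gamma\in\B$: unlike $\power\in\B$ it asserts validity of a strictly steeper linear form, so the argument must use that $\S$ stays off the facet $x_{i}=1$, and the delicate point is passing from the $n$ univariate chord inequalities to the product inequality — precisely the work done by invoking \eqref{eq:crama} on the $u_{j}=x_{j}^{\power_{j}}$. The edge arguments for items~2 and~3 are then routine modulo Lemma~\ref{lem:exp1}, the only care being to confirm that the binding point $s_{i}=\tau^{2}_{i}$ is attained and lies in $\relint E_{i}$.
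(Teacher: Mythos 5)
Your proof of item~1 is correct but takes a genuinely different route from the paper's. The paper proves $\linear[\gamma](x)\le x^{\power}$ by induction on the number of coordinates lying strictly in $(0,1)$, isolating the univariate base case as Claim~\ref{claim:proof1} and handling the inductive step with a sign argument on $(1+\chi)(1-\gamma_{k+1}\epsilon_{k+1})$. Your chain
\[
x^{\power} \;\ge\; 1+\sum_{j=1}^{n}\bigl(x_{j}^{\power_{j}}-1\bigr) \;\ge\; 1+\sum_{j=1}^{n}\gamma_{j}(x_{j}-1)
\]
replaces that induction by a single application of the multilinear envelope inequality \eqref{eq:crama} at $u_{j}=x_{j}^{\power_{j}}\in[0,1]$, plus $n$ independent univariate secant bounds: since $\gamma_{j}$ is exactly the slope of the secant of $t\mapsto t^{\power_{j}}$ through $t=1-\sigma^{2}_{j}$ and $t=1$, convexity gives $t^{\power_{j}}\ge 1+\gamma_{j}(t-1)$ for all $t\le 1-\sigma^{2}_{j}$, which covers $\proj_{x_{j}}\S$ (and the $\sigma^{2}_{j}=0$ case is the gradient inequality). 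This is cleaner and more modular; the only thing the paper's induction yields beyond it is strictness of the inequality when two or more coordinates are fractional, which the statement does not need. Your items 2 and the final item ($\nd{\B}=\{\power\}$) then follow the same edge-restriction strategy as the paper, and your ``verbatim'' reading of $\phi(\tau^{2}_{i})\ge 0$ actually makes the $\gamma_{i}\le\beta_{i}$ bound more explicit than the paper's own write-up of item~2.

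There is, however, a genuine (if small) gap in your handling of the degenerate case $\tau^{2}_{i}=\sigma^{2}_{i}=0$, which is precisely the case on which items 3 and 4 rest. When $\onevec\in\S$ one has $\tau^{2}_{i}=0$, so your test point $s_{i}=\tau^{2}_{i}$ gives only $\phi(0)=0\ge 0$, which carries no information, and the ``chord bound'' $\beta_{i}\ge[1-(1-\sigma^{2}_{i})^{\power_{i}}]/\sigma^{2}_{i}$ is a $0/0$ expression: it cannot ``read $\beta_{i}\ge\power_{i}$'' because your item-2 derivation literally divides by $\sigma^{2}_{i}$. The repair is short but is a real additional step: since $\S\cap\relint E_{i}\neq\emptyset$ and $\onevec\in\S$, convexity of $\S$ places the whole segment $\{\onevec-s\onevec[i]\colon 0\le s\le\bar{s}\}$ inside $\S\cap E_{i}$ for some $\bar{s}>0$; if $\beta_{i}<\power_{i}$, then Lemma~\ref{lem:exp1} (items 2--3) makes $\phi(s)<0$ for all sufficiently small $s>0$, contradicting validity, hence $\beta_{i}\ge\power_{i}$. (Equivalently, $\beta_{i}\ge[1-(1-s)^{\power_{i}}]/s$ for all $s\in(0,\bar{s}]$, and let $s\to 0^{+}$.) This is exactly why the paper's proof of item~2 evaluates at ``$\tau$ a small positive real'' when $\tau^{2}_{i}=0$ rather than at $\tau^{2}_{i}$ itself. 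With this two-line fix, your items 3 and 4 go through as written; note also that your reading of $\Deltaone[\lambda]$ as containing the vertex $\onevec$ agrees with \eqref{eq:delta1} and with the paper's own proof of item~4, even though the proposition's inline definition omits that vertex.
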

\begin{proof}\renewcommand{\epsilon}{\sigma}
(1) Observe that showing $\linear(x)\le\monom$ for all $x\in\S$ is equivalent to showing $\linear(x)\le\monom$ for all $x\in\S$ such that $x>\zerovec,x\neq\onevec$. Indeed, $\linear(x)$ is exact at $x=\onevec$ and for any $x\in E_{0}$, $x_{i}=0$ implies that $\linear(x) = 1-\beta_{i}+\sum_{j\neq i}\beta_{j}(x_{j}-1)$ which is nonpositive due to $\beta\ge\onevec$ and $x\in\Hzone$. Therefore to show $\gamma\in\B$, we prove $\linear[\gamma](x)\le\monom$ for every $x\in \S,x>\zerovec,x\neq\onevec$. 

Consider such an $x$ and let $k = |\{i\mid 0 < x_{i} < 1\}|$. Assume wlog that $x_{i}=1-\sigma_{i}$ for $i =1,\dots,k$ with $\sigma_{i}\in[\sigma^{2}_{i},\sigma^{1}_{i}], \sigma_{i}\in(0,1)$, and $x_{i}=1$ for $i \ge k+1$. We must show that \[\prod_{i=1}^{k}(1-\sigma_{i})^{\power_{i}}\ge 1 - \sum_{i=1}^{k}\gamma_{i}\sigma_{i}.\] We argue this inequality by induction on $k$. Take $k=1$. We obtain $(1-\epsilon_{1})^{\power_{1}}\ge 1 - \gamma_{1}\epsilon_{1}$ from the following claim.
 
\begin{claim}\label{claim:proof1}
For any $i$ and $\sigma\in[\sigma^{2}_{i},1]$, we have $(1-\sigma)^{\power_{i}} \ge 1-\beta_{i}\sigma$ for all $\beta_{i}\ge\gamma_{i}$.
\end{claim}
\begin{proof}[Proof of Claim]\renewcommand{\qedsymbol}{$\diamond$}
If $\sigma^{2}_{i}=0$, then $\gamma_{i}=\power_{i}$ and applying the first item in Lemma~\ref{lem:exp1} with $\lambda_{1}=\power_{i}$ and $\lambda_{2}=\beta_{i}$ tells us $(1-\epsilon)^{\power_{i}}\ge 1 - \beta_{i}\epsilon$ for all $\beta_{i}\ge\gamma_{i}$. Otherwise $\sigma^{2}_{i}>0$ and Lemma~\ref{lem:bb} allow\blue{s} us to apply Lemma~\ref{lem:exp1} with $\lambda_{1}=\power_{i}$ and $\lambda_{2}=\gamma_{i}$. It is readily seen from the construction of $\gamma$ in \eqref{eq:bbdef} that $\sigma^{2}_{i}$ is a root of $\phi(\omega) = (1-\omega)^{\power_{i}} + \gamma_{i}\omega-1$ and by the second item of Lemma~\ref{lem:exp1}, it is the unique root in $(0,1]$. Now $\blue{\sigma}\in[\sigma^{2}_{i},1]$ and the fourth item of Lemma~\ref{lem:exp1} yield $(1-\epsilon)^{\power_{i}}\ge 1 - \beta_{i}\epsilon$ for all $\beta_{i}\ge\gamma_{i}$.
\end{proof}

Assume that the inequality is true for $k\ge 1$ and let us argue it for $k+1$. 
The induction hypothesis gives us \[
\prod_{i=1}^{k}(1-\epsilon_{i})^{\power_{i}} - \gamma_{k+1}\epsilon_{k+1} \ge 1 - \sum_{i=1}^{k}\gamma_{i}\sigma_{i} - \gamma_{k+1}\epsilon_{k+1} = 1 + \sum_{i=1}^{n}\gamma_{i}(x_{i}-1).\] 
Let $\prod_{i=1}^{k}(1-\epsilon_{i})^{\power_{i}} = 1+\chi$ for some $\chi \in (-1, 0)$; such a $\chi$  exists because $\epsilon_{i}\in(0,1)$ for $i=1,\dots,k$. Hence, the induction hypothesis becomes \[1+\chi - \gamma_{k+1}\sigma_{k+1} \ge 1 + \sum_{i=1}^{n}\gamma_{i}(x_{i}-1).\] Now, \[ \monom = \prod_{i=1}^{k+1}(1-\epsilon_{i})^{\power_{i}} = (1+\chi)(1-\epsilon_{k+1})^{\power_{k+1}} \ge (1+\chi)(1-\gamma_{k+1}\epsilon_{k+1}),\]
where the inequality is by applying Claim~\ref{claim:proof1} to $i=k+1$, and using $1+\chi > 0$. Since $(1+\chi)(1-\gamma_{k+1}\epsilon_{k+1}) = 1 + \chi -\gamma_{k+1}\epsilon_{k+1} -\gamma_{k+1}\epsilon_{k+1}\chi$ and $\chi < 0, \gamma_{k+1},\epsilon_{k+1} > 0$, we have 
\begin{eqnarray}
\monom > 1 + \chi -\gamma_{k+1}\epsilon_{k+1} \ge 1 + \sum_{i=1}^{n}\gamma_{i}(x_{i}-1), \label{eq:strictcvx}
\end{eqnarray}
where $\ge$ is from the induction hypothesis. This finishes our inductive proof for showing $\gamma\in\B$. Thus every $x\in\S$ with $|\{i\mid x_{i}\in(0,1)\} |\ge 2$ has $\linear[\gamma](x) < \monom$. The closedness of $\B$ under monotonicity and $\gamma\le\power$ give us $\power\in\B$.

(2) Choose some $\blue{i}\in I$. If $\beta_{i}=\power_{i}$, then there is nothing to prove because $\power_{i}\ge\gamma_{i}$ and $\tau^{2}_{i}\in[0,1]$. So assume $\beta_{i}<\power_{i}$. Consider a point $\bar{x}\in\S\cap\relint E_{i}$, which can be written as $\bar{x}_{i}=1-\tau$ and $\bar{x}_{j}=1\ \forall j\neq i$, where $\tau=\tau^{2}_{i}$ if $\tau^{2}_{\blue{i}}>0$, otherwise $\tau$ is a small positive real. Note that $\monom[][\bar{x}] = (1-\tau^{2}_{i})^{\power_{i}}$ and $\linear(\bar{x}) = 1-\beta_{i}\tau$. The second and third items of Lemma~\ref{lem:exp1} with $\lambda_{1}=\power_{i}, \lambda_{2}=\beta_{i}$ tell us that $(1-\tau)^{\power_{i}} <  1-\beta_{i}\tau$ if $\tau \le 1 - (\beta_{i}/\power_{i})^{\frac{1}{\power_{i}-1}}$. This means that $\tau^{2}_{i} \ge 1 - (\beta_{i}/\power_{i})^{\frac{1}{\power_{i}-1}}$, which rearranges to $\beta_{i} \ge \power_{i}(1-\tau^{2}_{i})^{\power_{i}-1}$, is necessary for $\linear$ to be a valid linear underestimator.

(3) It is easy to see that the convexity of $\S$ makes $\onevec\in\S$ equivalent to $\tau^{2}_{i}=0$ for all $i\in I$. We also have $\onevec\in\S$ implying $\sigma^{2}=\zerovec$. Therefore $\gamma=\power$. Now (2) gives us $\beta_{i}=\power_{i}$ for $i\in I$.



(4) The assumption $\S\supseteq\Deltaone[\lambda]$ implies $\S\cap\relint E_{i}\neq\emptyset$ for all $i$, $\onevec\in\S$, $\tau^{2}=\sigma^{2}=\zerovec$ and hence $\gamma=\power$. The claim then follows from (3).
\end{proof}


\begin{remark}
Due to the functions $h_{1}(t) = \power_{i}(1-t)^{\power_{i}-1}$ and $h_{2}(t) = h_{1}(t) - (1 - (1-t)^{\power_{i}})/t$ being nonincreasing \blue{and nonpositive, respectively}, over $[0,1]$, it follows that $\power_{i}(1-\tau^{2}_{i})^{\power_{i}-1} \le \gamma_{i}$ in Proposition~\ref{prop:Prelax}, meaning that the lower bound on $\beta_{i}$ with $\tau^{2}_{i} > \sigma^{2}_{i}$ is weaker than the lower bound on $\beta_{i}$ with $\tau^{2}_{i} = \sigma^{2}_{i}$. This happens because while arguing this part, we used a lower bound on the root of $(1-\sigma)^{\lambda_{1}}+\lambda_{2}\sigma-1$ in $(0,1]$ from Lemma~\ref{lem:exp1}, since finding a analytic expression for the root seems difficult (cf. Remark~\ref{rem:root}). Therefore if $I\setminus I^{\prime}\neq\emptyset$, then there is no guarantee that $\gamma$ is a nondominated point in $\B$.
\end{remark}

\begin{remark}
The second item in Proposition~\ref{prop:Prelax} indicates that a tight lower bound on a valid $\beta$ can get arbitrarily close to $\power$. 
\end{remark}




The vector $\gamma$ in \eqref{eq:bbdef} can be constructed only when projections of $\S$ are readily available or can be computed quickly. When these projections are difficult to compute, we could use the first claim of Proposition~\ref{prop:Prelax} telling us that $\linear[\power]$ is a underestimator of $\monom$. The last item in this proposition provides a clean and simple expression for $\cvxf[\g]$ in \eqref{eq:cvxf3}.

The preceding results on $\B$ and $\nd{\B}$, combined with Theorem~\ref{thm:errenv01}, imply explicit bounds on the error from the convex underestimator $\cvxf[\g]$. Recall the constants from \eqref{eq:Cdef}. Denoting $\erropt[\beta,\beta]$ simply as $\erropt[\beta]$, we have for $\linear[\power]$ and $\linear[\gamma]$, respectively,: \[
\erropt[\power] = \left(1 - \frac{1}{\degree} \right)^{\degree} = \erroptcvx, \qquad \erropt[\gamma] = \left(1 - \frac{1}{\degree[\gamma]} \right)^{\degree[\gamma]},
\] 
where we recall that $\erroptcvx$ was defined in \eqref{eq:C} and $\degree[\gamma]=\sum_{j=1}^{n}\gamma_{j}$. 

\begin{corollary}\label{corr:cvxdegbound}
$\err{\graph[\S][{\cvxf[\g]}]} \le \erropt[\gamma] \le \erroptcvx$, and equality holds throughout if $\zerovec\in\S$ and $\S\supset\Deltaone[\lambda]$ for some $\zerovec<\lambda\le\onevec$.
\end{corollary}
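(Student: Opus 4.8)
The plan is to prove the two inequalities and the equality case separately, using throughout that $\gamma\in\B$ by Proposition~\ref{prop:Prelax}(1), so that $\linear[\gamma]$ is one of the valid linear underestimators appearing in the supremum \eqref{eq:cvxf3}. The whole argument is a specialization of the machinery already built in Theorem~\ref{thm:errenv01} to a single underestimator, the only care needed being that the relevant offset is the fixed value $1$ rather than the tightest touching offset.

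\textbf{Upper bound.} Since $\gamma\in\B$, equation~\eqref{eq:cvxf3} gives $\cvxf[\g](x)\ge\max\{0,\linear[\gamma](x)\}$ pointwise, and as both functions underestimate $\monom$ the error of $\cvxf[\g]$ is at most the error of the single underestimator $\max\{0,\linear[\gamma]\}$. I would bound the latter by rerunning the argument in the proof of Theorem~\ref{thm:errenv01} for the one function $\linear[\gamma]$, i.e.\ with $\bb=\beta=\gamma$ (so that $\ratio[\gamma,\gamma]=1$) and fixed offset $1$. Concretely, $\gamma\le\power$ (Lemma~\ref{lem:bb}) and $x\in\Hzone$ give $x^{\gamma}\ge\monom$, and the generalized arithmetic--geometric means inequality then yields $\sum_{j}\gamma_{j}x_{j}\ge\degree[\gamma](\monom)^{1/\degree[\gamma]}$. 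Substituting this into $\linear[\gamma](x)=1-\degree[\gamma]+\sum_{j}\gamma_{j}x_{j}$ bounds $\monom-\max\{0,\linear[\gamma](x)\}$ above by $\min\{t,g(t)\}$ with $t=\monom$ and $g(t)=\degree[\gamma]-1+t-\degree[\gamma]t^{1/\degree[\gamma]}$. The intermediate value theorem together with Lemma~\ref{lem:g} (its $\sigma=1$, $\ratio=1$ instance) then gives $\max_{t\in[0,1]}\min\{t,g(t)\}=(1-\tfrac{1}{\degree[\gamma]})^{\degree[\gamma]}=\erropt[\gamma]$, which is the first inequality.

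\textbf{Second inequality.} By Lemma~\ref{lem:bb} we have $\gamma\le\power$ and hence $\degree[\gamma]\le\degree$. Since $d\mapsto(1-1/d)^{d}$ is increasing on $[1,\infty)$, this yields $(1-\tfrac{1}{\degree[\gamma]})^{\degree[\gamma]}\le(1-\tfrac{1}{\degree})^{\degree}=\erroptcvx$, so $\erropt[\gamma]\le\erroptcvx$.

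\textbf{Equality.} The containment $\S\supset\Deltaone[\lambda]$ forces $\onevec\in\S$, whence $\sigma^{2}=\zerovec$ and $\gamma=\power$ by Lemma~\ref{lem:bb}; thus $\erropt[\gamma]=\erropt[\power]=\erroptcvx$, making the second inequality an equality. Moreover, Proposition~\ref{prop:Prelax}(4) gives $\nd{\B}=\{\power\}$, so $\cvxf[\g]=\max\{0,\linear[\power]\}$. To make the first inequality tight it suffices to exhibit one point of $\S$ attaining error $\erroptcvx$; the natural candidate, flagged as the unique attainment point in Theorem~\ref{thm:errenv01}, is $(1-\tfrac{1}{\degree})\onevec$. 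Both hypotheses enter precisely here: $\onevec\in\S$ (from $\Deltaone[\lambda]$) together with $\zerovec\in\S$ and convexity place the segment $\conv{\{\zerovec,\onevec\}}$, and in particular $(1-\tfrac{1}{\degree})\onevec$, inside $\S$. At that point $\linear[\power]=1-\sum_{j}\power_{j}/\degree=0$, so $\cvxf[\g]=0$, while $\monom=(1-\tfrac{1}{\degree})^{\degree}=\erroptcvx$, giving error exactly $\erroptcvx$ and matching the upper bound, so equality holds throughout. The main obstacle is not any single estimate but this last geometric step: the verification that $(1-\tfrac{1}{\degree})\onevec\in\S$ is what genuinely requires combining $\zerovec\in\S$ with $\S\supset\Deltaone[\lambda]$, since $\Deltaone[\lambda]$ alone need not contain that diagonal point when $\sum_{j}1/\lambda_{j}>\degree$.
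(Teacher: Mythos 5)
Your proposal is correct and follows essentially the same route as the paper's proof: it bounds the error of $\cvxf[\g]$ by that of the single underestimator $\max\{0,\linear[\gamma]\}$ using the arithmetic--geometric mean machinery behind Theorem~\ref{thm:errenv01} (with the fixed offset $1$), gets the second inequality from $\gamma\le\power$ and the monotonicity of $t\mapsto(1-1/t)^{t}$, and obtains tightness from Proposition~\ref{prop:Prelax} together with the observation that $\zerovec,\onevec\in\S$ and convexity place $(1-\frac{1}{\degree})\onevec$ in $\S$, where the error equals $\erroptcvx$. Your explicit evaluation of the error at that point, and your remark on why $\zerovec\in\S$ is genuinely needed since $\Deltaone[\lambda]$ alone may miss that diagonal point, merely spell out steps the paper leaves implicit.
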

\begin{proof}
We first observe that $\max_{x\in\S}\,\monom - \max\{0,\linear[\gamma](x)\} \le \erropt[\gamma]$. This is obtained by applying Theorem~\ref{thm:errenv01} with $\cvxf$ replaced by $\max\{0,\linear[\gamma]\}$ and noting that $\gamma$ is a maximal element of $\K[\{\gamma\}]$. Since $\gamma\in\B$ by Proposition~\ref{prop:Prelax}, $\cvxf[\g](\cdot)\ge \max\{0,\linear[\gamma](\cdot)\}$ and hence $\max_{x\in\S}\,\monom - \cvxf[\g](x) \:\le\: \erropt[\gamma]$. Since $t\ln{(1 - \frac{1}{t})}$ is concave increasing over $[2,\infty)$ and $\gamma\le\power$ by construction, we get $\erropt[\gamma] \le \erropt[\power]$. If $\S\supseteq\Deltaone[\lambda]$, then $\gamma=\power$ and the last claim in Proposition~\ref{prop:Prelax} tells us $\nd{\B}=\{\power\}$ and $\cvxf[\g](x)=\max\{0,\linear[\power](x)\}$. Now recall Theorem~\ref{thm:errenv01}. We have $\beta^{\ast}=\power$ due to $\nd{\B}=\{\power\}$. This theorem tells us that the bound on $\max_{x\in\S}\,\monom - \cvxf[\g](x)$ can be attained only at $\erropt[\power]\onevec$. The assumptions $\zerovec\in\S$ and $\Deltaone[\lambda]\subset\S$ lead to $\conv\{\zerovec,\onevec\}\subset\S$ and therefore $\erropt[\power]\onevec\in\S$.
\end{proof}

A direct implication is a tight bound on the error of the convex envelope of a multilinear monomial considered over $\Hzone$.
\begin{corollary}\label{corr:converr01multi}
We have $\max_{x\in\Hzone}\,\monom - \max\{0,\linear[\power](x)\} = \erroptcvx$. In particular, for a multilinear monomial, $\err{\graph[][\cvxenv{\f[m]}{\Hzone}]} = (1 - \frac{1}{n})^{n}$.
\end{corollary}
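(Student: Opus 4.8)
The plan is to read off this corollary as the $\S=\Hzone$ instance of Corollary~\ref{corr:cvxdegbound}, so that essentially all of the work has already been carried out upstream. First I would check that $\Hzone$ triggers the \emph{equality} case of that corollary: indeed $\zerovec\in\Hzone$, and $\Hzone\supset\Deltaone[\lambda]$ (e.g.\ for $\lambda=\onevec$, since the defining vertices $\onevec,\onevec-\onevec[1],\dots,\onevec-\onevec[n]$ of $\Deltaone$ all lie in $[0,1]^{n}$). Next I would compute the vector $\gamma$ of \eqref{eq:bbdef} for this domain: since $\proj_{x_{i}}\Hzone=[0,1]$ we have $\sigma^{2}_{i}=0$ for every $i$, so the second branch of \eqref{eq:bbdef} gives $\gamma_{i}=\power_{i}$, i.e.\ $\gamma=\power$. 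Hence $\nd{\B}=\{\power\}$ and $\cvxf[\g](x)=\max\{0,\linear[\power](x)\}$, and Corollary~\ref{corr:cvxdegbound} delivers the chain $\max_{x\in\Hzone}\monom-\max\{0,\linear[\power](x)\}=\erropt[\gamma]=\erropt[\power]=\erroptcvx$, which is exactly the first claim.

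For a fully self-contained argument I would instead split into matching upper and lower bounds. The upper bound $\max_{x\in\Hzone}\monom-\max\{0,\linear[\power](x)\}\le\erropt[\power]=\erroptcvx$ follows from Theorem~\ref{thm:errenv01} applied to the single valid underestimator $\linear[\power]$ (valid because $\power\in\B$ by Proposition~\ref{prop:Prelax}(1), with $\power$ the unique maximal element of $\K[\{\power\}]$). For the reverse inequality I would exhibit the maximizer $x^{\ast}=(1-\tfrac{1}{\degree})\onevec\in\Hzone$: here $\monom[][x^{\ast}]=(1-\tfrac{1}{\degree})^{\degree}$ and $\linear[\power](x^{\ast})=1+\sum_{j}\power_{j}\big(-\tfrac{1}{\degree}\big)=1-\tfrac{1}{\degree}\degree=0$, so $\monom[][x^{\ast}]-\max\{0,\linear[\power](x^{\ast})\}=(1-\tfrac{1}{\degree})^{\degree}=\erroptcvx$. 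This $x^{\ast}$ is precisely the unique maximizer $\erropt[\power]^{1/\degree}\onevec$ flagged in Theorem~\ref{thm:errenv01}, confirming that the bound is attained and hence sharp.

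The multilinear statement is then the substitution $\power=\onevec$, $\degree=n$: equation \eqref{eq:crama} identifies $\cvxenv{\f[m]}{\Hzone}(x)=\max\{0,1+\sum_{j}(x_{j}-1)\}=\max\{0,\linear[\onevec](x)\}$, so the convex envelope error coincides with the quantity just computed and equals $(1-\tfrac{1}{n})^{n}$. I do not anticipate a genuine obstacle, as the content is entirely inherited from Theorem~\ref{thm:errenv01}, Proposition~\ref{prop:Prelax}, and Corollary~\ref{corr:cvxdegbound}. The only two points needing care are confirming that $\Hzone$ activates the equality case of Corollary~\ref{corr:cvxdegbound} rather than merely the inequality, and that \eqref{eq:crama} makes $\max\{0,\linear[\onevec]\}$ \emph{exactly} the convex envelope in the multilinear case, so that the error of the underestimator is genuinely the envelope error.
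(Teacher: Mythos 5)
Your proposal is correct and follows essentially the same route as the paper: the paper's proof likewise invokes Proposition~\ref{prop:Prelax} (last item) to get $\cvxf[\g]=\max\{0,\linear[\power](\cdot)\}$ for $\S=\Hzone\supset\Deltaone$, reads the first equality off Corollary~\ref{corr:cvxdegbound}, and then uses equation~\eqref{eq:crama} to identify $\max\{0,\linear[\onevec](\cdot)\}$ with $\cvxenv{\f[m]}{\Hzone}$ in the multilinear case. Your supplementary direct check that the error equals $\erroptcvx$ at $x^{\ast}=(1-\tfrac{1}{\degree})\onevec$ is a nice touch, since it verifies attainment by explicit computation rather than relying on the ``can only be attained at'' clause of Theorem~\ref{thm:errenv01} combined with $\erropt[\power]^{1/\degree}\onevec\in\S$.
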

\begin{proof}
Since $\S=\Hzone\supset\Deltaone$, the last item in Proposition~\ref{prop:Prelax} tells us $\cvxf[\g](x) = \max\{0,\linear[\power](x)\}$ and then the first equality follows immediately from Corollary~\ref{corr:cvxdegbound}. For a multilinear monomial, equation~\eqref{eq:crama} gives us $\cvxenv{\f[m]}{\Hzone}(x) = \max\{0,\linear[\onevec](x)\}$. The claimed error follows by using $\power=\onevec$ in the expression for $\erroptcvx$. 
\end{proof}

\subsection{Convex hull error} \label{sec:converr}
\begin{proof}[\textbf{Proof of Theorem~\ref{thm:converr01}}]
Since $\concenv{\f}{\S}(x)\le\concf(x)$ for $x\in\Hzone$, the upper bound of $\erropt$ on $\concenv{\f}{\S}(x) - \monom$ is due to $\err{\graph[\S][\concf]} \le \erropt$ from Corollary~\ref{corr:concub}. Similarly the upper bounds on $\monom - \cvxenv{\f}{\S}(x)$ are due to $\cvxf[\g](\cdot)\le \cvxenv{\f}{\S}(\cdot)$ and Corollary~\ref{corr:cvxdegbound}. By Observation~\ref{obs:errmaxenv}, we then have that $\err{\conv{\graphmonom}} \le \max\{\erropt,\erroptcvx \}$. To show this error is upper bounded by $\erropt$, we argue the following. 
\begin{claim}
$\erroptcvx \le \erropt$ for $\degree\ge 2$ and equality holds if and only if $\degree=2$. 
\end{claim}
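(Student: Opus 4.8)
The plan is to reduce the claim, through a chain of reversible algebraic manipulations, to the inequality $\degree^{\degree(\degree-2)} \ge (\degree-1)^{(\degree-1)^{2}}$ that has already been established in Lemma~\ref{lem:dineq}. Recalling from \eqref{eq:C} that $\erropt = (1-\tfrac{1}{\degree})\,\degree^{\frac{1}{1-\degree}}$ and $\erroptcvx = (1-\tfrac{1}{\degree})^{\degree}$, and noting that $1-\tfrac{1}{\degree} > 0$ for $\degree\ge 2$, I would first divide the desired inequality $\erroptcvx \le \erropt$ through by $1-\tfrac{1}{\degree}$ to obtain the equivalent statement
\[
\left(1 - \frac{1}{\degree}\right)^{\degree - 1} \;\le\; \degree^{\frac{1}{1-\degree}}.
\]

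Since both sides are positive, I would next take natural logarithms and then clear the denominator by multiplying through by $-(\degree-1) < 0$, which flips the inequality. Using $\frac{1}{1-\degree} = -\frac{1}{\degree-1}$ on the right and $-\ln(1-\tfrac{1}{\degree}) = \ln\frac{\degree}{\degree-1}$ on the left, the target becomes
\[
(\degree-1)^{2}\,\ln\frac{\degree}{\degree-1} \;\ge\; \ln\degree.
\]
Expanding $\ln\frac{\degree}{\degree-1} = \ln\degree - \ln(\degree-1)$ and collecting the $\ln\degree$ terms via the identity $(\degree-1)^{2}-1 = \degree(\degree-2)$, this is in turn equivalent to
\[
\degree(\degree-2)\,\ln\degree \;\ge\; (\degree-1)^{2}\,\ln(\degree-1),
\]
which is exactly the logarithm of the second inequality in Lemma~\ref{lem:dineq}.

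Because every step above is a reversible equivalence, the claim follows at once from Lemma~\ref{lem:dineq}, and the equality case transfers directly: the only non-strict step in the reduction is the appeal to that lemma, whose equality assertion states that $\degree^{\degree(\degree-2)} = (\degree-1)^{(\degree-1)^{2}}$ holds if and only if $\degree = 2$. Hence $\erroptcvx \le \erropt$ for all $\degree\ge 2$, with equality precisely at $\degree = 2$. I expect no genuine difficulty here beyond spotting that the claim is a repackaging of Lemma~\ref{lem:dineq}; the one point demanding care is the sign bookkeeping when taking logarithms and multiplying by the negative factor $-(\degree-1)$, so that both the direction of the inequality and the equality condition are tracked correctly throughout.
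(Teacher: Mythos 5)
Your proof is correct and follows essentially the same route as the paper: both divide out the positive factor $1-\tfrac{1}{\degree}$ and then reduce, by a chain of equivalences, to the inequality $\degree^{\degree(\degree-2)} \ge (\degree-1)^{(\degree-1)^{2}}$ (with its equality case) from Lemma~\ref{lem:dineq}. The only difference is cosmetic — you work with logarithms where the paper manipulates powers directly, so each line of your chain is just the logarithm of the corresponding line in the paper's proof.
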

\begin{proof}[Proof of Claim]\renewcommand{\qedsymbol}{$\diamond$}
The two constants are  $\erroptcvx=\erropt[\power] = (1-1/\degree)^{\degree}$ and $\erropt = (1 - 1/\degree)\degree^{1/(1-\degree)}$. Therefore the following equivalence holds:
\begin{align*}
\erropt  \ge \erroptcvx 
&\iff \left( \frac{1}{\degree}\right)^{\frac{1}{\degree-1}} \ge \left(1 - \frac{1}{\degree} \right)^{\degree-1} 
 &\iff \; 
\degree \le \left(\frac{\degree}{\degree-1} \right)^{(\degree-1)^{2}} \;\iff\; (\degree-1)^{(\degree-1)^{2}} \le \degree^{\degree(\degree-2)}.
\end{align*}
Lemma~\ref{lem:dineq} proves the last inequality and that it holds at equality only when $\degree=2$. 
\end{proof}
Thus we have $\err{\conv{\graphmonom}} \le \erropt$ for any $\S\subseteq\Hzone$. 

If $\zerovec,\onevec\in\S$,  then setting $t_{1}=0,t_{2}=1$ in Lemma~\ref{lem:lowerbd} yields the critical point to be $\xi^{\prime}=(1/\degree)^{1/(\degree-1)}$ so that \[\phi(\xi^{\prime}) = \xi^{\prime} - {\xi^{\prime}}^{\degree} = \xi^{\prime}(1 - {\xi^{\prime}}^{\degree-1}) = (1/\degree)^{\frac{1}{\degree-1}}( 1 - 1/\degree ) = \erropt.\] Therefore the convex hull error and the concave envelope error are lower bounded by $\erropt$, making each of them equal to $\erropt$. 
\end{proof}

{
\renewcommand{\P}[1][\B]{P_{#1}}
\renewcommand{\B}{\mathcal{B}}
The arguments used in proving Theorem~\ref{thm:converr01} also imply that a family of convex relaxations of $\graphmonom$ has error equal to $\erropt$. Recall the convex underestimator $\cvxf$ from \eqref{eq:cvxf} for any $\B\subseteq\onevec + \nonnegreal[n]$  and consider the convex relaxation \[\P := \{(x,w)\in\Hzone\times\real\mid \cvxf(x) \le w \le \concf(x)\}.\] Note that $x$ is not restricted to be in $\S$ here. Assume $\power\in\B$. Also assume $\onevec\in\S$ so that $\sigma(\beta) \le 1$ for every $\beta\in\B$, as per  Proposition~\ref{prop:sigmad}. We claim that 

\begin{proposition}\label{prop:Perr}
$\err{\P}=\erropt$. 
\end{proposition}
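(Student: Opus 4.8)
The plan is to prove the two inequalities $\err{\P} \le \erropt$ and $\err{\P} \ge \erropt$ separately, using that every $(x,w)\in\P$ satisfies $\cvxf(x) \le w \le \concf(x)$ over the whole box $\Hzone$. As a preliminary step I would record that, since $\power\in\B$ and $\onevec\in\S$, we have $\sigma(\power)=1$ (this is exactly the content of Proposition~\ref{prop:Prelax}(1) combined with $\onevec\in\S$, since $\linear[\power]$ is valid on $\S$ forces $\sigma(\power)\ge 1$, while $\onevec\in\S$ forces $\sigma(\power)\le 1$). Hence the affine piece indexed by $\power$ equals $\linear[\power](x) = 1 + \sum_{j}\power_{j}(x_{j}-1)$, and therefore $\cvxf(x) \ge \max\{0,\linear[\power](x)\}$ for every $x\in\Hzone$.

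For the upper bound I would fix an arbitrary $(x,w)\in\P$ and control $\abs{w-\monom}$ from both sides. On the concave side, $w \le \concf(x)$ gives $w - \monom \le \concf(x) - \monom \le \erropt$, where the last step is Corollary~\ref{corr:concub} with $\S=\Hzone$. On the convex side, $w \ge \cvxf(x) \ge \max\{0,\linear[\power](x)\}$ gives $\monom - w \le \monom - \max\{0,\linear[\power](x)\} \le \erroptcvx$ by Corollary~\ref{corr:converr01multi}, and then $\erroptcvx \le \erropt$ by the Claim proved inside the proof of Theorem~\ref{thm:converr01}. Combining the two yields $\abs{w-\monom}\le\erropt$ for all points of $\P$, hence $\err{\P}\le\erropt$. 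The step that needs care here, and which I expect to be the main obstacle, is precisely this asymmetry: one cannot simply invoke Observation~\ref{obs:errmaxenv}, because its hypothesis $\cvxf(x)\le\monom$ on the \emph{entire} domain $\Hzone$ need not hold (the affine pieces indexed by $\beta\neq\power$ are only certified valid on $\S$, not on all of $\Hzone$). The argument above circumvents this by using only the one-sided facts $w\le\concf(x)$ and $w\ge\max\{0,\linear[\power](x)\}$, so it goes through regardless of whether $\cvxf$ underestimates $\monom$ off of $\S$.

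For the lower bound I would check that the two graph points $(\zerovec,0)$ and $(\onevec,1)$ lie in $\P$. At $\zerovec$ we have $\concf(\zerovec)=0$ and $\cvxf(\zerovec)=\max\{0,\sup_{\beta\in\B}(\sigma(\beta)-\degree[\beta])\}=0$, since $\sigma(\beta)<\degree[\beta]$ by Proposition~\ref{prop:sigmad}(1); at $\onevec$ we have $\concf(\onevec)=1$ and $\cvxf(\onevec)=\max\{0,\sup_{\beta\in\B}\sigma(\beta)\}=1$, using $\sigma(\beta)\le 1$ together with $\sigma(\power)=1$. With both points in $\P$, Remark~\ref{rem:lowerbd} permits applying Lemma~\ref{lem:lowerbd} with $t_{1}=0,t_{2}=1$, whose maximizing value $\phi(\xi^{\prime})$ was computed immediately after that lemma to equal $\erropt$. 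Thus $\err{\P}\ge\erropt$, and together with the upper bound this gives $\err{\P}=\erropt$.
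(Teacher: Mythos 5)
Your proposal is correct and follows essentially the same route as the paper's proof: the upper bound combines Corollary~\ref{corr:concub} on the concave side with Corollary~\ref{corr:converr01multi} on the convex side (using $\power\in\mathcal{B}$ and $\sigma(\power)=1$, plus $\erroptcvx\le\erropt$), and the lower bound verifies $(\zerovec,0),(\onevec,1)\in P_{\mathcal{B}}$ and invokes Lemma~\ref{lem:lowerbd} through Remark~\ref{rem:lowerbd}. Your explicit handling of why Observation~\ref{obs:errmaxenv} cannot be invoked directly (since $\cvxf$ need not underestimate $\monom$ on $\Hzone\setminus\S$) makes rigorous a step the paper leaves implicit, but the argument itself is the same.
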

\begin{proof}
The proof of $\err{\P}\le\erropt$ is the same as that in Theorem~\ref{thm:converr01}, along with using the assumption $\power\in\B$ to get $\max_{x\in\S} \monom - \cvxf(x) \le \max_{x\in\S}\monom - \max\{0,\linear[\power](x)\} = \erroptcvx$. Tightness of this bound is obtained by applying Lemma~\ref{lem:lowerbd} and Remark~\ref{rem:lowerbd} after noting that $(\zerovec,0),(\onevec,1)\in\P$. The point $(\zerovec,0)$ belongs to $\P$ because $\concf(\zerovec)=0$, and $\cvxf(\zerovec) = \max\{0,\sup_{\beta\in\B}\sigma(\beta)-\degree[\beta] \}$, which is equal to $0$ since Proposition~\ref{prop:sigmad} states that $\sigma(\beta) < \degree[\beta]$. The point $(\onevec,1)$ belongs to $\P$ because $\concf(\onevec)=1$, and $\cvxf(\onevec) = \max\{0,\sup_{\beta\in\B}\sigma(\beta)\}$, which is less than equal to 1 due to $\onevec\in\S$.
\end{proof}
}

The next proof is that of the error bounds over a simplex.

\begin{proof}[\textbf{Proof of Theorem~\ref{thm:simplex}}]
The concave envelope error bound is from Proposition~\ref{prop:concubsimpl} and the fact that $\concenv{\f}{\Delta_{n}} \le \concf$. The convex envelope error bound was observed in Proposition~\ref{prop:cvxubsimpl}.  To upper bound $\err{\conv{\graph[\Delta_{n}][\f]}}$, we note that 
\[
\frac{(\monom[][\power])^{1/\degree}}{\degree} \, -\, \frac{\monom[][\power]}{\degree^{\degree}} \;\ge\; \frac{\monom[][\power]}{\degree^{\degree}} \iff \left(\frac{\monom[][\power]}{\degree^{\degree}}\right)^{\frac{\degree-1}{\degree}} \;\le\; \frac{1}{2} \iff 2^{\frac{1}{\degree-1}} \;\le\; \frac{\degree}{\monom[{\power/\degree}][\power]}. 
\] 
Denoting $\power_{(n)} = \max_{i}\power_{i}$, we have $\monom[{\power/\degree}][\power] \le \power_{(n)}^{\sum_{i}\power_{i}/\degree} = \power_{(n)}$. Thus it suffices to show that $\degree/\power_{(n)} \ge 2^{\frac{1}{\degree-1}}$, equivalently, $(\degree/\power_{(n)})^{\degree-1} \ge 2$. Since $\power_{n}\le \degree-1$ due to $n\ge 2$,  
\[ \left(\frac{\degree}{\power_{(n)}}\right)^{\degree-1} \ge \left(\frac{\degree}{\degree-1}\right)^{\degree-1} = \left(1 + \frac{1}{\degree-1} \right)^{\degree-1} \ge 1 + \frac{\degree-1}{\degree-1} = 2,
\]
\blue{where the last inequality is from binomial expansion}.
\end{proof}

{\renewcommand{\Honer}{\Hzone[\frac{1}{r}][1]}
We end by mentioning that for $\S=\Honer$, or equivalently for $\S=\Hzone[1][r]$ upto scaling,  our upper bounds on the convex hull error are the same as those in Theorem~\ref{thm:converr01} whereas a lower bound can be obtained by setting $t_{1}=1/r, t_{2}=1$ in Lemma~\ref{lem:lowerbd}. However these bounds are not tight, which is not all that surprising since we do not know the exact form of the envelopes of a general monomial over $\Honer$. In \textsection\ref{sec:polyerr2}, we consider a multilinear monomial over $\Hzone[1][r]$ and use the explicit characterization of its envelopes to derive tight error bounds. It so happens that  in the multilinear case, the lower bound from Lemma~\ref{lem:lowerbd} with $t_{1}=1,t_{2}=r$ seems to be the convex hull error, a claim that is verified empirically for random $r,n$ and shown to be true for every $r>1$ as $n\to\infty$.
}


\subsection{Comparison with another error bound}\label{sec:compare}
For the problem of optimizing $p\in\real[x]_{m}$ over $\S = \Hzone$: $\zopt[\blue{\Hzone}] = \min\{p(x)\mid x\in\Hzone \}$, \citet{de2010error} present a LP and a SDP relaxation of $\zopt[\blue{\Hzone}]$ based on two different positivstellensatz and also give a common error bound for these relaxations. Their bound is \citep[Theorem 1.4]{de2010error}: \[
\zopt[\blue{\Hzone}] - \tilde{z}^{\delta}_{\blue{\Hzone}} \le \frac{L(p)}{\delta}\binom{m+1}{3}n^{m},
\] 
where $\tilde{z}^{\delta}_{\Hzone}$ is either of their two relaxations, $\delta\ge m$ is an integer with $n\delta$ being a degree bound on polynomials in the positivstellensatz, and \[L(p) = \max_{\power}\,\abs{c_{\power}}\frac{\prod_{j}\power_{j}!}{\degree[\power]!}.\] As $\delta\to\infty$, the two relaxations converge to $\zopt[\blue{\Hzone}]$ (the SDP relaxation has finite convergence). Corollary~\ref{corr:L} states that the monomial convexification approach would  yield a error bound, \blue{as per our analysis},  of $\zopt[\blue{\Hzone}] - \zmon[\blue{\Hzone}] \le L^{\prime}(p)\binom{n+m}{n}$ for $L^{\prime}(p)$ defined in \eqref{eq:L}. This bound was  weakened subsequently in Corollary~\ref{corr:L2} for ease of computation.

{
\renewcommand{\degree}{m}
\blue{
We note that for the LP and SDP relaxations to provide a better worst case guarantee, the degrees of the polynomials considered in the respective positivstellensatz must grow cubic in the degree of $p(x)$.}


\begin{proposition}
For $p\in\real[x]_{m}$ with $c_{\power}=0,\pm 1$, and fixed $n$, the worst case error bound from $\tilde{z}^{\delta}_{\Hzone}$ is better than the worst case error bound from $\zmon[\Hzone]$ only if $\delta = {\Omega}(m^{3})$.
\end{proposition}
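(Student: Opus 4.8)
The plan is to reduce each guarantee to its worst case over admissible $p$ (those with every $c_{\power}\in\{0,\pm1\}$ and total degree at most $m$), and then compare the two resulting functions of $m$ and $\delta$. Since a better dKL guarantee means its worst-case bound is the smaller of the two, the ``only if'' statement will follow by extracting the threshold on $\delta$ at which this happens and showing that threshold is $\Omega(m^{3})$.

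First I would determine $\sup_{p}L(p)$. Because $|c_{\power}|\le 1$, one has $L(p)\le\max_{\power}\frac{\prod_{j}\power_{j}!}{(\sum_{j}\power_{j})!}$, and the reciprocal of this ratio is the multinomial coefficient $\binom{\sum_{j}\power_{j}}{\power_{1},\dots,\power_{n}}$, a positive integer and hence at least $1$. Thus $L(p)\le 1$, with equality attained by $p=x_{1}^{m}$ (all degree on one variable), so $\sup_{p}L(p)=1$. Next I would determine $\sup_{p}L^{\prime}(p)$. With $|c_{\power}|=1$, the definition in \eqref{eq:L} makes $L^{\prime}(p)$ the larger of $\max_{c_{\power}>0}\mathscr{C}^{2}_{d_{\power}}$ and $\max_{c_{\power}<0}\mathscr{C}^{1}_{d_{\power}}$, each constant taken at the degree $d_{\power}$ of its monomial. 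Using the monotonicity of $\mathscr{C}^{1}_{d},\mathscr{C}^{2}_{d}$ in $d$ together with the inequality $\mathscr{C}^{2}_{d}\le\mathscr{C}^{1}_{d}$ proved inside the proof of Theorem~\ref{thm:converr01}, this maximum is attained by a negatively-signed monomial of full degree $m$, giving $\sup_{p}L^{\prime}(p)=\left(1-\frac{1}{m}\right)m^{\frac{1}{1-m}}$.

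Substituting these worst-case constants, the worst-case de Klerk--Laurent bound is $\frac{1}{\delta}\binom{m+1}{3}n^{m}$ and the worst-case monomial-convexification bound is $\left(1-\frac{1}{m}\right)m^{\frac{1}{1-m}}\binom{n+m}{n}$. The former is the smaller of the two precisely when
\[
\delta \;>\; \delta^{\ast}(m)\;:=\;\frac{\binom{m+1}{3}\,n^{m}}{\left(1-\frac{1}{m}\right)m^{\frac{1}{1-m}}\,\binom{n+m}{n}},
\]
so it suffices to prove $\delta^{\ast}(m)=\Omega(m^{3})$. For this I would use $\binom{m+1}{3}=\Theta(m^{3})$, $\binom{n+m}{n}=\Theta(m^{n})$, and $\left(1-\frac{1}{m}\right)m^{\frac{1}{1-m}}\to 1$, which yield $\delta^{\ast}(m)=\Theta\!\big(m^{3-n}n^{m}\big)$. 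For fixed $n\ge 2$ the factor $n^{m}$ outgrows every power of $m$, so $\delta^{\ast}(m)/m^{3}\to\infty$ and in particular $\delta^{\ast}(m)=\Omega(m^{3})$ (indeed $\omega(m^{3})$). Hence a better dKL guarantee forces $\delta>\delta^{\ast}(m)=\Omega(m^{3})$, proving the claim.

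The only genuinely delicate steps are the two supremum computations: the multinomial identity that pins $\sup_{p}L(p)$ to $1$, and the combined use of monotonicity with $\mathscr{C}^{2}_{d}\le\mathscr{C}^{1}_{d}$ that pins $\sup_{p}L^{\prime}(p)$ to the degree-$m$ value. The final growth comparison is routine, but it is essential to retain the $n^{m}$ factor from the de Klerk--Laurent bound: discarding it would leave only a $\Theta(m^{3-n})$ threshold and destroy the cubic lower bound once $n\ge 3$. The $n^{m}$ term is exactly what makes the positivstellensatz degree bound $n\delta$ grow at least cubically (in fact super-polynomially) in $m$.
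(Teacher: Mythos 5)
Your proof is correct (with the $n\ge 2$ caveat you yourself flag), and although it follows the same skeleton as the paper's proof --- reduce each guarantee to its worst-case constant over admissible $p$, extract a threshold on $\delta$, then do asymptotics --- it differs in one decisive detail, and that detail is in your favor. The paper never computes $\sup_{p}L(p)$ exactly: it only invokes the lower bound $L(p)\ge \frac{1}{m!}$ together with $L^{\prime}(p)\le \left(1-\frac{1}{m}\right)m^{\frac{1}{1-m}}$, arriving at the necessary condition $\delta\ge\hat{\delta}:=\binom{m+1}{3}n^{m}\big/\bigl(m!\,\left(1-\frac{1}{m}\right)m^{\frac{1}{1-m}}\binom{n+m}{n}\bigr)$, which is exactly your $\delta^{\ast}(m)$ divided by $m!$. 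That extra $\frac{1}{m!}$ is fatal to the final step: simplifying, $\hat{\delta}=\frac{m^{2}(m+1)}{6\,m^{\frac{1}{1-m}}\prod_{k=1}^{m}\left(1+\frac{k}{n}\right)}$, and for fixed $n$ the product $\prod_{k=1}^{m}\left(1+\frac{k}{n}\right)\ge\left(1+\frac{1}{n}\right)^{m}$ grows exponentially in $m$, so $\hat{\delta}\to 0$; the paper's concluding assertion that this quantity is $\Omega(m^{3})$ does not hold as written (it would hold in the opposite regime, $n$ growing like $m^{2}$ or faster, not for fixed $n$). Your route repairs precisely this: by pinning $\sup_{p}L(p)=1$ exactly (the multinomial-coefficient observation, equality at $p=\pm x_{1}^{m}$) and $\sup_{p}L^{\prime}(p)=\left(1-\frac{1}{m}\right)m^{\frac{1}{1-m}}$ (monotonicity in the degree plus $\mathscr{C}^{2}_{d}\le\mathscr{C}^{1}_{d}$ from the proof of Theorem~3.1), you obtain the genuine worst-case threshold $\delta^{\ast}(m)=\Theta\left(m^{3-n}n^{m}\right)$, and the retained factor $n^{m}$ dominates every power of $m$, giving $\delta^{\ast}(m)=\omega(m^{3})$ for fixed $n\ge 2$. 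So your argument is the one that actually establishes the proposition; the only residual gap is the degenerate case $n=1$, where $\delta^{\ast}(m)=\Theta(m^{2})$ and the cubic claim fails, so the statement should be read with $n\ge 2$.
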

\begin{proof}
The assumption $c_{\power}=0,\pm 1$ implies $L(p) = \max_{\power\colon \abs{c_{\power}}=1}\,\frac{\prod_{j}\power_{j}!}{\abs{\power}!}$, which is lower bounded by $\frac{1}{m!}$. We have  $L^{\prime}(p) \le \erropt =  (1 - \frac{1}{m})m^{\frac{1}{(1-m)}}$ from Corollary~\ref{corr:L2}. Therefore, the LP and SDP relaxations of  \citep{de2010error} give better error  bounds than monomial convexification only if 
\[
\delta \ge \hat{\delta} := \frac{\binom{m+1}{3}n^{m}}{m!\erropt\binom{n+m}{n}} = \frac{m^{2}(m+1)}{6 m^{\frac{1}{1-m}} (1+\frac{m}{n})\cdots(1+\frac{1}{n})} = {\Omega}(m^{3}) \text{ for fixed $n$}. \qedhere
\]
\end{proof}
} 

\section{Multilinear monomial}\label{sec:polyerr2}
Here we consider a multilinear monomial $\f[m](x) = \multilin$ over either a box with constant ratio or a symmetric box. 
Since these boxes are simple scalings of $\Honer$ and $\Hnegoo$, respectively, and our error measure $\err{\cdot}$ scales as noted in Observation~\ref{obs:errscale}, we henceforth restrict our attention to only $\Honer$ and $\Hnegoo$. As in \textsection\ref{sec:err01}, the convex hull error is computed by bounding the convex and concave envelope errors separately. 

\subsection{Box with constant ratio} \label{sec:err1r}
\begin{proposition}[\citep{tawarmalani2013explicit,benson2004concave}]\label{prop:convoner}
\[ 
\concenv{\f[m]}{\Honer}(x) \;=\; \left[\min_{\sigma\in\Sigma_{n}}\,\sum_{j=1}^n r^{j-1}x_{\sigma(j)}\right] \,-\, \sum_{j=1}^{n-1} r^{j}, \] where $\Sigma_{n}$ is the set of all permutations of $\{1,\dots,n\}$, and 
\[
\cvxenv{\f[m]}{\Honer}(x) \;=\; \max_{i=1,\dots,n}\, r^{i-1}\left(\sum_{j=1}^n x_j \,-\, (n-i)\,-\,r(i-1)\right).
\]
\end{proposition}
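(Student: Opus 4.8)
The plan is to exploit the fact that a multilinear monomial is \emph{vertex-polyhedral} over any box: since $\f[m]$ is affine in each coordinate separately, both of its envelopes over $\Honer$ are polyhedral and are determined entirely by the $2^{n}$ vertex values $\f[m](v)=r^{k}$, where $k=\abs{\{j\mid v_{j}=r\}}$ for $v\in\{1,r\}^{n}$ (this is the same vertex-extendability invoked earlier for $\monom$ over $\Hzone$). Consequently, writing the concave hull as a lifting LP and dualizing gives
\[
\concenv{\f[m]}{\Honer}(x)=\min\Bigl\{a_{0}+a^{\top}x \;\Big|\; a_{0}+a^{\top}v\ge \f[m](v)\ \forall v\in\{1,r\}^{n}\Bigr\}=\max\Bigl\{\textstyle\sum_{v}\lambda_{v}\f[m](v)\mid \sum_{v}\lambda_{v}v=x,\ \onevec^{\top}\lambda=1,\ \lambda\ge\zerovec\Bigr\},
\]
and the analogous pair of LPs (with the inequalities reversed) computes $\cvxenv{\f[m]}{\Honer}$. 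The proof then reduces to exhibiting, for each candidate formula, a matching \emph{dual-feasible} family of affine functions together with a \emph{primal} convex combination of vertices that certifies the value.

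For the concave envelope I would first note that $g(x):=\min_{\sigma\in\Sigma_{n}}\sum_{j}r^{j-1}x_{\sigma(j)}-\sum_{j=1}^{n-1}r^{j}$ is concave, being a minimum of affine functions $\ell_{\sigma}$. A rearrangement computation shows that for every vertex $v$ with exactly $k$ coordinates equal to $r$, the minimizing permutation pairs the large coefficients $r^{n-1},\dots$ with the entries equal to $1$, yielding $\min_{\sigma}\ell_{\sigma}(v)=r^{k}=\f[m](v)$; hence every individual $\ell_{\sigma}$ overestimates $\f[m]$ at all vertices, is dual-feasible, and therefore $g\ge\concenv{\f[m]}{\Honer}$. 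For the reverse inequality I would, after relabelling so that $x_{1}\ge\cdots\ge x_{n}$ (for which the identity permutation attains the minimum), decompose $x$ along the chain of vertices $v^{i}$ having their first $i$ coordinates equal to $r$ and the rest equal to $1$, with weights $\lambda_{i}=T_{i}-T_{i+1}$ where $T_{i}=(x_{i}-1)/(r-1)$, $T_{0}=1$, $T_{n+1}=0$. The ordering of $x$ makes the $T_{i}$ nonincreasing in $[0,1]$, so the $\lambda_{i}$ are nonnegative and sum to $1$; an Abel-summation identity then gives $\sum_{i}\lambda_{i}\f[m](v^{i})=\sum_{i}\lambda_{i}r^{i}=\ell_{\mathrm{id}}(x)=g(x)$, certifying $\concenv{\f[m]}{\Honer}(x)\ge g(x)$ and hence equality.

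The convex envelope is handled by the mirror image of this argument: $\max_{i}r^{i-1}(\sum_{j}x_{j}-(n-i)-r(i-1))$ is convex as a maximum of affine pieces $a^{(i)}$, and a short evaluation shows $a^{(i)}(v)=r^{i-1}[(r-1)(k-i)+r]\le r^{k}=\f[m](v)$ at every vertex with $k$ coordinates equal to $r$, with equality exactly when $i=k$. Thus each $a^{(i)}$ is dual-feasible for the reversed lower-hull LP, giving that the maximum is $\le\cvxenv{\f[m]}{\Honer}$. The matching primal certificate again comes from a chain of vertices, now oriented so that the active piece $i$ coincides with the number of $r$-coordinates along the chain, which reduces to the same telescoping identity.

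I expect the main obstacle to be the combinatorial bookkeeping in the two reverse inequalities: one must check that a single sorting of the coordinates simultaneously selects the active affine piece (via the rearrangement inequality) and the supporting simplex of $n+1$ vertices, and that the weights defined from the $T_{i}$ reproduce exactly the affine value through the telescoping sum. The vertex-extendability step, although standard, must also be invoked with the correct orientation --- overestimators at the vertices for the concave side and underestimators at the vertices for the convex side --- so that ``valid at the $2^{n}$ vertices'' legitimately upgrades to ``valid on all of $\Honer$.''
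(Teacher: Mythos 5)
Your route is genuinely different from the paper's: the paper proves this proposition purely by specialization, substituting $\L_j=1,\U_j=r$ into Benson's closed form for the concave envelope over boxes with nonnegative lower bounds, and, for the convex side, applying the change of variables $x_j=1+(r-1)y_j$ so that a theorem of Tawarmalani et al.\ (on convex envelopes of functions that are convex-extendable from $\{0,1\}^n$ and agree there with a convex function of $\sum_j y_j$, here $\rho(s)=r^s$) delivers the $n$-piece formula. Your self-contained argument via vertex-extendability is a legitimate alternative, and your concave half is essentially complete: the rearrangement computation $\min_{\sigma}\ell_{\sigma}(v)=r^{k}$ at a vertex with $k$ coordinates equal to $r$ makes every $\ell_{\sigma}$ dual-feasible, and the nested chain $v^{0},\dots,v^{n}$ with weights $\lambda_i=T_i-T_{i+1}$, combined with Abel summation, certifies the reverse inequality.

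The gap is on the convex side, in the sentence claiming the primal certificate ``reduces to the same telescoping identity.'' It cannot: for sorted $x$, the requirement $\sum_i\lambda_i v^i=x$ along a nested chain forces exactly the weights $\lambda_i=T_i-T_{i+1}$ you already used, and the resulting value $\sum_i\lambda_i r^i$ equals $\ell_{\mathrm{id}}(x)$, i.e.\ the \emph{concave} envelope value --- a chain produces the maximizing vertex combination, not the minimizing one, so it proves nothing about $\cvxenv{\f[m]}{\Honer}$. Since $s\mapsto r^{s}$ is convex, the minimizing combination must instead be supported on vertices whose number of $r$-coordinates takes only the two adjacent values $k=\lfloor\tau\rfloor$ and $k+1$, where $\tau=(\sum_j x_j-n)/(r-1)$; such vertices form an antichain, and their existence is the step you must actually prove. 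One clean way: every vertex of $\{y\in[0,1]^n \mid k\le \onevec^{\top}y\le k+1\}$ is a $0/1$ point with exactly $k$ or $k+1$ ones (if the sum constraint is slack, all $n$ tight constraints are bounds and the sum is automatically an integer; if it is tight, $n-1$ coordinates are $0/1$ and the remaining one is an integer in $[0,1]$), so $y=(x-\onevec)/(r-1)$ is a convex combination of such points. The certified value is then the interpolation $(k+1-\tau)r^{k}+(\tau-k)r^{k+1}$, which equals $a^{(k+1)}(x)\le\max_i a^{(i)}(x)$, closing the argument. A separate small slip: at a vertex with $k$ coordinates equal to $r$ one has $a^{(i)}(v)=r^{k}$ for both $i=k$ and $i=k+1$ (when in range) --- adjacent pieces of a piecewise-linear envelope are simultaneously tight --- not ``exactly when $i=k$''; this is harmless, since dual feasibility only needs $a^{(i)}(v)\le\f[m](v)$.
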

\begin{proof}
To obtain $\concenv{\f[m]}{\Honer}$, we simply substitute $\L_{j}=1,\U_{j}=r$ in \citep[Theorem 1]{benson2004concave} which states $\concenv{\f[m]}{\H}$ for arbitrary $\L,\U$ with $\L\ge\zerovec$. The convex envelope can be derived from \citep[Theorem 4.6]{tawarmalani2013explicit}. This theorem gives a piecewise linear function with $n$ pieces as the convex envelope of a function $g(y)\colon\Hzone\mapsto\real$ when $g$ is convex-extendable from $\{0,1\}^{n}$ and there exists a convex function $\rho\colon\nonnegreal\mapsto\real$ such that $g(y) = \rho(\sum_{j=1}^{n}y_{j})$ for every $y\in\{0,1\}^{n}$. Consider $\multilin$. Writing $x_{j} = 1+(r-1)y_{j}$, the multilinear term becomes $g(y) = \prod_{j=1}^{n}(1+(r-1)y_j)$ for $y\in\Hzone$. Since this $g(y)$ is a multilinear function of $y$, it is convex-extendable from $\{0,1\}^{n}$. Furthermore, for $y\in\{0,1\}^{n}$, $g(y) =  r^{\sum_{j=1}^{n}y_j}$, and obviously $r^{(\cdot)}$ is convex over $\nonnegreal$. Therefore, the convex envelope formula follows from \citep[Theorem 4.6]{tawarmalani2013explicit}.
\end{proof}

Applying a straightforward scaling argument, similar to the one used for the $\Hzone$ box at the beginning of \textsection\ref{sec:err01}, gives us the convex hull of $\graph$ when $\L_{i}\U_{i}>0$ for all $i$ and for some $r>0$, $\U_{i}/\L_{i} = r$ for all $i$ with $\L_{i}>0$ and $\L_{i}/\U_{i}=r$ for all $i$ with $\U_{i}<0$. We omit the details.

Before proving Theorem~\ref{thm:converr1r} which claims that $\D$ and $\E$ are the maximum envelope errors for $\multilin$ over $\Honer$, we provide some background on these two constants. The value $\E$ is obtained by applying Lemma~\ref{lem:lowerbd}: set $t_{1}=1, t_{2}=r$ to get $\xi^{\prime} = (\frac{r^{n}-1}{n})^{\frac{1}{(n-1)}}(r-1)^{\frac{n}{(1-n)}} - \frac{1}{(r-1)}$ and $\phi(\xi^{\prime})$, upon simplification, becomes equal to $\E$. There is no simple explicit closed form formula for $\D$. 
However, $\D$ can be bounded as follows. After replacing $t=i/n$, the formula for $\D$ requires solving an integer program:\blue{
\[
\D = \max\{\psi(t) \mid t \in \{1/n,2/n,\dots,1 \} \}, \quad \text{ where } \psi(t) = (1 + (r-1)t)^{n} - r^{nt}.
\]
Note that $\psi$ is a difference of two convex increasing functions $\psi_{1}$ and $\psi_{2}$. After separating the maximizations over $\psi_{1}$ and $\psi_{2}$, we obtain the trivial upper bound $\D \le r^{n}-r$. But this bound can be very weak. A tighter bound can be derived by considering the continuous relaxation of the problem: \[
\D\le \max\{\psi(t)\mid t \in [0,1]\}.
\] 
Since $\psi$ is differentiable with $\psi(0) = \psi(1) = 0$, by Rolle's theorem, there exists at least one stationary point of $\psi$ in $[0,1]$.} Based on these stationary points, we can say the following.

\begin{proposition}\label{prop:D}
\blue{Let $t^{\ast} = \min\{t\in[0,1]\mid \psi^{\prime}(t)=0\}$ be the smallest stationary point of $\psi$ on $[0,1]$, and $t^{\ast\ast}$ be the global maxima of $\psi$ on $[0,1]$.} If $t^{\ast}\ge \frac{n-1}{n}$, then $\D = (1 + \frac{n-1}{n}(r-1))^{n} - r^{n-1}$, otherwise \blue{if $t^{\ast\ast} \le \frac{n-1}{n}$}, then $\D \le r^n \left(\frac{\ln r}{r-1}\right)^{\frac{n}{n-1}}-r^{n-1}$, otherwise \blue{$\D \le r^{\frac{n^{2}}{n-1}}\left(\frac{\ln r}{r-1}\right)^{\frac{n}{n-1}} \,-\, r^{n}$}.
\end{proposition}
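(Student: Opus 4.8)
The plan is to treat $\psi(t) = (1+(r-1)t)^n - r^{nt}$ as the continuous relaxation of the grid maximization defining $\D$, bound $\D$ by the value of $\psi$ at its global maximizer $t^{\ast\ast}$, and split into the three stated cases according to where $t^{\ast}$ and $t^{\ast\ast}$ sit relative to $\frac{n-1}{n}$. First I would record the elementary facts $\psi(0)=\psi(1)=0$ and $\psi'(0)=n(r-1-\ln r)>0$, the latter because $r-1>\ln r$ for $r>1$. Hence $\psi(t)>0$ for $t$ slightly above $0$, so the maximum of $\psi$ over the compact interval $[0,1]$ is positive and is attained at an interior point $t^{\ast\ast}\in(0,1)$ where $\psi'(t^{\ast\ast})=0$ and $\psi(t^{\ast\ast})>0$.

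For the first case, suppose $t^{\ast}\ge\frac{n-1}{n}$. Since $t^{\ast}$ is the smallest stationary point and $\psi'(0)>0$, continuity rules out any sign change of $\psi'$ before $t^{\ast}$, so $\psi'>0$ on $[0,t^{\ast})$ and $\psi$ is strictly increasing on $[0,\frac{n-1}{n}]$. Therefore, among the grid points $\frac1n,\dots,\frac{n-1}{n}$ the largest value of $\psi$ occurs at $\frac{n-1}{n}$, and this value $\psi(\frac{n-1}{n})=(1+\frac{n-1}{n}(r-1))^n-r^{n-1}$ exceeds $\psi(0)=0=\psi(1)$. Hence $\D=\psi(\frac{n-1}{n})$, as claimed.

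The crux is the remaining two cases, where I would use $\D\le\psi(t^{\ast\ast})$ and then control $\psi(t^{\ast\ast})$ through the interior stationarity condition $(r-1)(1+(r-1)t^{\ast\ast})^{n-1}=\ln r\cdot r^{nt^{\ast\ast}}$. Setting $b:=r^{nt^{\ast\ast}}$ and solving this relation gives $(1+(r-1)t^{\ast\ast})^{n-1}=\frac{\ln r}{r-1}b$, and raising to the power $\frac{n}{n-1}$ yields $(1+(r-1)t^{\ast\ast})^{n}=\left(\frac{\ln r}{r-1}\,b\right)^{n/(n-1)}$. Consequently $\psi(t^{\ast\ast})=F(b)$, where $F(b):=\left(\frac{\ln r}{r-1}\right)^{n/(n-1)}b^{n/(n-1)}-b$. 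The key observation is that $F$ is convex and satisfies $F(b)>0\Rightarrow F'(b)>0$: since $F(b)>0$ is equivalent to $\left(\frac{\ln r}{r-1}\right)^{n/(n-1)}b^{1/(n-1)}>1$, we get $F'(b)=\frac{n}{n-1}\left(\frac{\ln r}{r-1}\right)^{n/(n-1)}b^{1/(n-1)}-1>\frac{n}{n-1}-1>0$. As $\psi(t^{\ast\ast})=F(b)>0$ and $F'$ is increasing, $F$ is increasing on $[b,\infty)$. In case $2$, $t^{\ast\ast}\le\frac{n-1}{n}$ forces $b\le r^{n-1}$, so $\psi(t^{\ast\ast})=F(b)\le F(r^{n-1})=r^{n}\left(\frac{\ln r}{r-1}\right)^{n/(n-1)}-r^{n-1}$, using $(r^{n-1})^{n/(n-1)}=r^{n}$; in case $3$, $t^{\ast\ast}\le 1$ forces $b\le r^{n}$, so $\psi(t^{\ast\ast})\le F(r^{n})=r^{n^2/(n-1)}\left(\frac{\ln r}{r-1}\right)^{n/(n-1)}-r^{n}$, using $(r^{n})^{n/(n-1)}=r^{n^2/(n-1)}$. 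These are exactly the two stated bounds.

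The main obstacle is the middle step: recognizing that the stationarity condition collapses $\psi(t^{\ast\ast})$ into a single convex function $F$ of $b=r^{nt^{\ast\ast}}$, after which the bounds in cases $2$ and $3$ emerge simply as $F$ evaluated at the endpoints $r^{n-1}$ and $r^{n}$ of the feasible range of $b$. I would also remark that one can in fact show $\psi$ is unimodal: the map $t\mapsto\ln\psi_1'(t)-\ln\psi_2'(t)$ is strictly concave (its second derivative is $-(n-1)(r-1)^2/(1+(r-1)t)^2<0$), is positive at $0$ and negative at $1$, so its superlevel set $\{\psi'\ge 0\}$ is an interval and $\psi'$ changes sign exactly once. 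This gives $t^{\ast}=t^{\ast\ast}$ and renders the third case vacuous, but the case analysis above is valid verbatim without invoking it.
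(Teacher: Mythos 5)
Your proof is correct, and its core follows essentially the same route as the paper's. Case 1 is handled by noting that $\psi'$ cannot change sign before $t^{\ast}$, so $\psi$ is increasing on $[0,\tfrac{n-1}{n}]$ and the grid maximum sits at $i=n-1$ (the paper reaches the same monotonicity via the positivity claim $\psi>0$ on $(0,1)$ combined with the definition of $t^{\ast}$). Cases 2 and 3 use exactly the paper's device: the stationarity identity $(1+(r-1)t^{\ast\ast})^{n-1}=\frac{\ln r}{r-1}\,r^{nt^{\ast\ast}}$ rewrites $\psi(t^{\ast\ast})$ as a function of $b=r^{nt^{\ast\ast}}$, after which one pushes $b$ up to $r^{n-1}$ (resp.\ $r^{n}$). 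Where the paper factors this expression as $r^{nt^{\ast\ast}}\bigl(r^{\frac{nt^{\ast\ast}}{n-1}}(\frac{\ln r}{r-1})^{\frac{n}{n-1}}-1\bigr)$ and enlarges each factor separately using positivity of the bracket, you package the same estimate as monotonicity of the convex function $F$ on $[b,\infty)$, justified by $F(b)>0\Rightarrow F'(b)>0$; these are the same inequality in different clothing, though your version is arguably cleaner to verify. The genuinely new content is your closing remark: since $\ln\psi_{1}'-\ln\psi_{2}'$ is strictly concave, positive at $0$ and negative at $1$, the derivative $\psi'$ changes sign exactly once on $[0,1]$, hence $t^{\ast}=t^{\ast\ast}$. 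This proves the conjecture the paper states immediately after the proposition and renders the third case vacuous, strictly improving the published result. One small gap there: negativity at $t=1$ requires $r\ln r>r-1$ for $r>1$, which you should record (it follows since $r\ln r-r+1$ vanishes at $r=1$ and has derivative $\ln r>0$).
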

\blue{The proof is in Appendix~\ref{sec:app1}. Obviously, $t^{\ast} \le t^{\ast\ast}$. We conjecture that $t^{\ast} = t^{\ast\ast}$.} 

We now prove our main result in this section.

\blue{\subsubsection{Proof of Theorem~\ref{thm:converr1r}}}
\begin{proof}
We only prove the maximum errors for the envelopes, the formula for $\err{\conv{\graph[\Honer]}}$ follows subsequently from Observation~\ref{obs:errmaxenv}. Consider the concave envelope first. We noted earlier that the value $\E$ comes from applying Lemma~\ref{lem:lowerbd} with $t_{1}=1, t_{2}=r$. Hence to prove that the maximum concave envelope error is equal to $\E$, it suffices to argue that there exists a point in $\relint \{\onevec,r\onevec \}$ which maximizes this error. Suppose, for sake of contradiction, that this is not the case. Since $\concenv{\f[m]}{\Honer}(\onevec) = \f[m](\onevec)$ and $\concenv{\f[m]}{\Honer}(r\onevec) = \f[m](r\onevec)$, we know that these two points do not maximize the error. Then our assumption means that for every maximizer $x^{\ast}$ there exists some index $i$ such that $x^{\ast}_{(i)} < x^{\ast}_{(i+1)}$, where $(\cdot)$ is the permutation \blue{that permutes variables as} $x^{\ast}_{(1)} \le x^{\ast}_{(2)} \le \cdots \le x^{\ast}_{(n)}$. Since $r > 1$, for every $x\in\Honer$, the minimum over $\Sigma_{n}$ in the expression for $\concenv{\f[m]}{\Honer}$, \blue{which is given in Proposition~\ref{prop:convoner}}, occurs at a permutation $\sigma$ such that $x_{\sigma(1)}\ge  x_{\sigma(2)}\ge\cdots \ge x_{\sigma(n)}$. Therefore, $\concenv{\f[m]}{\Honer}(x) = \sum_{j=1}^{n}r^{n-j}x_{(j)} - \sum_{j=1}^{n-1} r^{j}$. In particular, $\concenv{\f[m]}{\Honer}(x^{\ast}) = \sum_{j=1}^{n}r^{n-j}x^{\ast}_{(j)} - \sum_{j=1}^{n-1} r^{j}$, and the maximum error is $z^{\ast} = \sum_{j=1}^{n}r^{n-j}x^{\ast}_{(j)} - \prod_{j=1}^{n}x^{\ast}_{j} - \sum_{j=1}^{n-1} r^{j}$. Now consider two points $\hat{x}$ and $\tilde{x}$ obtained from $x^{\ast}$ by setting, respectively, $\hat{x}_{(i)} = x^{\ast}_{(i+1)}$ and $\tilde{x}_{(i+1)} = x^{\ast}_{(i)}$. Since the error at these points cannot be larger than $z^{\ast}$, we have $r^{n-i}(x^{\ast}_{(i+1)} - x^{\ast}_{(i)}) \le (x^{\ast}_{(i+1)} - x^{\ast}_{(i)})\prod_{j\neq i}x_{(j)}$ and $r^{(n-i-1)}(x^{\ast}_{(i)} - x^{\ast}_{(i+1)}) \le (x^{\ast}_{(i)} - x^{\ast}_{(i+1)})\prod_{j\neq i+1}x_{(j)}$, and consequently, $r^{n-i}-\prod_{j\neq \blue{i}} x^{\ast}_{(j)}\le 0$ and $r^{n-i-1}-\prod_{j\neq i+1} x^{\ast}_{(j)}\ge 0$. Hence
\[
r^{n-i}\;\le\; r^{n-i}x^{\ast}_{(i)} \;\le\; \prod_{j=1}^{n}x^{\ast}_{j} \;\le\; r^{n-i-1}x^{\ast}_{(i+1)}\;\le\; r^{n-i}.
\]
Equality holds in above if and only if $x^{\ast}_{(i)}=1$ and $x^{\ast}_{(i+1)}=r$. Therefore $x^{\ast}_{(1)}=\dots=x^{\ast}_{(i)}=1$, $x^{\ast}_{(i+1)}=\dots=x^{\ast}_{(n)}=r$, but at such a point, the error is zero due to \[\concenv{\f[m]}{\Honer}(x^{\ast}) \;=\; \sum_{j=1}^{i}r^{n-j} + \sum_{j=i+1}^{n}r^{n+1-j} - \sum_{j=1}^{n-1}r^{j} \;=\; r^{n-i}\; =\; \f[m](x^{\ast}).\] Thus we have reached a contradiction to $x^{\ast}$ being a maximizer. Hence it must be that the error is maximized on $\relint\{\onevec,r\onevec \}$.

\renewcommand{\phi}{\widehat{\varphi}}
Now consider the convex envelope. We follow similar steps as in the proof of Theorem~\ref{thm:errenv01}.
\[ 
\begin{split}
\multilin \,-\, \cvxenv{\f[m]}{\Honer}(x) &\;=\; \multilin \,-\, \max_{i=1,\dots,n} r^{i-1}\left(\sum_{j=1}^n x_j - (n-i)-r(i-1)\right)\\
&\;=\; \min_{i=1,\dots,n} \left\{ \multilin - r^{i-1}\left(\sum_{j=1}^n x_j - (n-i)-r(i-1)\right) \right\} \\
&\;\le\; \min_{i=1,\dots,n} \left\{ \multilin - r^{i-1}\left(n\sqrt[\leftroot{-1}\uproot{10}n]{\multilin} - (n-i)-r(i-1)\right) \right\},
\end{split}
\] where we employ the arithmetic-geometric means inequality. By regarding $\sqrt[n]{\multilin}$ as a scalar variable $t$, we get 
\[
\max_{x\in\Honer}\multilin \,-\, \cvxenv{\f[m]}{\Honer}(x) \;\le\; \max_{t\in[1,r]}\min_{i=1,\dots,n} \phi_{i}(t),
\] where $\phi_{i}\colon t \mapsto t^{n} - nr^{i-1}t + r^{i-1}[n-r + i(r-1)]$ is a convex function on $[1,r]$. Therefore we have to find the maximum value of the pointwise minimum function $\min_{i}\phi_{i}(t)$ on the interval $[1,r]$ and it is apparent that this maximum value is attained at a breakpoint of the function, i.e., at a $t^{\ast}$ such that $\phi_{i}(t^{\ast})=\phi_{i+1}(t^{\ast})$ for some $1\le i\le n-1$. For any $1\le i\le n-1$, solving for $t$ in $\phi_{i}(t) = \phi_{i+1}(t)$ means that we must find $t$ satisfying $t^{n} - r^{i-1}nt + r^{i-1}[n-r + i(r-1)] = t^{n} - r^{i}nt + r^{i}[n - i - 1 + ir]$, which upon canceling and rearranging terms leads to $r^{i-1}(r-1)nt = - r^{i-1}(n-i) - 2ir^{i} + nr^{i} + ir^{i+1}$. Therefore $(r-1)nt = -(n-i) - 2ir + nr + ir^{2} = n(r-1) +i(r-1)^{2}$ and hence, $t = 1 + i(r-1)/n$. Substituting this breakpoint $t$ into $\phi_{i}$ yields \[\phi_{i}(t) = \left(1 + \frac{i}{n}(r-1) \right)^{n} - nr^{i-1}\left(1 + \frac{i}{n}(r-1) \right) + r^{i-1}[n-r + i(r-1)] = \left(1 + \frac{i}{n}(r-1) \right)^{n} - r^{i}. \] The maximum, with respect to $i=1,\dots,n-1$, over all such values is the maximum of $\min_{i}\phi_{i}(t)$ and notice that this maximum over $i$ is exactly the constant $\D$. Hence $\D$ is a upper bound on the convex envelope error. This bound is tight because the means inequality is an equality when all the $x_{j}$'s are equal to each other, and hence this error is attained on $\relint{\{\onevec,r\onevec\}}$.
\end{proof}

\subsubsection{Comparing $\D$ and $\E$}
We conjecture that $\D\le \E$ for every $r,n$, which would imply that the convex hull error is equal to $\E$. Although we were unable to prove this in general due to the extremely complicated forms for $\E$, and more specifically, for $\D$, we ran some simulations, graphed in Figure~\ref{fig:compare}, to support our claim. For every $n=2,\dots,100$ and $r\in\{1.01,1.2,1.5,2,3,5,10\}$, we computed the ratio $\D/\E$ and plotted it in Figure~\ref{fig:ConvConc}. We also plotted the ratio between the error of the relaxed convex envelope (the relaxation is obtained by taking the maximum over $i\in\{1,n\}$ in the expression for $\cvxenv{\f[m]}{\Honer}$) and $\E$, see Figure~\ref{fig:RConvConc}. As can be seen in these figures, the ratios are never larger than 1, thereby establishing a strong empirical basis in support of our conjecture that the error from the concave envelope dominates that from the convex envelope, and possibly even from the relaxed convex envelope.

\begin{figure}[h]
\centering
\subfigure[Concave and Convex envelopes]{
\includegraphics[scale=0.52]{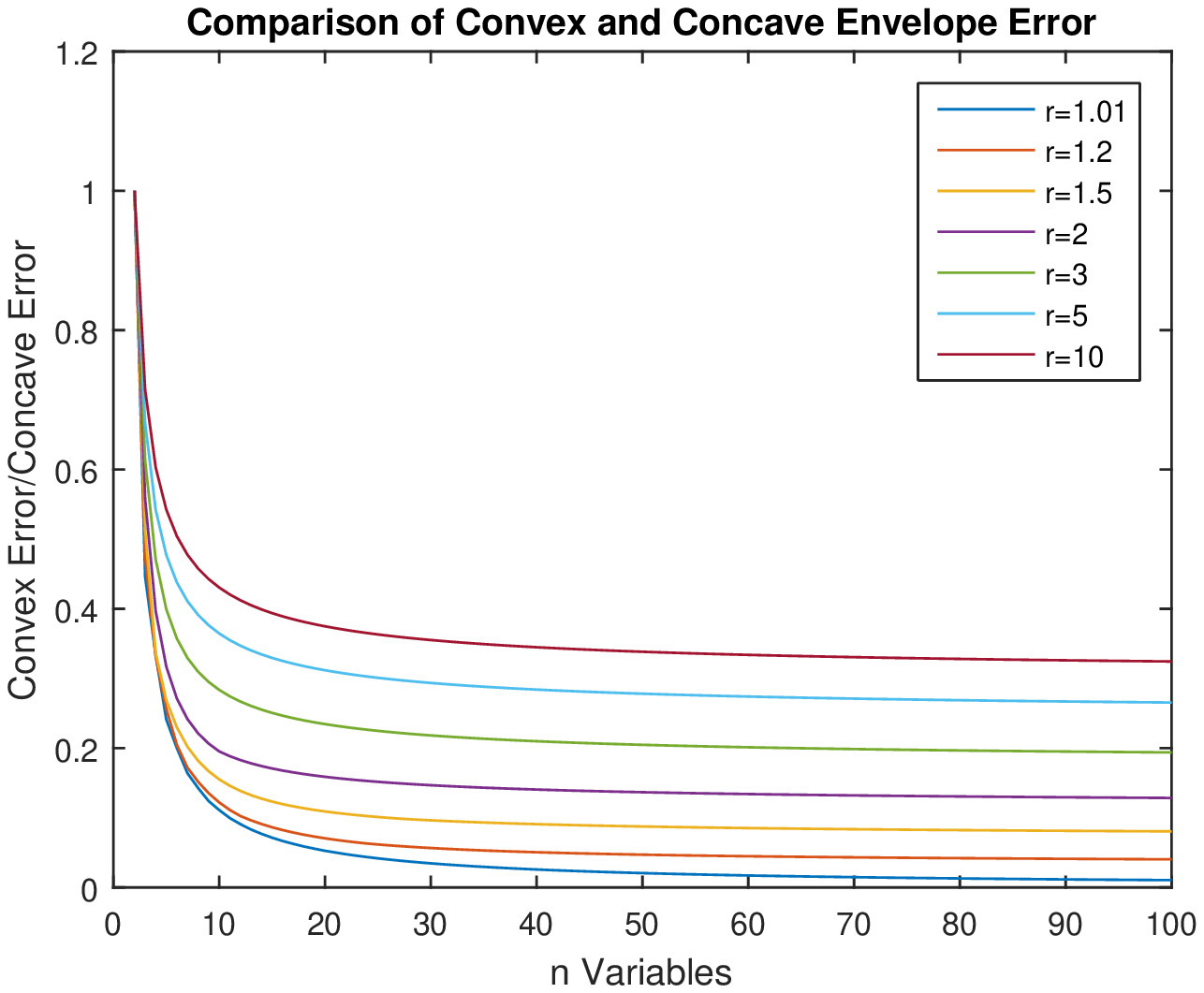}
\label{fig:ConvConc}
}
\subfigure[Concave and Relaxed Convex envelopes]{
\includegraphics[scale=0.52]{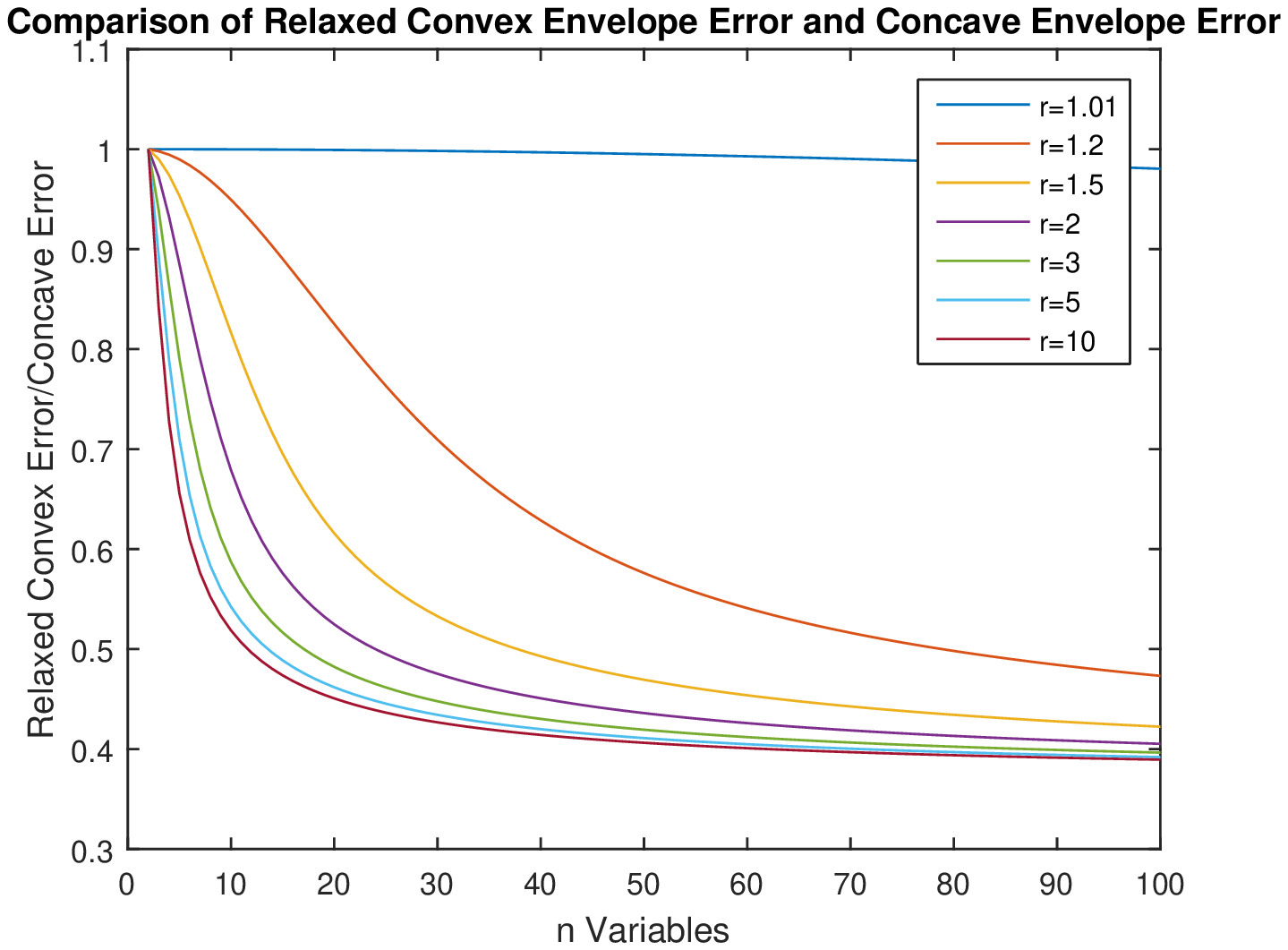}
\label{fig:RConvConc}
}
\caption{Error comparisons for $\multilin$ over $\Honer$.}
\label{fig:compare}
\end{figure}

Asymptotically, $\E$ dominates $\D$ in the following sense. \blue{Recall $t^{\ast}$ and $t^{\ast\ast}$  defined in Proposition~\ref{prop:D}}.

\begin{proposition}
$\lim_{n\to\infty} \frac{\E}{r^{n}-1} = 1$, and $\lim_{n\to\infty} \frac{\D}{r^{n}-1} \le \frac{1}{e}$ \blue{if $t^{\ast}\ge (n-1)/n$ or $t^{\ast\ast}\le (n-1)/n$}.
\end{proposition}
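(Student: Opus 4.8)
The plan is to treat the two limits separately: the statement about $\E$ follows from a direct asymptotic expansion, while the statement about $\D$ is handled by splitting into the two cases supplied by Proposition~\ref{prop:D}.

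For the first limit I would divide $\E$ by $r^{n}-1$ to obtain
\[
\frac{\E}{r^{n}-1} = \frac{1}{r^{n}-1} + \frac{1}{r-1}\left[\frac{n-1}{n}\left(\frac{r^{n}-1}{n(r-1)}\right)^{\frac{1}{n-1}} - 1\right].
\]
The first summand vanishes as $n\to\infty$ since $r>1$. For the bracket, the crux is to show $\left(\frac{r^{n}-1}{n(r-1)}\right)^{1/(n-1)}\to r$, which I would prove by taking logarithms: writing $\frac{1}{n-1}\bigl[\ln(r^{n}-1) - \ln n - \ln(r-1)\bigr]$ and using $\ln(r^{n}-1)=n\ln r + \ln(1-r^{-n})$, the dominant term is $\frac{n\ln r}{n-1}\to\ln r$, whereas $\frac{\ln n}{n-1}\to 0$ and $\frac{\ln(r-1)}{n-1}\to 0$. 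Combined with $\frac{n-1}{n}\to 1$, the bracket tends to $r-1$, so $\frac{\E}{r^{n}-1}\to \frac{r-1}{r-1}=1$.

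For the second limit I would invoke Proposition~\ref{prop:D}. In the case $t^{\ast}\ge (n-1)/n$ we have the exact value $\D = (1+\frac{n-1}{n}(r-1))^{n} - r^{n-1}$, and the key algebraic step is the factorization $1+\frac{n-1}{n}(r-1) = r\bigl(1-\frac{r-1}{nr}\bigr)$, whence $\frac{\D}{r^{n}-1} = \frac{(1-\frac{r-1}{nr})^{n}-1/r}{1-r^{-n}}\to e^{-(r-1)/r}-\frac1r$ by the standard limit $(1+a/n)^{n}\to e^{a}$. Setting $s=1-\frac1r\in(0,1)$, this limit equals $h(s):=e^{-s}-1+s$; since $h'(s)=1-e^{-s}>0$ on $(0,1)$ and $h(1)=1/e$, I conclude $h(s)<1/e$, giving the bound. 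In the case $t^{\ast\ast}\le (n-1)/n$ I would instead use the upper estimate $\D\le r^{n}\bigl(\frac{\ln r}{r-1}\bigr)^{n/(n-1)}-r^{n-1}$, divide by $r^{n}-1$, and pass to the limit (using $\frac{n}{n-1}\to 1$ and $0<\frac{\ln r}{r-1}<1$) to get $\frac{\ln r}{r-1}-\frac1r$; I would then verify by elementary single-variable calculus that $\frac{\ln r}{r-1}-\frac1r\le 1/e$ for all $r>1$ — in fact the maximum over $r$ is strictly below $1/e$, so this case is never binding.

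The main obstacle I anticipate is not the limit computations, which become routine once the factorizations are in place, but the sharpness bookkeeping in the first case: the constant $1/e$ is attained only asymptotically as $r\to\infty$ (equivalently $s\to1$), so the monotonicity argument for $h(s)$ must be phrased to yield $\le 1/e$ for every fixed $r$. A secondary technical point is justifying the interchange of the limit with the $n$-th power in $(1-\frac{r-1}{nr})^{n}\to e^{-(r-1)/r}$ and with the $(n-1)$-th root in the $\E$ computation; both follow from continuity of $\log$ together with $(1+a/n)^{n}\to e^{a}$, but should be stated explicitly.
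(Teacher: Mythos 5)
Your proposal is correct and follows essentially the same route as the paper: the $\E$ limit is computed by the identical asymptotic factorization, and the $\D$ bound is handled via the same two cases of Proposition~\ref{prop:D}, yielding the limits $e^{1/r-1}-\tfrac1r$ (your $h(s)=e^{-s}-1+s$ with $s=1-\tfrac1r$ is just a reparametrization of the paper's monotonicity-in-$r$ argument) and $\tfrac{\ln r}{r-1}-\tfrac1r$, respectively. The only point where both you and the paper are equally informal is the final single-variable bound $\tfrac{\ln r}{r-1}-\tfrac1r\le\tfrac1e$, which the paper simply asserts numerically (``never exceeds $0.22$''), so no gap relative to the paper's own standard.
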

\begin{proof}
We have \[ \lim_{n\to\infty}\frac{\E}{r^{n}-1} \;=\; \frac{1}{r-1}\left[\lim_{n\to\infty} \frac{n-1}{n}\,\lim_{n\to\infty} {\left(\frac{r^n-1}{n(r-1)}\right)}^{\frac{1}{n-1}}\,-\,1\right] \;=\; \frac{1}{r-1}\left[1\cdot r - 1 \right] = 1.\] Proposition~\ref{prop:D} gives two bounds on $\D$. If $\D = (1 + \frac{n-1}{n}(r-1))^{n} - r^{n-1}$, then 
\[
\begin{split}
\lim_{n\to\infty}\frac{\D}{r^{n}-1} \;=\; \lim_{n\to\infty}\frac{\D}{r^{n}}\,\lim_{n\to\infty}\frac{r^{n}}{r^{n}-1} &\;=\; \lim_{n\to\infty}\left(\frac{1}{r} + \left(1-\frac{1}{n}\right)\left(1-\frac{1}{r}\right) \right)^{n} - \frac{1}{r} \\
&\;=\; \lim_{n\to\infty}\left(1 -\frac{1}{n}+\frac{1}{nr}  \right)^{n} - \frac{1}{r}\\
&\;=\;  e^{\frac{1}{r}-1}-\frac{1}{r} .
\end{split}\] The above function of $r$ is increasing over $[1,\infty)$ and converges to $1/e \approx 0.37$ as $r\to\infty$. The limit on the other value of $\D$ is $\frac{\ln r}{r-1}-\frac{1}{r}$ as $n\to\infty$, and the value of this function of $r$ never exceeds 0.22.
\end{proof}

Thus $\E$ seems to grow much more rapidly than $\D$ \blue{in some cases}.

%

\subsection{Symmetric box}\label{sec:err11}

\newcommand{\evenset}{\mathcal{N}^{even}}
\newcommand{\oddset}{\mathcal{N}^{odd}}

\subsubsection{Convex hull}
\citet{mahdi2010coloring} showed that the recursive McCormick relaxation, which \citet{ryoo2001analysis} had used to obtain an extended formulation of $\conv{\graph[\Hzone]}$, yields a compact extended formulation of $\conv{\graph[\Hnegoo]}$. However, to the best of our knowledge, there is no known characterization of this convex hull in the $(\bx,w)$-space. We provide this next. A different proof based on constructive arguments is presented in a companion paper \citep{symmetricpoly}.

\begin{theorem}\label{thm:convminusplus}
Partition subsets of $\{1,\dots,n\}$ into $\evenset := \{I\subseteq \{1,\dots,n\} \mid \abs{I}\text{ is even}  \}$ and $\oddset := \{I\subseteq \{1,\dots,n\} \mid \abs{I}\text{ is odd}  \}$. If $n$ is odd, then 
\[
\conv{\graph[\Hnegoo]} = \Big\{(x,w)\in \Hnegoo[n+1] \mid \; -(n-1) \,\le\, \sum_{i\in I} x_i \,-\, \sum_{i\notin  I} x_i \,+\, w \,\le\,  n-1, \ \,  I\in\evenset\Big\}.
\]
If $n$ is even, then 
\begin{equation*}
\begin{split}
\conv{\graph[\Hnegoo]} = \Big\{(x,w)\in\Hnegoo[n+1] \mid &\; \sum_{i\in I} x_i - \sum_{i\notin  I} x_i +  w\leq  n-1, \ \,  I\in\oddset\\
&\; \sum_{i\in I} x_i - \sum_{i\notin  I} x_i + w\geq -(n-1), \ \,  I \in\evenset\Big\}.
\end{split}
\end{equation*}
\end{theorem}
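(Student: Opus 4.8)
The plan is to prove the identity in three stages: reduce the convex hull of the graph to the convex hull of finitely many points, verify that the stated inequalities are valid (one inclusion), and then verify that they suffice (the reverse inclusion), which is where the real work lies.

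\emph{Reduction to vertices.} Since $\f[m](x)=\multilin$ is multilinear, every $x\in\Hnegoo$ admits the product representation $x=\sum_{v\in\{-1,1\}^{n}}\lambda_{v}(x)\,v$ with weights $\lambda_{v}(x)=\prod_{j=1}^{n}\tfrac{1+v_{j}x_{j}}{2}\ge 0$ summing to $1$, under which $\multilin=\sum_{v}\lambda_{v}(x)\prod_{j}v_{j}$ because the factors are independent. Hence $(x,\multilin)$ is a convex combination of the points $(v,\prod_{j}v_{j})$, giving $\conv{\graph[\Hnegoo]}=\conv{\{(v,\prod_{j}v_{j})\colon v\in\{-1,1\}^{n}\}}=:Q$. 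I will prove $Q=\mathcal P$, where $\mathcal P$ denotes the polyhedron on the right-hand side. A useful tool throughout is the reflection group $g_{\epsilon}\colon(x,w)\mapsto(\epsilon\odot x,(\prod_{j}\epsilon_{j})\,w)$ for $\epsilon\in\{-1,1\}^{n}$: each $g_{\epsilon}$ permutes the generators of $Q$ and, by direct substitution, permutes the defining inequalities of $\mathcal P$ among themselves, sending an upper inequality to an upper or lower inequality according to the sign of $\prod_{j}\epsilon_{j}$ while preserving the parity class of the index set. Thus both $Q$ and $\mathcal P$ are $g_{\epsilon}$-invariant, which lets me fix a single representative sign pattern below.

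\emph{Validity, $Q\subseteq\mathcal P$.} Encode each inequality by the sign vector $s\in\{-1,1\}^{n}$ with $s_{i}=+1$ for $i\in I$ and $s_{i}=-1$ for $i\notin I$, so the linear form is $L_{s}(x,w)=\sum_{i}s_{i}x_{i}+w$ and $\prod_{j}s_{j}=(-1)^{n-\abs{I}}$. For a generator $(v,\prod_{j}v_{j})$, set $t=\abs{\{i\colon v_{i}\ne s_{i}\}}$; a short computation gives $\sum_{i}s_{i}v_{i}=n-2t$ and $\prod_{j}v_{j}=(\prod_{j}s_{j})(-1)^{t}$, whence $L_{s}(v,\prod_{j}v_{j})=n-2t+(\prod_{j}s_{j})(-1)^{t}$. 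When $\prod_{j}s_{j}=-1$ this equals $n-2t-(-1)^{t}\le n-1$ for every $t$ (equality at $t=0$), and when $\prod_{j}s_{j}=(-1)^{n}$ the symmetric computation yields $\ge -(n-1)$ (equality at $t=n$). Matching $\prod_{j}s_{j}=(-1)^{n-\abs{I}}$ to these two parity requirements reproduces exactly the index classes $\oddset$/$\evenset$ stated for $n$ even, and the single class $\evenset$ carrying both bounds for $n$ odd; this proves $Q\subseteq\mathcal P$.

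\emph{Sufficiency, $\mathcal P\subseteq Q$.} Since $Q\subseteq\mathcal P$ and both are polytopes, it suffices to show every vertex $(\bar x,\bar w)$ of $\mathcal P$ lies in $Q$, i.e.\ that $\bar x\in\{-1,1\}^{n}$ and $\bar w=\prod_{j}\bar x_{j}$. The second half is easy once the first is known: for $\bar x\in\{-1,1\}^{n}$ the binding upper (resp.\ lower) parity inequality comes from the admissible $s$ maximizing (resp.\ minimizing) $\sum_{i}s_{i}\bar x_{i}$, and the two resulting bounds on $w$ coincide at the single value $\prod_{j}\bar x_{j}$ (the $t=0$ and $t=n$ cases of Stage~2), so $(\bar x,\bar w)$ is a generator. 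The crux is to rule out a vertex with a strictly interior coordinate $\bar x_{i}\in(-1,1)$. Here I would use that a parity inequality is tight at $(\bar x,\bar w)$ only for sign vectors extremizing $\sum_{i}s_{i}\bar x_{i}$ within the relevant parity class; away from coordinate ties and zeros this extremizer is essentially unique, so too few parity inequalities are tight to pin the free coordinate together with $w$, contradicting vertexhood. For instance, two tight upper inequalities must have $\prod_{j}s_{j}$ equal, hence differ in an even number of coordinates, and subtracting forces a single fractional coordinate to an integer in $[-1,1]$. The remaining configurations, in which several coordinates share an absolute value or vanish, I would collapse to lower dimension via the reflection symmetry and a perturbation argument that exhibits a nontrivial segment of $\mathcal P$ through $(\bar x,\bar w)$.

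The main obstacle is precisely this last step: bounding which and how many parity inequalities can be simultaneously tight at a candidate vertex with fractional coordinates, with the $\bar x_{i}=0$ and coordinate-tie cases being delicate. An alternative route to the same inclusion is to verify the envelope identities
\[
\concenv{\f[m]}{\Hnegoo}(x)=\min\Bigl\{\,n-1-\textstyle\sum_{i}s_{i}x_{i}\colon\prod_{j}s_{j}=-1\,\Bigr\},
\]
together with the analogous formula for $\cvxenv{\f[m]}{\Hnegoo}$, after which the reasoning of Observation~\ref{obs:errmaxenv} identifies $\mathcal P$ with the region between the two envelopes. Proving these identities, however, again reduces to producing, for each $x$, an explicit convex combination of generators realizing the claimed value—the content handled constructively in the companion paper.
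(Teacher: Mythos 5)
Your Stages 1 and 2 are correct: the product-weight decomposition $\lambda_{v}(x)=\prod_{j}\tfrac{1+v_{j}x_{j}}{2}$ does show $\conv{\graph[\Hnegoo]}=\conv{\{(v,\textstyle\prod_{j}v_{j})\colon v\in\{-1,1\}^{n}\}}$, and your parity computation $L_{s}(v,\prod_{j}v_{j})=n-2t+(\prod_{j}s_{j})(-1)^{t}$ correctly establishes validity and recovers the matching of parity classes to upper/lower bounds (this is exactly the paper's ``no-good cut'' derivation, reached by a different route). The genuine gap is Stage 3, which is the entire content of the theorem, and you acknowledge it yourself. The tight-constraint counting you sketch does not go through as stated: a vertex of $\mathcal P\subset\real^{n+1}$ requires $n+1$ linearly independent active constraints, and subtracting two tight upper inequalities with sign vectors $s,s'$ only yields $\sum_{i\in D}\pm\bar x_{i}=0$ over the even-cardinality disagreement set $D$, which forces an integer value only when all but one coordinate indexed by $D$ is already $\pm1$. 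The degenerate configurations you defer --- several coordinates sharing an absolute value, zero coordinates, box constraints active alongside parity constraints --- are precisely where the maximizer of $\sum_{i}s_{i}\bar x_{i}$ within a parity class fails to be unique and where such an argument must do its real work; ``collapse to lower dimension via reflection symmetry and a perturbation argument'' is a plan, not a proof. Your alternative route via the envelope formulas is likewise deferred to a constructive argument you do not supply.

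For contrast, the paper closes this direction with an optimization argument that avoids classifying active sets altogether. Writing $x_{n+1}=w$ and letting $P^{-1,1}$ denote the box $\Hnegoo[n+1]$ intersected with the no-good inequalities $\sum_{i\in I}x_{i}-\sum_{i\notin I}x_{i}\ge-(n-1)$ for odd $I\subseteq\{1,\dots,n+1\}$, it shows that for \emph{every} cost vector $c\in\real^{n+1}$ the linear program $\max\{c^{\top}x\colon x\in P^{-1,1}\}$ has an optimal solution in $\{-1,1\}^{n+1}$ lying on the graph. The case analysis is on $B=\{i\colon c_{i}<0\}$ and $A=\{i\colon c_{i}=0\}$: if $\abs{B}$ is even, setting $x_{i}=-1$ exactly on $B$ attains the trivial bound $\sum_{i}\abs{c_{i}}$; if $\abs{B}$ is odd and $A\neq\emptyset$, flip one zero-cost coordinate as well; if $\abs{B}$ is odd and $A=\emptyset$, sacrifice the coordinate $j_{1}$ with smallest $\abs{c_{j_{1}}}$, and certify optimality of the resulting $\pm1$ point by combining the no-good inequality for $I=B$ with the box bounds. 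Since every vertex of $P^{-1,1}$ is the unique maximizer of some $c$, all vertices are graph points, and the theorem follows. If you want to complete your proof, this LP argument is the missing piece; otherwise you must carry out the active-set analysis in full, including the tie and zero-coordinate cases.
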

Before presenting our proof, we provide an intuition behind the proposed convex hull description. Denote $x_{n+1}=w$ to get $\graph[\Hnegoo] = \{x\in\Hnegoo[n+1]\mid x_{n+1}=\multilin \}$. It is well-known \citep{sherali1997convex,rikun1997convex} that for any box $\H$, the extreme points of $\conv{\graph[\Hnegoo]}$ are in bijection with the extreme points of $\H$ (this is also true for a multilinear polynomial). Hence the set of extreme points of $\conv{\graph[\Hnegoo]}$ is equal to $\graph[\Hnegoo] \cap \{-1,1\}^{n+1}$. A point in $\{-1,1\}^{n+1}$ violates $x_{n+1}=\multilin$ if and only if the set $\{i\in\{1,\dots,n+1\}\mid x_{i}=-1 \}$ has odd cardinality. Every such inadmissible point in $\{-1,1\}^{n+1}$ can be cut off using the ``no-good'' inequality \[
\sum_{i\in I}(x_{i}-(-1)) \:+\: \sum_{i\in\{1,\dots,n+1\}\setminus I}(1-x_{i}) \;\ge\; 2
\] 
for some odd subset $I\subseteq\{1,\dots,n+1\}$. The no-good cut for subset $I$ is valid to every point in $\{-1,1\}^{n+1}$, except that point which takes the value $-1$ at exactly those elements indexed by $I$. 
This cut rearranges to 
\begin{equation}\label{eq:nogood}
\sum_{i\in I}x_{i} \:-\: \sum_{\{1,\dots,n+1\}\setminus I}x_{i}\;\ge\; -(n-1).
\end{equation}
Hence $\conv{\graph[\Hnegoo]} = \conv{\{x\in\{-1,1\}^{n+1}\mid eq.~ \eqref{eq:nogood} \ \, \forall I\subseteq \{1,\dots,n+1\}, \text{ odd } \abs{I}   \}}$.  Consider the polytope
\begin{equation}\label{eq:convminusplus}
\P[]^{-1,1} := \{x\in\Hnegoo[n+1]\mid eq.~ \eqref{eq:nogood} \ \, \forall I\subseteq\{1,\dots,n+1\}, \text{ odd } \abs{I}  \},
\end{equation}
which is the LP relaxation of $\conv{\graph[\Hnegoo]}$. By construction, this polytope has the property that $\P[]^{-1,1}\cap\{-1,1\}^{n+1}\subseteq\graph[\Hnegoo]$. We will show in the proof of Theorem~\ref{thm:convminusplus} that the extreme points of $\P[]^{-1,1}$ are in $\{-1,1\}^{n+1}$, thereby implying that $\conv{\graph[\Hnegoo]}=\P[]^{-1,1}$. This equality, along with the following claim that is straightforward to verify, gives us the statement of Theorem~\ref{thm:convminusplus}.

\begin{observation}
After denoting $x_{n+1}=w$, each of the convex hull descriptions in Theorem~\ref{thm:convminusplus} becomes equal to the polytope $\P[]^{-1,1}$.  
\end{observation}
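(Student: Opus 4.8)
The plan is to verify directly that the inequality system cutting out $\P[]^{-1,1}$ in \eqref{eq:convminusplus}, after the substitution $x_{n+1}=w$, matches constraint-for-constraint the system in Theorem~\ref{thm:convminusplus}. Since $\P[]^{-1,1}$ is the intersection of the box $\Hnegoo[n+1]$ with the no-good inequalities \eqref{eq:nogood}, one per odd subset $I'\subseteq\{1,\dots,n+1\}$, and the box constraints agree verbatim once $x_{n+1}=w$, it suffices to place the no-good inequalities in bijection with the inequalities listed in the theorem. I would organize this bijection by splitting the odd subsets $I'$ according to whether or not $n+1\in I'$, since the special index $n+1$ is exactly the one carrying the variable $w$.

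First I would dispose of the subsets with $n+1\in I'$. Writing $I'=I\cup\{n+1\}$ with $I\subseteq\{1,\dots,n\}$, oddness of $|I'|$ forces $|I|$ even, so $I\in\evenset$, and $\{1,\dots,n+1\}\setminus I'=\{1,\dots,n\}\setminus I$. Substituting $x_{n+1}=w$ into \eqref{eq:nogood} then reproduces verbatim the lower-bound inequality $\sum_{i\in I}x_i-\sum_{i\notin I}x_i+w\ge -(n-1)$ indexed by $I\in\evenset$, which is the lower-bound family common to both the $n$-odd and $n$-even descriptions.

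The remaining subsets have $n+1\notin I'$, so $I'=I\subseteq\{1,\dots,n\}$ with $|I|$ odd, i.e.\ $I\in\oddset$, and now $\{1,\dots,n+1\}\setminus I'=(\{1,\dots,n\}\setminus I)\cup\{n+1\}$. The crucial maneuver here is complementation: setting $J:=\{1,\dots,n\}\setminus I$, substituting $x_{n+1}=w$, and multiplying \eqref{eq:nogood} through by $-1$ rearranges the constraint into the upper-bound inequality $\sum_{i\in J}x_i-\sum_{i\notin J}x_i+w\le n-1$. Because $|J|=n-|I|$ with $|I|$ odd, the parity of $J$ is governed by that of $n$: for odd $n$ we get $J\in\evenset$, and for even $n$ we get $J\in\oddset$. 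This is precisely the upper-bound family over $\evenset$ in the odd-$n$ description and over $\oddset$ in the even-$n$ description.

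To conclude, I would note that the two cases are exhaustive and disjoint, so the assignment sending $I'$ to its rewritten constraint is a bijection between the $2^n$ odd subsets of $\{1,\dots,n+1\}$ and the $2^n$ inequalities of each theorem description (counting each two-sided constraint as two); hence no inequality is created or dropped, and the substitution $x_{n+1}=w$ identifies $\P[]^{-1,1}$ with the corresponding description. The argument is essentially bookkeeping, and the only step demanding care is the complementation-with-sign-flip in the second case, where one must simultaneously track the reversal of the inequality and the parity shift of the indexing set induced by passing from $I$ to its complement $J$.
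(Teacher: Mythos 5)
Your proposal is correct, and it carries out exactly the verification the paper leaves implicit (the paper states the Observation as ``straightforward to verify'' without giving details): splitting the odd subsets $I'\subseteq\{1,\dots,n+1\}$ according to whether $n+1\in I'$, reading off the lower-bound family from the subsets containing $n+1$, and obtaining the upper-bound family from the remaining subsets via complementation and a sign flip, with the parity of $n$ deciding whether the complements land in $\evenset$ or $\oddset$. The bookkeeping, including the count of $2^{n}$ inequalities on each side, is accurate, so nothing is missing.
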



\begin{proof}[\textbf{Proof of Theorem~\ref{thm:convminusplus}}]
\renewcommand{\a}{c}
We show that for any $\a\in\real^{n+1}$, the linear program $z^{LP} = \max\{\a^{\top}x \colon x\in\P[]^{-1,1}\}$ has an optimal solution in $\graph[\Hnegoo]\cap\{-1,1\}^{n+1}$. We proceed by \blue{considering cases that are defined using} $A=\{i:\a_i=0\}, B=\{i:\a_i<0\}, C=\{i:\a_i>0\}$. Note two things: (1) $z^{LP} \le \sum_{i\in B\cup C}\abs{\a_{i}}$ due to $x\in\Hnegoo[n+1]$ for every feasible $x$, (2) any $x\in\{-1,1\}^{n+1}$ belongs to $\P[]^{-1,1}$ if and only if $\{i\in\{1,\dots,n+1\}\mid x_{i}=-1 \}$ has even cardinality.

\begin{description}
\item[Case 1: $\abs{B}$ is even.] 
Since $B$ has even cardinality, the point $x^{\ast}$ with $x^{\ast}_{i}=-1$ for $i\in B$ and $x^{\ast}_{i}=1$ for $i\in A\cup C$ belongs to $\P[]^{-1,1}$. This $x^{\ast}$ is optimal to $z^{LP}$ because $\a^{\top}x^{\ast} = \sum_{i\in B\cup C}\abs{\a_{i}}$.

\item[Case 2: $\abs{B}$ is odd and $\abs{A}\ge 1$.]
Choose an arbitrary $j_{0}\in A$ and set $x^{\ast}_{i}= -1$ for $i\in B\cup \{j_0\}$ and $x^{\ast}_{i}=1$ for $i\in (A\setminus\{j_{0}\})\cup C$ . This $x^{\ast}$  belongs to $\P[]^{-1,1}$ because $B\cup \{j_{0}\}$ is even and is optimal to $z^{LP}$ because $\a^{\top}x^{\ast} = \sum_{i\in B\cup C}\abs{\a_{i}}$.

\item[Case 3: $\abs{B}$ is odd and $\abs{A}=0$.]
Let $j_1\in \argmin_{1\leq i\leq n+1}\abs{\a_i}$. There are two subcases. When $j_1\in B$, i.e. $\a_{j_1}<0$, the point $x^\ast_{i}= -1$ for $i\in B\setminus \{j_1\}$ and $x^{\ast}_{i}=1$ for $i\in C\cup\{j_1\}$ is optimal with value $\sum_{i\in B\setminus \{j_1\}}\left(-\blue{\a}_i\right)+\sum_{i\in C\cup\{j_1\}}\blue{\a}_i$, because in this subcase 
\begin{align*}
\a^\top x &=\left(-\a_{j_1}\right)\left(\sum_{i\in C}x_i-\sum_{i\in B}x_i\right)+\sum_{i\in C}\left(\a_i+\a_{j_1}\right)x_i+\sum_{i\in B}\left(\a_i-\a_{j_1}\right)x_i\\
&\leq \left(-\a_{j_1}\right)(n-1)+\sum_{i\in C}\left(\a_i+\a_{j_1}\right)+\sum_{i\in B}\left(\a_{j_1}-\a_i\right)\\
&=\sum_{i\in C}\a_i+\sum_{i\in B}\left(-\a_i\right)+2\a_{j_1}\\
&=\sum_{i\in B\setminus \{j_1\}}\left(-\a_i\right)+\sum_{i\in C\cup\{j_1\}}\a_i,
\end{align*}
where the $\le$ inequality is obtained by applying \eqref{eq:nogood} with $I=B$. When $j_1\in C$, i.e. $\a_{j_1}>0$, then the point $x^\ast_{i}= -1$ for $i\in B\cup \{j_1\}$ and $x^{\ast}_{i}=1$ for $i\in C\setminus\{j_1\}$ is optimal with value $\sum_{i\in B\cup \{j_1\}}\left(-\a_i\right)+\sum_{i\in C\setminus\{j_1\}}\a_i$, because in this subcase 
\begin{align*}
\a^\top x&=\a_{j_1}\left(\sum_{i\in C}x_i-\sum_{i\in B}x_i\right)+\sum_{i\in C}\left(\a_i-\a_{j_1}\right)x_i+\sum_{i\in B}\left(\a_i+\a_{j_1}\right)x_i\\
&\leq \a_{j_1}(n-1)+\sum_{i\in C}\left(\a_i-\a_{j_1}\right)+\sum_{i\in B}\left(-\a_i-\a_{j_1}\right)\\
&=\sum_{i\in C}\a_i+\sum_{i\in B}\left(-\a_i\right)-2\a_{j_1}\\
&=\sum_{i\in B\cup \{j_1\}}\left(-\a_i\right)+\sum_{i\in C\setminus\{j_1\}}\a_i.
\end{align*}
\end{description}
This completes our proof for showing that $\P[]^{-1,1}$ has extreme points in $\{-1,1\}^{n+1}$.
\end{proof}

A scaling argument yields $\conv{\graph}$ when $\L_{i}=-\U_{i}$ for all $i$.

\newcommand{\y}{x^{\prime}}
\newcommand{\w}{w^{\prime}}
\subsubsection{Errors}
In order to prove Theorem~\ref{thm:converr11}, we make use of the reflection symmetry in the sets $\graph[\Hnegoo]$ and $\conv{\graph[\Hnegoo]}$, as described next. Let $\sign(\cdot)$ denote the sign of a scalar, with $\sign(0)$ considered positive. A point $(x,w)\in\Hnegoo[n+1]$ is said to have compatible signs if $\sign(w) = \sign(\multilin[x])$, i.e., $\sign(w)$ is negative if and only if $x$ has no zero entries and has an odd number of negative entries.  Define the following binary relation on $\Hnegoo[n+1]$: $(x,w) \sim (\y,\w)$ if (i) $|\w|=|w|$ and $|x_{j}| = |\y_{j}|$ for all $j$, and (ii) both $(x,w)$ and $(\y,\w)$ have compatible signs or both $(x,w)$ and $(\y,\w)$ do not have compatible signs.  Thus $(x,w)\sim(\y,\w)$ if and only if $\y$ is obtained from $x$ by reversing signs on odd (even) many entries of $x$ and setting $\w=-w$ ($\w=w$). This binary relation has two important properties. 
\begin{enumerate}
\item It preserves the error measure $h(x,w) := \abs{w - \multilin}$. Indeed, one can easily argue that $h(x,w) = h(\y,\w)$ if $(x,w)\sim(\y,\w)$. 
\item It is an equivalence relation, i.e., a reflexive symmetric transitive relation. This is obvious by construction of $\sim$.
\end{enumerate}
Now consider $[(x,w)] := \{(\y,\w)\in\Hnegoo\mid (x,w)\sim (\y,\w) \}$, the equivalence class of $(x,w)$ induced by $\sim$. Since $\sim$ is an equivalence relation on $\Hnegoo[n+1]$ and $\graph[\Hnegoo]$ and $\conv{\graph[\Hnegoo]}$ are subsets of $\Hnegoo[n+1]$, each of these sets is partitioned by $\sim$. Observe that the definition of $\sim$ means that for every $(\y,\w)\in\Hnegoo[n+1]$ having compatible (incompatible) signs, there exists $(x,w)\in\Hnegoo[n+1]$ such that $(\y,\w)\sim (x,w)$ and $(x,w)\ge\zerovec$ ($x\ge\zerovec, w < 0$). Now, because every point in $\graph[\Hnegoo]$ has compatible signs and $(x,w)\in\graph[\Hnegoo]$ trivially implies $[(x,w)]\subset\graph[\Hnegoo]$, we have 
\begin{subequations}
\begin{equation}
\graph[\Hnegoo] = \bigcup\,\left\{[(x,w)]\colon (x,w)\in\graph[\Hnegoo], (x,w)\ge\zerovec \right\}. 
\end{equation}
To make a similar statement for $\conv{\graph[\Hnegoo]}$, we need a small modification because the convex hull contains points with both compatible and incompatible signs. In particular, we must drop the nonnegativity requirement on $w$. Also, if $(x,w)\in\conv{\graph[\Hnegoo]}$, then using the fact that $(x,w)$ is a convex combination of points in $\graph[\Hnegoo]$, all of which have compatible signs, we get that $[(x,w)]\subset\conv{\graph[\Hnegoo]}$. Thus we have the following:
\begin{equation}\label{eq:sym11a}
\conv{\graph[\Hnegoo]} = \bigcup\,\left\{[(x,w)]\colon (x,w)\in\conv{\graph[\Hnegoo]}, x\ge\zerovec \right\}
\end{equation}
Now, the fact that $\sim$ is error-preserving leads to
\begin{equation}\label{eq:sym11b}
\err{\conv{\graph[\Hnegoo]}} = \max\left\{\,\abs{w-\multilin} \colon (x,w)\in \conv{\graph[\Hnegoo]}, x \ge \zerovec\right\},
\end{equation}
meaning that we only need to consider nonnegative values of $x$ when computing the convex hull error.
\end{subequations}

\begin{proof}[\textbf{Proof of Theorem~\ref{thm:converr11}}]
To upper bound the convex hull error. We only present arguments for when $n$ is odd, since the even case is almost exactly the same due to similar characterizations of the convex hulls in Theorem~\ref{thm:convminusplus}. By equation~\eqref{eq:sym11b}, we consider only $(x,w)\in\conv{\graph[\Hnegoo]}$ with $x\ge\zerovec$. Thus, $\err{\conv{\graph[\Hnegoo]}}$ is equal to the maximum of the maximum errors of $\cvxenv{\f[m]}{\Hnegoo}(x)$ and $\concenv{\f[m]}{\Hnegoo}(x)$ calculated over $\Hzone$.
\[ 
\begin{split}
\multilin \,-\, \cvxenv{\f[m]}{\Hnegoo}(x) &\;=\; \multilin \,-\, \max\left\{-(n-1)\,+\,\max_{I\in\evenset}\sum_{j\notin I}x_{j} - \sum_{j\in I}x_{j} ,\, -1 \right\} \\
&\;=\; \multilin \,-\, \max\left\{\sum_{j=1}^{n}x_{j} - (n-1),\, -1 \right\}\\
&\;=\; \min\left\{\multilin - \sum_{j=1}^{n}x_{j} + (n-1)  ,\, \multilin + 1 \right\} \\
&\;\le\; \min\left\{\multilin - n\sqrt[\leftroot{-1}\uproot{10}n]{\multilin} + (n-1)  ,\, \multilin + 1 \right\}, \\
\end{split}
\] where $x\ge\zerovec$ has given us the second equality, and the inequality in the last step  from applying the arithmetic-geometric means inequality. Therefore, after regarding $\sqrt[n]{\multilin}$ as a scalar variable $t$, we get $\max_{t\in[0,1]}\min\varphi(t)$ to be an upper bound on the convex envelope error, where $\varphi(t)= \min\{t^{n} - nt + n-1 , t^{n}+1\}$. The function $t^{n} - nt + n-1$ is convex \blue{decreasing} on $[0,1]$ whereas $t^{n}+1$ is convex \blue{increasing} on $[0,1]$, and hence the maximum value of $\varphi$ on $[0,1]$ occurs at a breakpoint where the two functions have equal value. Solving for $t^{n} - nt + n-1 = t^{n}+1$ yields $t = 1 - 2/n$, and so the upper bound is $1 + (1 - 2/n)^{n}$. This bound is tight since it is attained at $x=(1-2/n)\onevec$ where $\cvxenv{\f[m]}{\Hnegoo}((1-2/n)\onevec)=-1$. On the concave side, we have $\concenv{\f[m]}{\Hnegoo}(x) \le 1$ and since $\multilin\ge 0$ for $x\ge\zerovec$, the concave envelope error is upper bounded by $1$. Thus, $\err{\conv{\graph[\Hnegoo]}} = \max\{1 + (1 - 2/n)^{n} , 1\} = 1 + (1-2/n)^{n}$.

To find the points where this bound is attained, we already observed the point $(x,w) = ((1-2/n)\onevec,-1)$. Since our relation $\sim$ is error-preserving, all points in the equivalence class of $((1-2/n)\onevec,-1)$ have the same error, and there are $2^{n}$ many such points. Finally, note that for any point $(\y,1)\in[((1-2/n)\onevec,-1)]$, the above bounds on the envelopes would be reversed so that both the envelopes have the same maximum error over the entire $\Hnegoo$ box.  
\end{proof}

%

\section*{Acknowledgements}
The first author was supported in part by ONR grant N00014-16-1-2168. The second author was supported in part by ONR grant N00014-16-1-2725. \blue{We thank two referees whose meticulous reading helped us clarify some of the technical details.}

\begin{appendices}

\section{Missing Proofs}\label{sec:app1}
\begin{proof}[\textbf{Proof of Proposition~\ref{prop:sigmad}}]
Since $\S\subseteq\Hzone$ and $\beta\ge\onevec$ make $\monom\le\min_{j}x_{j}<\beta^{\top}\bx$ for all $\bx\in\S$, we have $\sigma(\beta) < \degree[\beta]$. The lower bound of 0 comes from 
\[
\sigma(\beta) \;\ge\; \degree[\beta] + \min_{\bx\in\S}\monom - \max_{\bx\in\S} \beta^{\top}\bx \;\ge\; \degree[\beta] + 0 - \degree[\beta] = 0. 
\] 
%
If $\S=\Hzone$, then $\onevec\in\S$ implies that $\min_{\bx\in\S}\monom - \beta^{\top}\bx \le 1 - \degree[\beta]$ and so by \eqref{eq:sigma}, we have $\sigma(\beta)\le 1$. For the fourth claim we have $\Delta^{\zerovec}_{n}\cap\Delta^{\onevec}_{n}=\{x\in\Hzone\mid \sum_{i}x_{i}=n-1\} = \conv{\{\onevec-\onevec[1],\ldots,\onevec-\onevec[n]\}}$. Denote this simplex by $\Delta^{\onevec}_{n-1}$. The assumption $\Delta^{\onevec}_{n-1}\subseteq\S$ means that $\onevec-\onevec[i]\in\S$ for all $i$. Substituting this point into \eqref{eq:sigma} gives us $\sigma(\beta) \le \sum_{j=1}^{n}\beta_{j} + 0 - \sum_{j\neq i}\beta_{j} = \beta_{i}$ for all $i$. This leads to $\sigma(\beta)\le\min_{j}\beta_{j}$. Since  $\Delta^{\onevec}_{n-1}\subseteq\S\subseteq\Delta^{\zerovec}_{n}$, $\max_{x \in \Delta^{\onevec}_{n-1}}\beta^{\top} x \le\max_{x\in\S} \beta^{\top} x \le \max_{x \in \Delta^{\zerovec}_{n}}\beta^{\top} x$. Note that $\Delta^{\zerovec}_{n} = \conv{(\{x\in\{0,1\}^{n}\mid \sum_{i}x_{i}\le n-2 \}\cup \Delta^{\onevec}_{n-1})}$. The positivity of $\beta$ then makes it clear that $\max_{x\in\Delta^{\zerovec}_{n}}\beta^{\top} x = \max_{x \in \Delta^{\onevec}_{n-1}}\beta^{\top} x$. Hence $\max_{x\in\S} \beta^{\top} x = \max_{x \in \Delta^{\onevec}_{n-1}}\beta^{\top} x = \sum_{j=1}^{n-1}\beta_{(j)}$, where $\beta_{(1)} \ge \beta_{(2)} \ge \cdots \ge \beta_{(n)}$. Now,
\[
\sigma(\beta) \;\ge\; \sum_{j=1}^{n}\beta_{j} \;+\; \min_{x\in\S}\,\monom \; - \; \max_{x\in\S} \beta^{\top} x \;\ge\; \sum_{j=1}^{n}\beta_{j} \;+\; 0 \;-\; \sum_{j=1}^{n-1}\beta_{(j)}
\;=\; \beta_{(n)} \;=\; \min_{j}\beta_{j}.
\]
Since we have already argued $\sigma(\beta)\le\min_{j}\beta_{j}$, it follows that $\sigma(\beta)=\min_{j}\beta_{j}$.
\end{proof}

\begin{proof}[\textbf{Proof of Lemma~\ref{lem:exp1}}]
\renewcommand{\chi}{\sigma}
\renewcommand{\beta}{\lambda}
\renewcommand{\xi}{t}
For nontriviality, assume $\beta_{1}>1$. 

(1) The first derivative is $\phi^{\prime}(\chi) = -\beta_{1}(1-\chi)^{\beta_{1}-1} + \beta_{2}$. If $\beta_{2}\ge\beta_{1}$, then $\phi^{\prime}(0) \ge 0$ and $\phi^{\prime}(\chi) > 0$ for all $\chi \in (0, 1]$ and hence $\phi$ is strictly increasing over $(0,1)$ and $\phi(\chi) > \phi(0) = 0$ for all $\chi\in(0,1]$. 

(2 \& 3) Now assume $1\le\beta_{2} < \beta_{1}$.  Set $\tilde{\chi} = 1-(\beta_{2}/\beta_{1})^{\frac{1}{\beta_{1}-1}}$ and realize that $\phi^{\prime}(\tilde{\chi})=0$ and $\tilde{\chi}\in(0,1)$. Then we have $\phi^{\prime}(\chi) < 0$ for $\chi \in (0, \tilde{\chi})$. Therefore $\phi$ is decreasing on $(0, \tilde{\chi}]$, which implies $\phi(\chi)< \phi(0)=0$ for $\chi \in (0, \tilde{\chi}]$. Hence $\phi(\tilde{\chi})<0$.  The construction of $\tilde{\chi}$ also implies $\phi^{\prime}(\chi) > 0$, and hence $\phi$ is increasing, for $\chi \in ( \tilde{\chi},1]$. Since $\phi(1)\ge0$, it follows that there is a unique real number $\chi^{\ast}$ in $(\tilde{\chi},1]$ such that $\phi(\chi^{\ast})=0$. Thus we have $\phi(\chi)\le 0$ for $\chi\in[0,\chi^{\ast}]$ and $\phi(\chi)>0$ for $\chi\in(\chi^{\ast},1]$. If $\beta_{1}$ is odd, the other root is obtained by applying Descartes' rule of signs as in the first claim.

(4) Take $\beta\in(\beta_{2},\infty)$ and define $g(\chi) := (1-\chi)^{\beta_{1}} + \beta\chi - 1$. If $\beta\ge\beta_{1}$, then the first claim in this lemma, with $\beta_{2}$ replaced by $\beta$, gives us $g(\chi)\ge0$. Now assume $\beta<\beta_{1}$. Applying the second claim in this lemma, after replacing $\beta_{2}$ with $\beta$, tells us there is a unique real $\chi^{\ast\ast}$ that is a root of $g$ in $(0,1]$. Now $g(\chi^{\ast})=\phi(\chi^{\ast})+(\beta-\beta_{2})\chi^{\ast}>0$ because $\phi(\chi^{\ast})=0,\beta>\beta_{2},\chi^{\ast}>0$. Then the third claim in this lemma, with $\beta_{2}$ replaced by $\beta$, gives us $\chi^{\ast}>\chi^{\ast\ast}$ and consequently, the proposed fourth claim.

For the final part, note that the roots of $\phi$ and its complemented polynomial $\phi^{\prime}(\xi) := \xi^{\beta_{1}} -\beta_{2}\xi+\beta_{2}-1$ are in bijection under the relation $\sigma=1-\xi$. Descartes' rule of signs tells us that $\phi^{\prime}$ has exactly one positive root besides $\xi=1$. When $\beta_{2}>\beta_{1}$, this root must be in $(1,\infty)$ because otherwise we would get a contradiction to $\phi$ not having any roots in $(0,1]$. Descartes' rule also tells us there is exactly one negative root when $\beta_{1}$ is odd. This translates to $\phi$ having a root in $(1,\infty)$ if and only if $\beta_{1}$ is odd.
\end{proof}

\begin{proof}[\textbf{Proof of Proposition~\ref{prop:D}}]
Note that $\psi(0)=\psi(1)=0$. We first claim that $\psi$ is strictly increasing on $(0,t^{\ast})$. In fact, we argue the stronger claim that $\psi(t) > 0$ for all $t\in(0,1)$. This claim is equivalent to showing that $(\frac{1 + (r-1)t}{r^{t}})^{n} > 1$, which is equivalent to $r^{t} - (r-1)t - 1 < 0$. The function $t \mapsto r^{t} - (r-1)t - 1$ is convex and is zero-valued at $t=0$ and $t=1$. Therefore, by convexity, $r^{t} - (r-1)t - 1 < 0$ for all $t\in(0,1)$, and hence, we have $\psi(t) > 0$ for all $t\in(0,1)$.

Since $\D \le \max_{t\in[0,1]} \psi(t)$, $\psi$ is strictly increasing on $(0,t^{\ast})$, \blue{and $\psi(1)=0$}, the condition $t^{\ast}\ge (n-1)/n$ implies that $i=(n-1)$ yields the maximum value in the formula for $\D$. Now suppose $\blue{t^{\ast\ast}} \le (n-1)/n$. Since $t^{\ast\ast}$ is a stationary point, $(1+\blue{t^{\ast\ast}}(r-1))^{n-1}=r^{n\blue{t^{\ast\ast}}}\frac{\ln r}{r-1}$. Now,
\[
\begin{split}
0 \;<\; \D \;\le\; (1+\blue{t^{\ast\ast}}(r-1))^n-r^{n\blue{t^{\ast\ast}}} \;=\; r^{\frac{n^2\blue{t^{\ast\ast}}}{n-1}}\left(\frac{\ln r}{r-1}\right)^{\frac{n}{n-1}} -r^{n\blue{t^{\ast\ast}}} &\;=\; r^{n\blue{t^{\ast\ast}}}\left(r^{\frac{n\blue{t^{\ast\ast}}}{n-1}}\left(\frac{\ln r}{r-1}\right)^{\frac{n}{n-1}}-1\right) \\
&\le r^{n-1}\left(r \left(\frac{\ln r}{r-1}\right)^{\frac{n}{n-1}}-1\right),
\end{split}
\] where the last inequality uses $n\blue{t^{\ast\ast}} \le n-1$ and $r>1$. \blue{Finally, if $t^{\ast} < (n-1)/n < t^{\ast\ast}$, since $t^{\ast\ast}$ can be arbitrarily close to $1$, we can only bound $r^{nt^{\ast\ast}}$ and $r^{\frac{nt^{\ast\ast}}{n-1}}$ in above by $r^{n}$ and $r^{\frac{n}{n-1}}$, respectively, to obtain the last proposed bound on $\D$.}
\end{proof}

\end{appendices}

\printbibliography

\end{document}